\newcommand{\bqq}{\begin{equation}}
\newcommand{\eqq}{\end{equation}}
\newcommand{\bqs}{\begin{equation*}}
\newcommand{\eqs}{\end{equation*}}
\newcommand{\R}{\mathbb{R}}
\newcommand{\cL}{\mathcal{L}}
\newtheorem{lem}{Lemma}[section]
\newtheorem{thm}{Theorem}
\newtheorem{prop}[lem]{Proposition}
\newtheorem{rmk}[lem]{Remark}
\numberwithin{equation}{section}
\author[1]{Gr\'egory Faye\footnote{ \texttt{gregory.faye@math.univ-toulouse.fr}}}
\author[2]{Thomas Giletti}
\author[3]{Matt Holzer}
\affil[1]{\small Institut de Math\'ematiques de Toulouse ; UMR 5219, Universit\'e de Toulouse ; UPS IMT, F-31062 Toulouse Cedex 9, France}
\affil[2]{\small IECL ; UMR 7502, University of Lorraine ; B.P. 70239, F-54506 Vandoeuvre-l\`es-Nancy Cedex, France}
\affil[3]{\small Department of Mathematical Sciences and Center for Mathematics and Artificial Intelligence (CMAI), George Mason University, Fairfax, VA, USA}
\begin{document}
\title{Asymptotic spreading for Fisher-KPP reaction-diffusion equations with heterogeneous shifting diffusivity}
\maketitle

\begin{abstract}
We determine the asymptotic spreading speed of the solutions of a Fisher-KPP reaction-diffusion equation, starting from compactly supported initial data, when the diffusion coefficient is a fixed bounded monotone profile that is shifted at a given forcing speed and satisfies a general uniform ellipticity condition. Depending on the monotony of the profile, we are able to characterize this spreading speed as a function of the forcing speed and the two linear spreading speeds associated to the asymptotic problems. Most notably, when the profile of the coefficient diffusion is increasing we show that there is an intermediate range for the forcing speed where spreading actually occurs at a speed which is larger than the linear speed associated with the homogeneous state around the position of the front.
We complement our study with the construction of strictly monotone traveling front solutions with strong exponential decay near the unstable state when the profile of the coefficient diffusion is decreasing and in the regime where the forcing speed is precisely the selected spreading speed.
\end{abstract}

{\noindent \bf Keywords:} reaction-diffusion, spreading speed, heterogeneous diffusion, traveling waves. \\

{\noindent \bf MSC numbers:} 35K57, 35B40, 35K45, 35C07.\\

\section{Introduction}

In this work we aim at understanding the influence of a heterogeneous diffusion on the spreading speed of solutions of a scalar reaction-diffusion equation of Fisher-KPP type \cite{fisher,KPP}. More precisely, we consider the situation of a shifting environment, i.e. the diffusion coefficient depends on a shifting variable $x - c_{het} t$ where the parameter $c_{het} \in \R$. The equation we consider writes as follows:
\begin{equation}\label{eq:main}
\partial_t u =  \chi ( x - c_{het} t) \partial_x^2 u + \alpha u (1-u), \quad t > 0, \ x \in \R,
\end{equation}
where $\alpha>0$. Throughout this paper we assume that the function $\chi: \R \mapsto \R$ is positive and smooth; further assumptions will be added below.

Such a reaction-diffusion equation is often used in ecology and population dynamics to model spatial propagation phenomena \cite{AW78}. In this context, the unknown function $u$ denotes the density of some species, and the parameter $\alpha$ stands for its per capita growth rate. The function $\chi$ stands for the motility of individuals, and therefore the heterogeneity means that the motility depends on local environmental conditions. In the past decade, there has been several series of works dedicated to the study of a variation of~\eqref{eq:main} with constant motility (say~$\chi \equiv 1$ to fix the ideas) but with shifting growth rates, that typically reads
\bqs
\partial_t u =  \partial_x^2 u+ u(r(x-c_{het}t)-u),
\eqs
where the function $r$ is not necessarily monotone and can take negative values \cite{BR08,berestycki2009can,bouhours2019spreading,li14,BF18}. One of the main motivations behind the introduction of a shifting growth rate came from modeling climate change and its impact on the survival of species \cite{berestycki2009can,cosner}. Some spreading-vanishing dichotomy properties were typically found, depending on either the size of a favorable patch~\cite{berestycki2009can} or of the initial population~\cite{bouhours2019spreading}. 

\medskip

Here our choice of a shifting heterogeneity (i.e. depending only on a shifting variable $x - c_{het} t$) comes from systems coupling several species densities. Most of the mathematical literature on spreading speeds for systems has been concerned with situations where these species only interact through their growth rates, e.g. prey-predator and competition systems \cite{dgm_sys,GL19,HS14,lam21}. Yet we would like to understand the influence that such inter-species interactions may play on the respective motilities of each species. For instance, it seems natural that the motility of predators should be higher when the density of the prey is low, while the motility of preys should be higher when the density of predators is high. 

To illustrate this, let us consider the reaction-diffusion system,
\begin{equation}\label{eq:sys0}
\left\{\begin{array}{l}
\partial_t u =  d(v) \partial_x^2 u + \alpha u (1-u),\vspace{3pt}\\
\partial_t v = \partial_x^2 v + \beta v (1-v),
\end{array}
\right.
\end{equation}
where $\beta >0$ and $d : \R^+ \mapsto \R$ is a positive function. Here $u$ and $v$ denote two distinct species, which are coupled through the motility of individuals of the former species. For simplicity we have excluded other coupling terms, and in particular there are no predation nor competition terms in the growth rates; this allows us to focus purely on the cross-diffusion phenomenon. As suggested above, the function $d$ may be taken as a monotonic function depending on whichever species acts as the predator. More precisely, if $u$ is the prey then the motility function $d$ should be increasing as the number of predators $v$ grows; conversely, if $u$ is the predator then its motility $d$ should be decreasing as the number of preys $v$ grows. Other forms of cross-diffusion have been proposed and studied in the literature, typically when the diffusion term is either of the form $\partial_x^2(d(v)u)$ or $\partial_x(d(v)\partial_xu)$ \cite{SKT,lewis,gatenby1996reaction}. While such systems have been considered for instance in the context of chemotaxis, little is known from the
point of view of propagation phenomena, apart from perturbative or singular limit results that reduce the system to a weakly coupled system which allow the construction of traveling front solutions (see for example \cite{gallay} in the context of cancer modeling). 

It is well-known that, for large classes of initial data, the solution of the Fisher-KPP equation
$$\partial_t v = \partial_x^2 v + \beta v (1-v),$$
converges to a traveling front solution, i.e. an entire in time solution $v(t,x) = V(x-ct)$ where $V$ is a decreasing function and satisfies
$$V(-\infty) = 1 \ , \quad V (+\infty) = 0.$$
More precisely, such a traveling front solution exists if and only if $c \geq 2 \sqrt{\beta}$, and the traveling front with minimal speed is typically the most biologically meaningful because it is attractive with respect to compactly supported initial conditions (up to some drift phenomenon) \cite{fisher,KPP,AW78}. Due to the unilateral coupling in system~\eqref{eq:sys0}, it is thus natural to assume that $v$ is identical to the traveling front with minimal speed, and then the equation for $u$ reduces to \eqref{eq:main} with $c_{het} = 2 \sqrt{\beta}$. In other words, the shifting heterogeneity arises as a result of the propagation of another surrounding species, which individuals of the species $u$ may either chase or flee from.
\begin{rmk}
From the above discussion, it may seem natural to consider a slightly more general heterogeneous diffusion $\widetilde{\chi} (t,x)$, which converges as $t \to +\infty$ to $\chi (x - c_{het} t)$, or even to $\chi (x - c_{het} t + m(t))$ where $m (t) = o (t)$ as $t\to +\infty$ accounts for the drift phenomenon. One may check that most of our analysis applies to such situations with straightforward adaptations. We choose to only consider \eqref{eq:main} since it makes the presentation simpler and captures the same phenomena.
\end{rmk}

\begin{figure}[!t]
\centering
\includegraphics[width=0.4\textwidth]{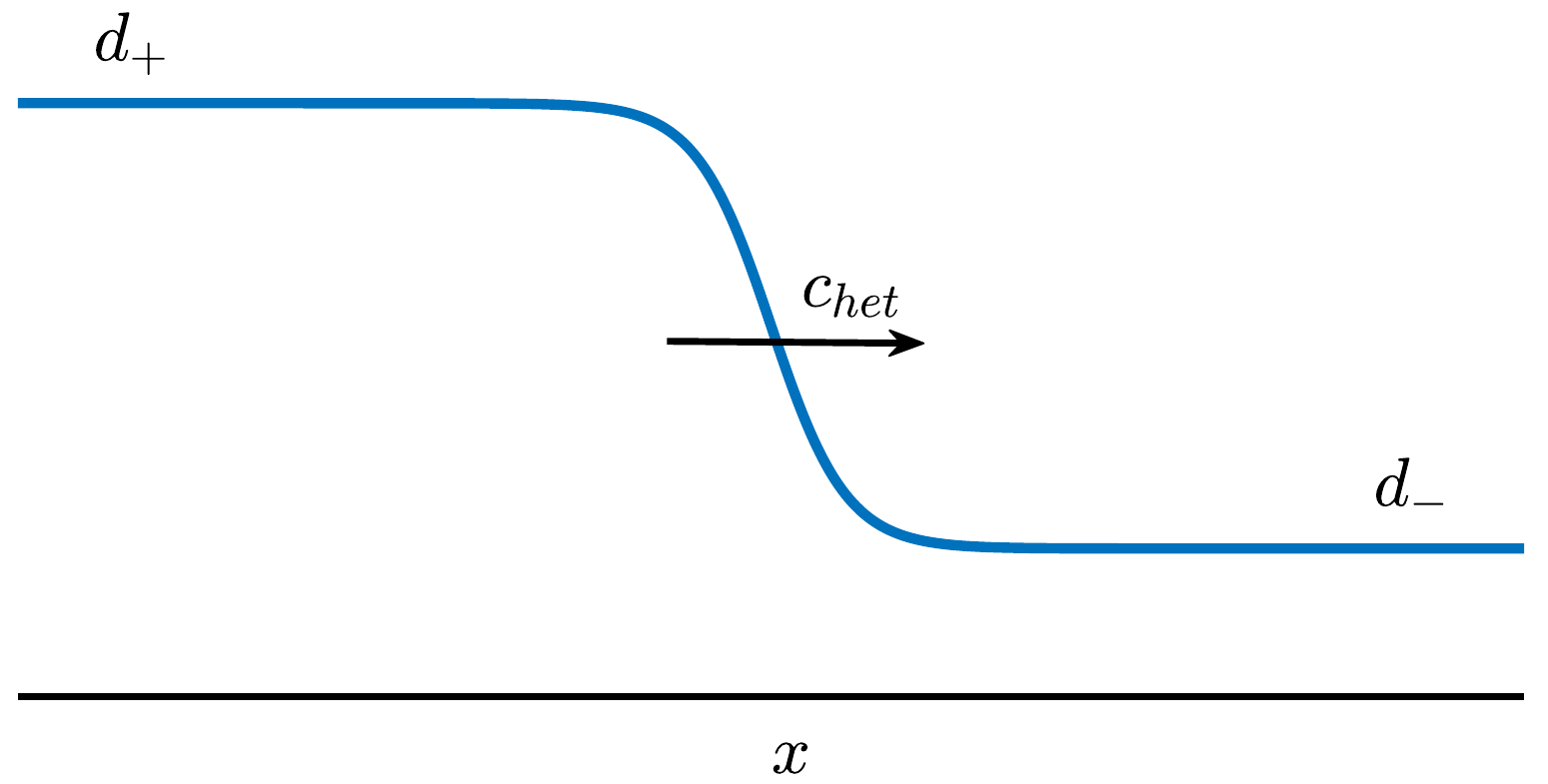}\hspace{1.5cm}
\includegraphics[width=0.4\textwidth]{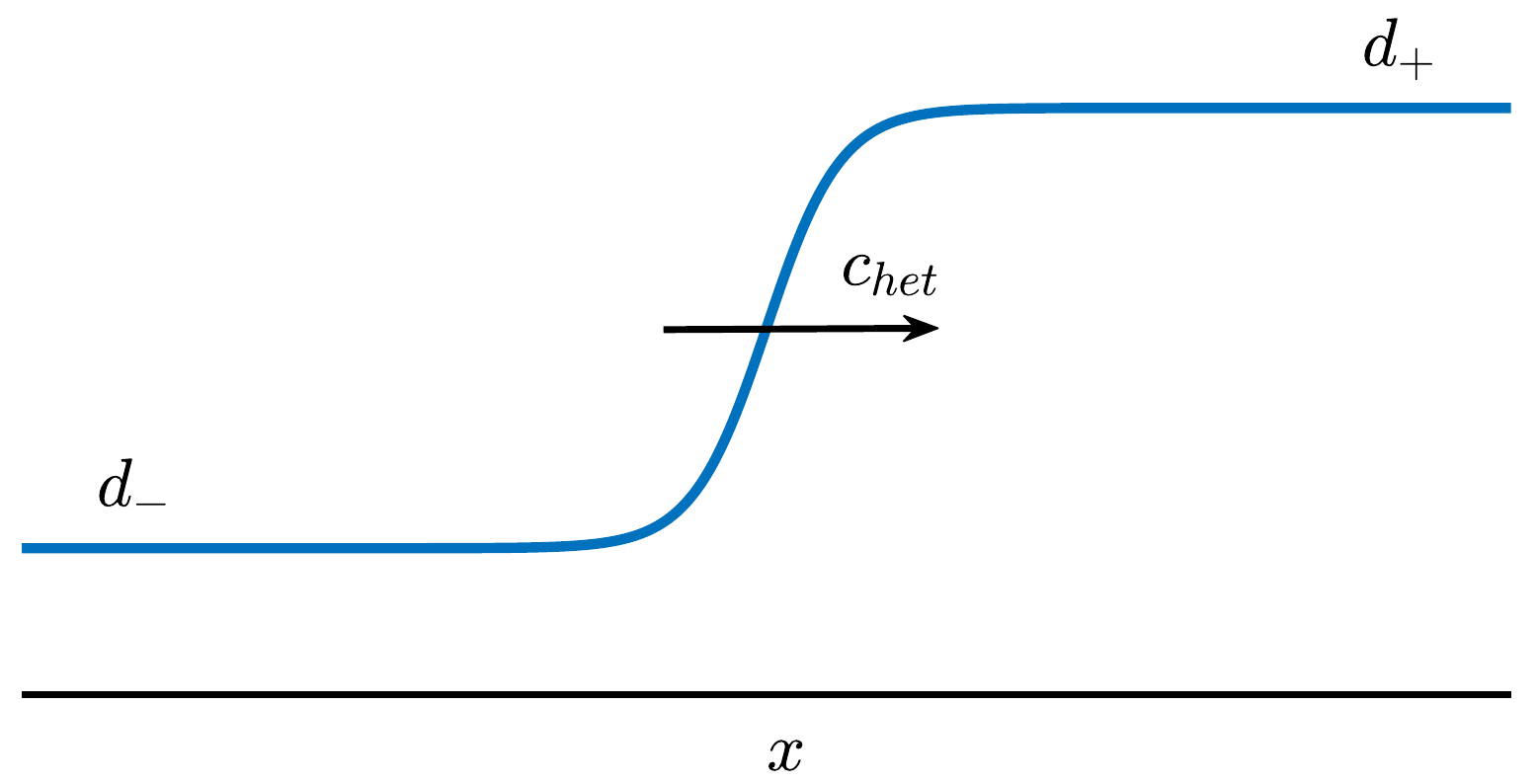}
\caption{Illustration of $\chi$ in cases~\eqref{ass:chi_dec} (left) and \eqref{ass:chi_inc} (right).}
\label{fig:Chi}
\end{figure}

Following this motivation, we will study the large-time behavior of solutions of~\eqref{eq:main} with $c_{het} \geq 0$ and under either assumptions
\begin{equation}\tag{I}\label{ass:chi_dec}
 \left\{\begin{array}{l}
\chi >0, \ \  \chi ' \leq 0, \vspace{3pt}\\
\chi (-\infty) = d_+ , \ \ \chi (+\infty) = d_-,\\
\end{array}
\right.
\end{equation}
or
\begin{equation}\tag{II}\label{ass:chi_inc}
 \left\{\begin{array}{l}
\chi >0, \ \  \chi ' \geq  0, \vspace{3pt}\\
\chi (-\infty) = d_- , \ \ \chi (+\infty) = d_+ ,\\
\end{array}
\right.
\end{equation}
where 
$$0 < d_- < d_+ < +\infty.$$

Let us point out that, equivalently, one may fix the function $\chi$ satisfying either assumption, allow $c_{het}$ to vary on the whole real line $\R$ and investigate spreading in both directions. 

Finally, equation~\eqref{eq:main} will be supplemented together with some initial condition $u_0$ such that
$$0 \leq u_0 \leq 1 \mbox{ and it is nontrivial and compactly supported}.$$

\section{Main results}

In the first case when \eqref{ass:chi_dec} holds, then the `favorable' part of the environment where individuals diffuse more is growing with speed $c_{het}$, and therefore one may expect that the solution will be spreading with positive speed. On the other hand, in the second case when \eqref{ass:chi_inc} holds, then the `favorable' part of the environment is actually receding and a key point will be whether the solution is able to `keep up' with the shifting speed $c_{het}$ of the heterogeneity.

This leads us to introduce, before we state our main results, the speeds
$$c_+ := 2 \sqrt{d_+ \alpha}, \quad c_- := 2 \sqrt{d_- \alpha},$$
which correspond respectively to the speeds of the homogeneous Fisher-KPP equation
$$\partial_t u = d \partial_x^2 u + \alpha u (1-u),$$
with either $d= d_+$ or $d=d_-$. In other words, these are the speeds respectively in the `favorable' environment where the motility function $\chi$ is at its maximal value, and in the `unfavorable' environment where $\chi$ is at its minimal value.. The large time behavior of solutions of \eqref{eq:main} will largely depend on how the shifting speed $c_{het}$ of the heterogeneity compares with the values of $c_-$ and $c_+$.

\subsection{Case \eqref{ass:chi_dec}}

We first consider the situation when the diffusivity is `high' (resp. `low') behind (resp. beyond) the moving frame with speed $c_{het}$. In this case, we will prove the following result:
\begin{thm}\label{theo:chi_dec}
Assume that $\chi$ satisfies case \eqref{ass:chi_dec}. Consider a compactly supported and nontrivial initial datum $u_0$ such that $0 \leq u_0 \leq 1$. Then the solution of \eqref{eq:main} spreads to the right with some positive speed $c^*_u$ in the sense that
$$\forall 0 < c < c^*_u, \quad \limsup_{t \to +\infty} \ \sup_{0 \leq x \leq ct} |1 - u(t,x) | =0,$$
$$\forall c > c^*_u, \quad \limsup_{t \to +\infty} \ \sup_{ x \geq ct} u(t,x) =0.$$
Moreover, we have either:
$$c^*_u = c_-  \quad  \mbox{ if } \ \  c_{het}<c_-,$$
or
$$c^*_u = c_{het} \quad \mbox{ if } \ \ c_- \leq c_{het} \leq c_+,$$
or
$$c^*_u = c_+ \quad  \mbox{ if } \ \  c_+ < c_{het}.$$
\end{thm}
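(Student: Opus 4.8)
The plan is to establish the two bounds $c^*_u \le \min(c_+, \max(c_-, c_{het}))$ and $c^*_u \ge \min(c_+, \max(c_-, c_{het}))$ separately, noting that the right-hand side equals $c_-$, $c_{het}$, $c_+$ in the three regimes of the statement; everything rests on the parabolic comparison principle together with $\alpha u (1-u) \le \alpha u$. For the upper bound I would use exponential supersolutions $\bar u (t,x) = \min(1, e^{-\lambda (x - ct - x_0)})$. Because $\partial_x^2 e^{-\lambda (\cdot)} = \lambda^2 e^{-\lambda (\cdot)} > 0$ has a fixed sign, any pointwise bound $\chi \le D$ reduces the linear supersolution inequality to $\lambda c - D \lambda^2 - \alpha \ge 0$. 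Taking $D = d_+$ globally produces, for every $c > c_+$, a supersolution lying above the compactly supported datum, so $c^*_u \le c_+$. For the sharper bound, observe that when $c > \max(c_-, c_{het})$ the exponential part of $\bar u$ is supported in $\{ x \ge ct + x_0 \}$, where $x - c_{het} t \ge x_0$; since $\chi$ decreases to $d_-$, choosing $x_0$ large gives $\chi \le d_- + \delta$ there, and $c > c_-$ allows a choice of $\lambda$ with $\lambda c - (d_- + \delta) \lambda^2 - \alpha > 0$. This yields $c^*_u \le \max(c_-, c_{het})$, and the two bounds combine to the claimed value.

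For the lower bound the basic building block is a compactly supported moving subsolution $\delta\, \varphi (x - ct)$, where $\varphi > 0$ is the principal Dirichlet eigenfunction of $d\, \partial_\xi^2 + c\, \partial_\xi + \alpha$ on a large interval, whose principal eigenvalue $\mu$ is positive precisely when $c < 2\sqrt{d \alpha}$. The difficulty is that $\varphi''$ changes sign, so one cannot compare the heterogeneous coefficient with the constant $d$ directly: the discrepancy in the subsolution inequality is $\delta (d - \chi) \varphi''$, which has the wrong sign where $\varphi$ is concave. I would absorb this term into the eigenvalue slack $-\delta \mu \varphi$, using the elementary bound $|\varphi''| \le C \varphi$, provided the bump is localized to a region where $|\chi - d| \le \mu / C$. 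With $d = d_-$ this forces the bump to sit ahead of the interface $x = c_{het} t$, and with $d = d_+ - \delta'$ it forces the bump deep behind the interface; in either far region $\chi$ is within $\delta$ of the target constant once $t$ is large.

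These constraints produce the three regimes. If $c_{het} > c_+$ the front lags the shift: deep behind the interface $\chi \approx d_+$, the solution has already saturated near the origin, and $(d_+ - \delta')$-bumps of speed $c < c_+ < c_{het}$ remain in that region, so a bootstrap raises the spreading speed up to $c_+$. If $c_- \le c_{het} \le c_+$ I would prove a keeping-up lemma: in the shift frame $\xi = x - c_{het} t$ the operator $\chi (\xi) \partial_\xi^2 + c_{het} \partial_\xi + \alpha$ has, on a large interval behind the interface, a positive principal Dirichlet eigenvalue, because there $\chi$ is close to $d_+$ and $c_{het} < c_+$; the associated positive eigenfunction gives a stationary bump locked to the shift, hence $c^*_u \ge c_{het}$. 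If $c_{het} < c_-$ the same lemma drives the front up to the interface, and since now $c_- > c_{het}$, the $d_-$-bumps seeded just ahead of the interface propagate at speeds up to $c_-$ and outrun the shift, giving $c^*_u \ge c_-$.

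I expect the hard part to be exactly this keeping-up/locking mechanism and the crossover at the interface, where $\chi$ takes values strictly between $d_-$ and $d_+$ so that no constant-coefficient comparison applies; handling it requires the genuine profile through the shift-frame eigenvalue problem, and this is also the step that forces the distinction between $c_{het}$ and $c_+$. The remaining work is more routine but must be done carefully: seeding each bump from a region already invaded and bootstrapping finitely many speeds up to the target, balancing the interval width (to keep $\mu > 0$) against the localization requirement $|\chi - d| \le \mu / C$, and finally upgrading the persistent lower bound $\liminf u > 0$ on $[0, ct]$ to the uniform convergence $u \to 1$ demanded by the statement, which follows from the monostable reaction and interior parabolic estimates.
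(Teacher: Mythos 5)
Your overall architecture matches the paper's: exponential supersolutions $\min\{1,e^{-\lambda(x-ct-x_0)}\}$ whose exponential part lies in $\{x\geq ct+x_0\}\subset\{x-c_{het}t\geq x_0\}$ when $c>c_{het}$, so that $\chi\leq d_-+\delta$ there (this is exactly how the paper gets the upper bounds $c_-$ and $c_{het}$ in the first two regimes, on top of the global $c_+$ bound of Proposition~\ref{prop:gen}); and compactly supported traveling bumps localized where $\chi$ is close to $d_+$ (behind the interface, for speeds $c<\min(c_+,c_{het})$) or close to $d_-$ (ahead of it), followed by the bootstrap and the entire-solution argument upgrading $\liminf u>0$ to convergence to $1$. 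The upper-bound half of your argument is correct as written. Two smaller remarks on the lower bounds: your ``keeping-up lemma'' via a stationary bump in the $c_{het}$-frame degenerates at $c_{het}=c_+$, where the limiting principal eigenvalue $\alpha-c_{het}^2/(4d_+)$ vanishes; the paper instead uses, uniformly over $c_-\leq c_{het}\leq c_+$, the same moving bumps at speed $c<\min(c_+,c_{het})$ as in your third regime, which cover the endpoint.

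The genuine flaw is the absorption step $|\varphi''|\leq C\varphi$ for the principal Dirichlet eigenfunction. This inequality is false: at an endpoint of the interval one has $\varphi=0$ while $d\,\varphi''=-c\,\varphi'\neq 0$, so $|\varphi''|/\varphi\to+\infty$ there, and the endpoint where this blows up is precisely one where $\varphi''$ has the uncontrollable sign (for a $d_-$-bump, $\varphi''<0$ near the left endpoint so $(d_--\chi)\varphi''\geq 0$; for a $(d_+-\delta')$-bump, $\varphi''>0$ near the right endpoint). Hence the subsolution inequality fails in a neighborhood of one endpoint of each bump unless $\chi$ coincides exactly with the comparison constant there. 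The paper repairs this in two ways, and you need at least one of them. In Lemma~\ref{lemsubcmincpchet} the profile is lifted by an additive constant $\epsilon$ and the support extended to the new zero level set; the reaction term then contributes a uniformly negative $-\delta_\epsilon\epsilon(\alpha-\epsilon)$ which absorbs an error of size $O(\epsilon^2\delta_\epsilon)$, the localization requirement being $|\chi-d_+|\leq\epsilon^2$ (quadratically small compared with the lift). Alternatively, the general sub-solution of Lemma~\ref{gensub} glues a $d_+$-profile on its concave part (where $\chi\leq d_+$ gives the favorable sign) to a $d_-$-profile on its convex part (where $\chi\geq d_-$ does) at their inflection points, so that no absorption and no localization are needed at all; this yields the universal bound $c_u^*\geq c_-$ for any $\chi$ with values in $[d_-,d_+]$ and is the cleaner route for the regime $c_{het}<c_-$.
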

It is indeed natural that the spreading speed $c^*_u$ (if it exists) should be less than both~$c_+$ (which is the speed when diffusivity is maximal) and the maximum of~$c_{het}$ and $c_-$ (since the solution may not spread faster than  $c_-$ in the part where diffusivity is minimal beyond $x \approx c_{het} t$). This theorem states that these intuitive upper bounds give precisely the spreading speed. We refer to Figure~\ref{fig:Cases} (a) for a numerical illustration.

\begin{figure}[!t]
\centering
 \subfigure[Case \eqref{ass:chi_dec}.]{\includegraphics[width=0.41\textwidth]{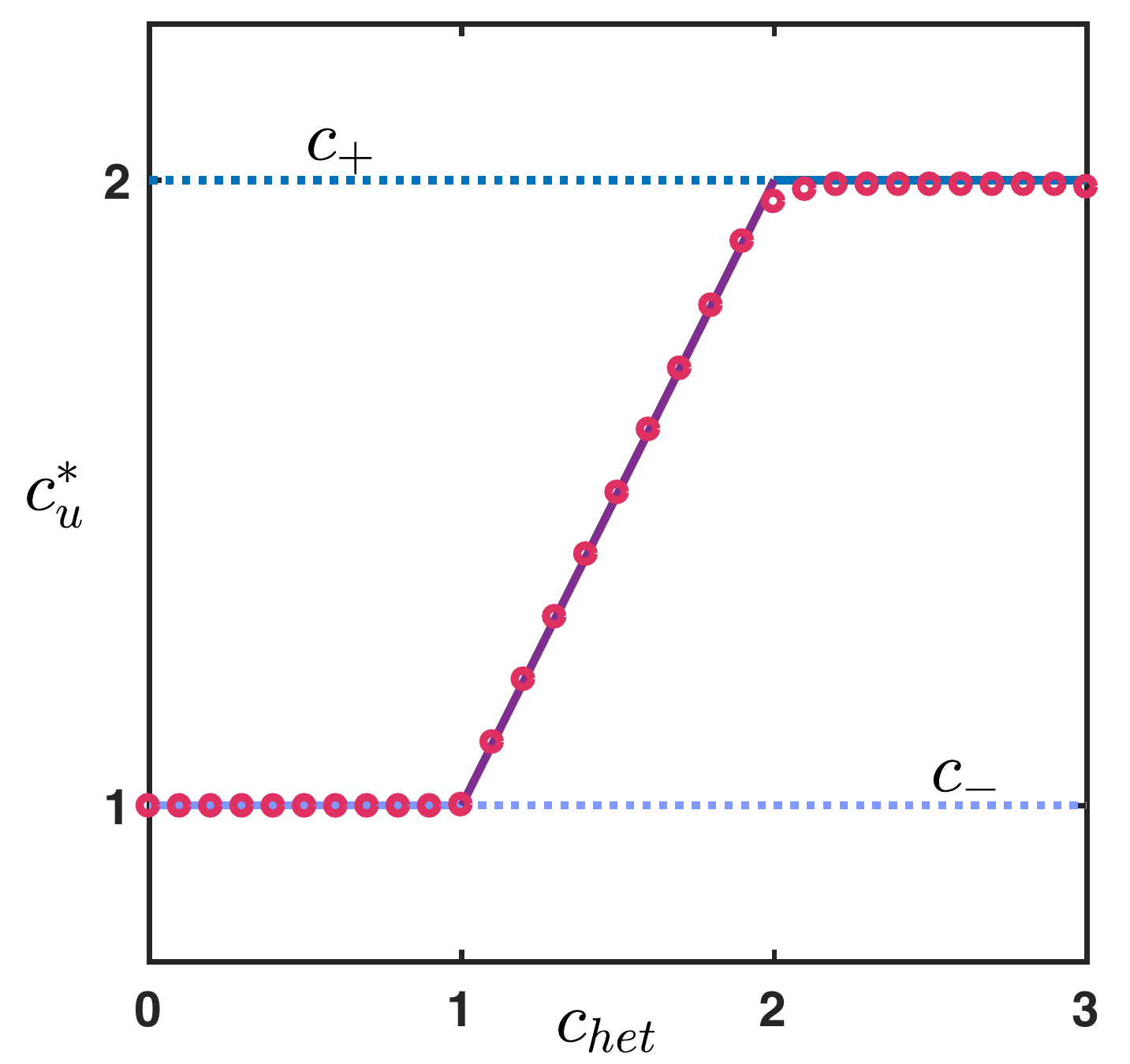}}
\hspace{1cm}
 \subfigure[Case \eqref{ass:chi_inc}.]{\includegraphics[width=0.49\textwidth]{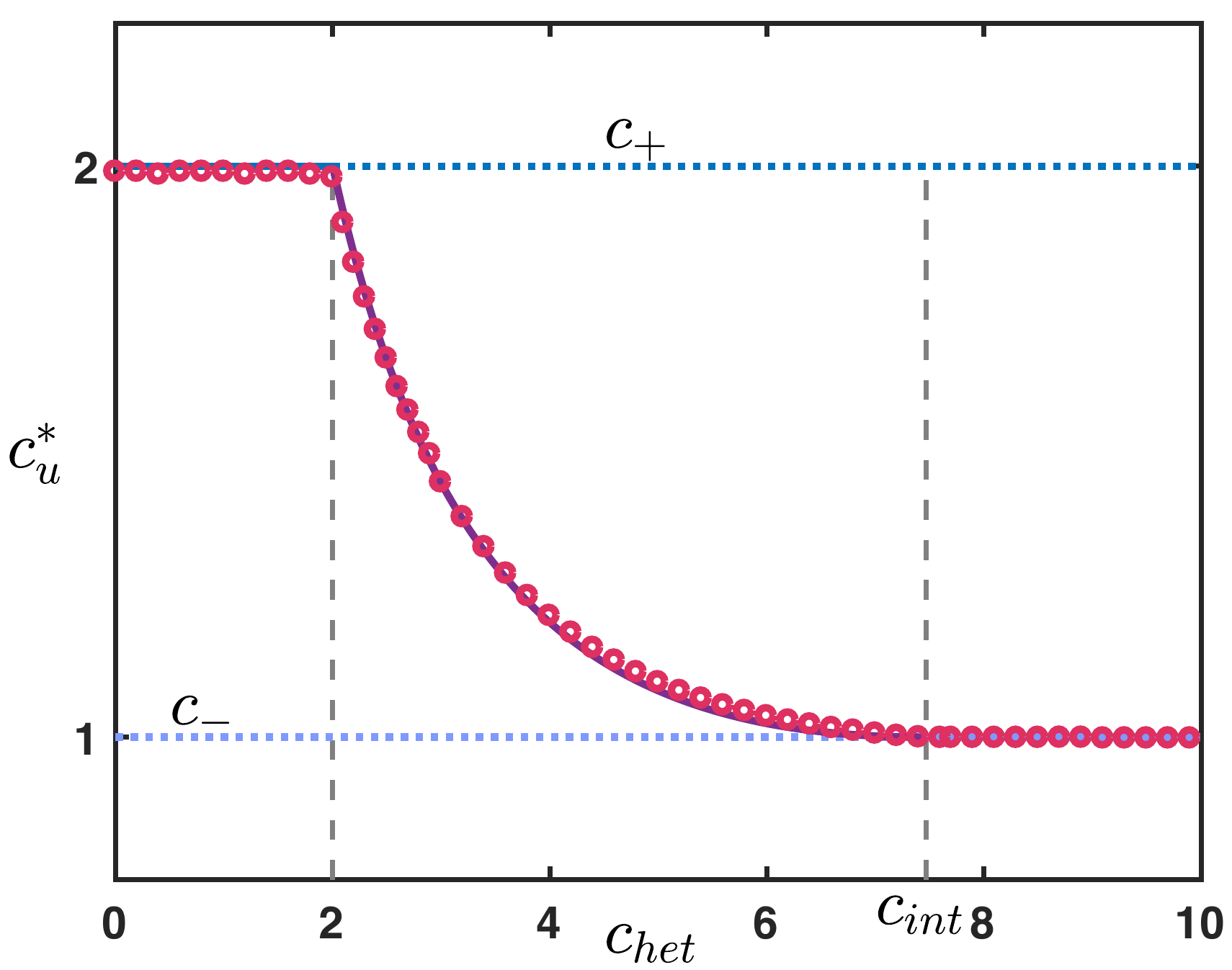}}
\caption{Numerically computed spreading speed $c_u^*$ (pink circles) as a function of $c_{het}$ for Case \eqref{ass:chi_dec} (left) and Case \eqref{ass:chi_inc} (right). The purple plain line is the theoretical spreading speed provided by Theorem~\ref{theo:chi_dec} and Theorem~\ref{theo:chi_inc}. In both cases parameters are fixed with $\alpha =1$, $d_+=1$ and $d_-=1/4$, such that the corresponding linear speeds are $c_+=2$ and $c_-=1$. The function $\chi$  was set to $\chi(x)=\frac{d_+ e^{-\lambda x} + d_-}{1+e^{-\lambda x}}$ in Case \eqref{ass:chi_dec} and to $\chi(x)=\frac{d_- e^{-\lambda x} + d_+}{1+e^{-\lambda x}}$ in Case \eqref{ass:chi_inc} with $\lambda=2$.}
\label{fig:Cases}
\end{figure}

We also note that when $c_{het}\in[c_-,c_+]$, the selected spreading speed $c_u^*$ is precisely the speed $c_{het}$ of the heterogeneity  and the invasion process for the $u$ component is locked to the heterogeneity; see also \cite{fh19}. We expect that in that case, the dynamics is dictated by traveling wave solutions~$U$ of
\begin{equation}
0=\chi(x) U''(x)+c_{het} U'(x)+\alpha U(x)(1-U(x)),\quad x\in\R,
\label{TWeq}
\end{equation}
that satisfy the conditions
\begin{equation}
U(-\infty)=1, \quad U(+\infty)=0, \quad \text{ with } \quad 0<U<1.
\label{TWlim}
\end{equation}

In order to state our second main result, we need to further assume the following assumption
\bqq
\left|\chi(x)-d_{\pm}\right| = \mathcal{O}\left(e^{-\nu|x|}\right), \quad \left|\chi'(x)\right| = \mathcal{O}\left(e^{-\nu|x|}\right), \quad  \text{ as } x\rightarrow\pm\infty,
\label{exp_decay}
\eqq
for some $\nu>0$. Although technical, the above assumption is natural if we come back to the reaction-diffusion system \eqref{eq:sys0} for which the traveling front solutions $v(t,x)=V(x-ct)$ for $c\geq 2\sqrt{\beta}$ are known to converge at an exponential rate towards their asymptotic limit states.

\begin{thm}\label{thmTF}
Assume that $\chi$ satisfies case \eqref{ass:chi_dec} and that \eqref{exp_decay} is verified. For each $c_{het}\in(c_-,c_+)$, there exists a unique strictly monotone traveling wave solution~$U$ of \eqref{TWeq}-\eqref{TWlim} with strong exponential decay at $+\infty$:
\begin{equation*}
U (x) \underset{x \rightarrow +\infty}{\sim} \gamma_s e^{-\lambda_s x}, \quad \lambda_s:=\frac{c_{het}+\sqrt{c_{het}^2-c_-^2}}{2d_-},
\end{equation*}
for some $\gamma_s>0$. For $c_{het}\in[0,c_-)$ or $c_{het}>c_+$ no such traveling wave can exist.
\end{thm}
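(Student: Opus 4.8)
The plan is to recast \eqref{TWeq} as the first-order system $U'=V$, $V'=-\chi(x)^{-1}\bigl(c_{het}V+\alpha U(1-U)\bigr)$ and to treat the connection from the left state $(1,0)$ at $x=-\infty$ to the right state $(0,0)$ at $x=+\infty$ by a shooting argument anchored at $+\infty$. Because of \eqref{exp_decay} the system is asymptotically autonomous with exponentially small corrections, so near $+\infty$ it is governed by the constant-diffusivity system with $\chi\equiv d_-$. At $(0,0)$ the linearization has eigenvalues $-\lambda_s<-\lambda_w<0$, where $\lambda_{s,w}=(c_{het}\pm\sqrt{c_{het}^2-c_-^2})/(2d_-)$, precisely when $c_{het}>c_-$; for $c_{het}\in[0,c_-)$ these are complex and $(0,0)$ is a stable focus, so any orbit reaching $0$ must spiral and become negative, which rules out a positive (hence strictly monotone) front and disposes of the case $c_{het}\in[0,c_-)$. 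When $c_{het}>c_-$ the origin is a stable node and its strong stable manifold (tangent to the eigenvector of $-\lambda_s$) is one dimensional; since the non-autonomous perturbation is exponentially small, this manifold persists and yields a one-parameter family of solutions $U_\gamma$ with $U_\gamma(x)\sim\gamma e^{-\lambda_s x}$ as $x\to+\infty$, $\gamma>0$. Every candidate front with the prescribed strong decay belongs to this family.

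For existence on $c_{het}\in(c_-,c_+)$ I would continue each $U_\gamma$ leftwards and classify it according to whether it crosses the level $U=1$. A comparison/maximum-principle argument shows that $\gamma\mapsto U_\gamma$ is monotone (larger $\gamma$ gives a pointwise larger profile while it remains small, a relation then propagated leftward), so the orbits are ordered and the crossing behavior is monotone in $\gamma$: for $\gamma$ large the steep profile overshoots and crosses $U=1$ with $U'<0$ at a finite point, while for $\gamma$ small it falls short of $1$. An intermediate-value argument then selects a separating value $\gamma=\gamma_s$ for which $U_{\gamma_s}$ neither overshoots nor falls short, hence is trapped in $(0,1)$ and converges to the saddle $(1,0)$ as $x\to-\infty$; the approach is along the unstable direction, i.e. $1-U\sim\beta e^{\mu x}$ with $\mu=(-c_{het}+\sqrt{c_{het}^2+c_+^2})/(2d_+)>0$. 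Strict monotonicity $U'<0$ then follows from the maximum principle applied to the linear equation satisfied by $U'$, obtained by differentiating \eqref{TWeq}, since $U'$ cannot vanish at an interior point without vanishing identically.

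Uniqueness is built into the same scheme: the strong stable manifold is one dimensional and the map $\gamma\mapsto$(behaviour at $-\infty$) is monotone, so at most one $\gamma$ yields the connection to $(1,0)$. Equivalently, two strictly monotone fronts with the same strong decay have ordered, hence equal, leading coefficients and then coincide by a sliding/comparison argument; note that, the equation being non-autonomous, there is no translation freedom, so the uniqueness is genuine rather than up to shift.

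The main obstacle is the non-existence statement for $c_{het}>c_+$, which must be global, since the linear pictures at both ends are unchanged ($(1,0)$ remains a saddle and $(0,0)$ a stable node). Here I would exploit $\chi\le d_+$ together with a comparison with the homogeneous problem of diffusivity $d_+$ and speed $c_{het}$. A transparent way to expose the threshold $c_+$ is the change of variables $y=\int_0^x\mathrm{d}s/\sqrt{\chi(s)}$, which normalizes the diffusivity to $1$ and produces a Fisher--KPP equation with a spatially varying local speed $\tilde c(y)=(c_{het}-\chi'/2)/\sqrt{\chi}$ interpolating between $c_{het}/\sqrt{d_+}$ at $y=-\infty$ and $c_{het}/\sqrt{d_-}$ at $y=+\infty$; the condition $c_{het}>c_+$ is exactly $\tilde c(-\infty)>2\sqrt{\alpha}$, i.e. the medium is everywhere supercritical. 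One then shows that in this regime the strong-stable orbit from $+\infty$, continued leftward, overshoots $U=1$ for every $\gamma>0$ (so no admissible $\gamma_s$ survives), by comparison with the frozen constant-speed phase portraits, in which the strong stable manifold of the origin does not connect to the saddle. Controlling this overshoot uniformly --- equivalently, proving that the monotone selection breaks down precisely at $c_{het}=c_+$ --- is the crux; I expect it to require either a quantitative comparison against the $d_+$-front, or a careful Riccati analysis of $p=-U'/U$, which satisfies $p'=p^2-(c_{het}/\chi)p+\alpha(1-U)/\chi$ with $p(-\infty)=0$ and $p(+\infty)=\lambda_s$, rather than a soft argument.
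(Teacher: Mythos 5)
Your shooting-from-$+\infty$ framework is a genuinely different route from the paper's, and parts of it are sound (the spiral argument for $c_{het}\in[0,c_-)$, the identification of the one-dimensional strong-stable family $U_\gamma$ via exponential dichotomies). But the existence step for $c_{het}\in(c_-,c_+)$ has a genuine gap. First, the claimed monotonicity of $\gamma\mapsto U_\gamma$ is not available: the difference $w=U_{\gamma_1}-U_{\gamma_2}$ of two exact solutions satisfies $\chi w''+c_{het}w'+\alpha\bigl(1-U_{\gamma_1}-U_{\gamma_2}\bigr)w=0$, whose zeroth-order coefficient has no sign, so no maximum principle orders the profiles leftward; with that, both the intermediate-value selection and your uniqueness argument collapse. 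Second, your dichotomy ``overshoots $U=1$'' versus ``falls short of $1$'' is not exhaustive (the backward continuation can cross $U=0$ and oscillate, since in the region $\chi\approx d_+$ the frozen system at speed $c_{het}<c_+$ has a focus at the origin), and the half of the argument asserting ``falls short for small $\gamma$'' is precisely where the hypothesis $c_{het}<c_+$ must enter --- yet your sketch never uses it. The paper injects this hypothesis through the negativity of the generalized principal eigenvalue of $\cL=\chi(x)\partial_x^2+c_{het}\partial_x+\alpha$ (which holds because $4\chi(x)\alpha-c_{het}^2>0$ on a left half-line), producing a compactly supported stationary sub-solution; the front is then obtained as the monotone large-time limit of the parabolic flow started from an explicit exponential super-solution, and the strong decay is read off afterwards from the super-solution bound $U\le e^{-\lambda_\epsilon(x-\tau)}$ with $\lambda_\epsilon>c_{het}/(2d_-)$ combined with the dichotomy gap. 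Some substitute for this mechanism is needed in your scheme.

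For $c_{het}>c_+$ you correctly identify the difficulty but leave it open; note that the statement only excludes fronts with \emph{strong} decay, and the paper settles it by exactly the Riccati-type bound you mention in passing: a first-touching-point argument shows $U'(x)>-\frac{c_{het}}{2d_+}U(x)$ for all $x$ (using $\alpha U(1-U)<\frac{c_{het}^2}{4d_+}U$ and $\chi\le d_+$), hence $U(x)>U(0)e^{-c_{het}x/(2d_+)}$, which is incompatible with decay at rate $\lambda_s>c_{het}/(2d_-)>c_{het}/(2d_+)$. So that case is a short computation, not a global overshoot analysis. Finally, the paper's uniqueness proof does not rely on any ordering of a shooting family: it is a weighted Wronskian identity, integrating $V\,\frac{\md}{\md x}(e^{\beta}U')-U\,\frac{\md}{\md x}(e^{\beta}V')$ with $\beta(x)=\int_0^x c_{het}/\chi$ over a maximal interval where $U>V$, with the strong decay guaranteeing convergence of the boundary terms; you would need something of this kind (or a genuine sliding argument, which again requires a comparison structure you have not established) to close uniqueness.
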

We conjecture that $u(t,x)$ the solution of \eqref{eq:main} converges to the traveling wave~$U$ as $t \to +\infty$ in the moving frame with speed~$c_{het}$, but we do not address this issue here.

\subsection{Case \eqref{ass:chi_inc}}\label{sec:case_ass_chi_inc}

The other situation turns out to be slightly more intricate. Our main result writes as follows:
\begin{thm}\label{theo:chi_inc}
Assume that $\chi$ satisfies case \eqref{ass:chi_inc} and that \eqref{exp_decay} is verified. Consider a compactly supported and nontrivial initial datum $u_0$ such that $0 \leq u_0 \leq 1$. Then the solution of \eqref{eq:main} spreads to the right with some speed $c^*_u$ in the sense that
$$\forall 0 < c < c^*_u, \quad \limsup_{t \to +\infty} \ \sup_{0 \leq x \leq ct} |1 - u(t,x) | =0,$$
$$\forall c > c^*_u, \quad \limsup_{t \to +\infty} \ \sup_{ x \geq ct} u(t,x) =0.$$
Moreover, we have
$$c^*_u = c_+  \quad  \mbox{ if } \ \ c_{het} < c_+ ,$$
while
$$c^*_u =\frac{c_{het}}{2}\left(1 - \sqrt{1-\frac{d_-}{d_+}}\right)+\frac{c_-^2}{2 c_{het} \left(1 - \sqrt{1-\frac{d_-}{d_+}}\right)} \in (c_-, c_+]  \quad \mbox{ if } \ \  c_+ \leq c_{het}<  c_{int},$$
and
$$c^*_u = c_-  \quad \mbox{ if } \ \  c_{int} < c_{het} .$$
Here 
$$c_{int} :=c_+ \left(  \sqrt{\frac{d_+}{d_-}} + \sqrt{ \frac{d_+}{d_-} - 1 } \right).$$
\end{thm}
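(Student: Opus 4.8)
The plan is to pass to the frame moving at speed $c_{het}$, in which the diffusivity $\chi(\xi)$, $\xi:=x-c_{het}t$, is stationary, and to read off the speed from the leading-edge dynamics of the linearization at $u\equiv0$. In this frame the linearized equation is $\partial_t w=\chi(\xi)\partial_\xi^2 w+c_{het}\partial_\xi w+\alpha w$, and a separated mode $e^{\omega t-\mu\xi}$ satisfies the dispersion relation $\omega=d_\pm\mu^2-c_{het}\mu+\alpha$ according to whether $\xi\to\pm\infty$. The intermediate candidate comes from a marginal-stability/matching condition at the interface $\xi=0$: one selects the growth rate $\omega^*=\alpha-c_{het}^2/(4d_+)$ at which the ahead (high-diffusivity) problem is critical, i.e. $d_+\mu^2-c_{het}\mu+(\alpha-\omega^*)$ has the double root $\mu_+=c_{het}/(2d_+)$, and matches it to the decaying behind (low-diffusivity) mode with rate $\lambda=\frac{c_{het}}{2d_-}\left(1-\sqrt{1-d_-/d_+}\right)$. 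A direct computation gives the lab-frame speed $c^*_u=d_-\lambda+\alpha/\lambda$, which I would check is exactly the stated formula, equals $c_+$ at $c_{het}=c_+$ and equals $c_-$ at $c_{het}=c_{int}$. The three regimes correspond to the sign of $\omega^*$ and to the position of $\lambda$ relative to $\lambda^*_\pm:=\sqrt{\alpha/d_\pm}$: for $c_{het}<c_+$ one has $\omega^*>0$, the ahead field genuinely grows and the front escapes into the favorable region at $c_+$; for $c_{het}\ge c_{int}$ the matched rate $\lambda$ exceeds $\lambda^*_-$, so the edge relaxes to the minimal-speed $d_-$ pulled front and $c^*_u=c_-$; in between sits the interface-controlled speed.

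\textbf{Upper bound.} For each $c>c^*_u$ I would build a global supersolution of \eqref{eq:main} in the moving frame of the form $\min\!\left(1,\,Ce^{\omega^* t}\Psi(\xi)\right)$, where $\Psi>0$ supersolves the stationary problem $\chi\Psi''+c_{het}\Psi'+(\alpha-\omega^*)\Psi\le0$ with prescribed asymptotics $\Psi\sim e^{-\lambda\xi}$ as $\xi\to-\infty$ and $\Psi\sim P(\xi)e^{-\mu_+\xi}$ (affine $P$) as $\xi\to+\infty$. Existence of such a connecting profile is a shooting argument in which \eqref{exp_decay} is precisely what absorbs the difference between $\chi$ and its limits $d_\pm$ into exponential corrections, and monotonicity \eqref{ass:chi_inc} guarantees a single transition. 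Evaluating this supersolution along $x=ct$ yields the exponent $\omega^*-\lambda(c-c_{het})$, which is negative exactly when $c>c_{het}+\omega^*/\lambda=c^*_u$, giving $\sup_{x\ge ct}u\to0$. In the regime $c_{het}\ge c_{int}$ the matched profile has steep decay ($\lambda>\lambda^*_-$) and does not by itself give the sharp bound $c_-$; there I would instead patch the ordinary speed-$c_-$ supersolution of the homogeneous $d_-$ equation (valid behind the interface) to a piece controlled by the negative growth rate $\omega^*\le0$ of the receding favorable region ahead.

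\textbf{Lower bound.} I would produce, for every $c<c^*_u$, a subsolution advancing at speed $c$. When $c_{het}<c_+$ the favorable region is genuinely unstable, and a compactly supported subsolution built from the principal eigenfunction of $d_+\partial_\xi^2+c_{het}\partial_\xi+\alpha$ on a large interval placed ahead of the interface spreads at $c_+$; when $c_{het}\ge c_{int}$, comparison behind the interface with the homogeneous $d_-$ equation yields $c_-$, while the matching upper bound shows this is sharp. The delicate case is the intermediate regime, where the subsolution must straddle the interface so that the time-decaying but spatially shallow contribution of the favorable region continuously feeds the low-diffusivity leading edge and sustains the shallow rate $\lambda<\lambda^*_-$; a natural route is to treat the interface as a moving influx boundary, reducing the behind dynamics to a one-sided FKPP problem driven through $\xi=0$, and to realize the feeding by a subsolution carrying a long shallow tail (or a slowly growing support) that keeps the front in contact with the interface.

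\textbf{Main obstacle.} The crux is this intermediate lower bound. Unlike the classical Fisher-KPP situation, the growth that drives the front does \emph{not} come from the region where the leading edge sits: since $c^*_u<c_{het}$, the favorable region recedes from the front, so one cannot localize a subsolution in a favorable patch and let it grow. One must show quantitatively that the flux across the interface is sufficient to lock the front at the super-$c_-$ speed $c^*_u$, i.e. that the shallow matched exponential is dynamically selected rather than overtaken by the steeper minimal-speed $d_-$ front, and that the switch between these two selection mechanisms occurs exactly at $c_{het}=c_{int}$, where $\lambda=\lambda^*_-$. Controlling this competition — sustaining a long shallow tail that harvests the receding interface — is where the decay rate in \eqref{exp_decay} and the monotone structure \eqref{ass:chi_inc} are indispensable, and it is the step I expect to be hardest to make rigorous.
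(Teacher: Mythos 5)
Your heuristic derivation of the three regimes and of the intermediate speed via the marginal condition $\eta(c)=\alpha-c_{het}^2/(4d_+)$ matches the paper's own formal discussion, and your upper-bound strategy is essentially the paper's: the paper uses an explicit piecewise super-solution $C\min\{1,e^{-\lambda(c_u^*)(x-ct+\tau)},e^{-\lambda(c_u^*)(c_{het}-c)t}e^{-\frac{c_{het}}{2d_+}(x-c_{het}t+\tau)}\}$-type profile with a corner of the correct sign at $x=c_{het}t-\tau$, rather than a smooth shooting profile $\Psi$, but the exponents are the same and your evaluation along $x=ct$ is the right computation. However, the proposal has a genuine gap exactly where you flag it: the lower bound in the regime $c_+\le c_{het}<c_{int}$ is never constructed, only described as ``a subsolution carrying a long shallow tail.'' This is not a technicality one can defer; it is the heart of the theorem, and the phrase ``moving influx boundary'' does not by itself produce a comparison function. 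The paper's resolution is concrete and worth internalizing: for $c_-<c<c_u^*$ one takes, behind the heterogeneity, the classical KPP-type subsolution $\underline{u}_{1,\tau}=\max\{0,e^{-\lambda_\epsilon(c)(x-ct+\tau)}-e^{-(\lambda_\epsilon(c)+\gamma)(x-ct+\tau)}\}$ (compactly supported to the left but \emph{not} to the right), and closes it off on the right by gluing, at $x-c_{het}t=-\tau/2$, a multiple of a generalized eigenfunction $\varphi_{\lambda_\star-\epsilon}$ of $\mathcal{L}=\chi(\cdot)\partial_x^2+c_{het}\partial_x+\alpha$ with eigenvalue $\lambda_\star-\epsilon$ just below the absolute-spectrum edge $\lambda_\star=\alpha-c_{het}^2/(4d_+)$. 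Because $c_{het}^2<4d_+(\alpha-\lambda_\star+\epsilon)$, this eigenfunction oscillates at $+\infty$ and can be truncated at its first zero, which is what makes the subsolution compactly supported; the gluing is admissible because $-\lambda(c)(c_{het}-c)<-\lambda(c_u^*)(c_{het}-c_u^*)=\lambda_\star$ for $c<c_u^*$, and the derivative jump has the right sign because $\lambda_\epsilon(c)>\lambda(c_u^*)$. Without some such construction (and the verification of the jump conditions), your argument does not establish $c_u^*$ as a lower bound, and hence does not prove the anomalous speed.

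A second, smaller but real, error: for $c_{het}<c_+$ you propose a subsolution built from the principal eigenfunction of $d_+\partial_\xi^2+c_{het}\partial_\xi+\alpha$ on a large interval in the frame $\xi=x-c_{het}t$. A bump that is stationary in that frame only travels at lab speed $c_{het}$, so this yields spreading speed at least $c_{het}$, not $c_+$. To reach $c_+$ you must use a bump moving at a lab speed $c\in(c_{het},c_+)$ (the paper's $u^c_{\tau,\epsilon}$, a truncated $e^{-\frac{c}{2d_+}z}\cos(\beta_+ z)$ profile in $z=x-ct-\tau$): since $c>c_{het}$ its support drifts to $+\infty$ in the $\xi$ variable and hence remains in the region where $\chi\approx d_+$, which is what makes the $d_+$ dispersion relation applicable. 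Likewise, your upper bound for $c_{het}\ge c_{int}$ needs the explicit two-exponential patching with $\mu_-=\frac{c_{het}+\sqrt{g(c_-)}}{2d_+}$ (well defined precisely because $g(c_-)\ge0$ iff $c_{het}\ge c_{int}$), together with the sign check $\mu_->\frac{c_-}{2d_-}$ on the derivative jump; your sketch gestures at this but does not supply the inequality that makes $c_{int}$ the exact threshold.
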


Again we find several subcases depending on the value of $c_{het}$. When $c_+ > c_{het}$, then individuals move fast enough to keep up with the favorable zone beyond the front of the heterogeneity, so that propagation reaches it full speed $c_+$. In particular, the population spreads as if there was no heterogeneity.

The case when $c_+ < c_{het}$ is less intuitive. Because the solution cannot spread faster than $c_+$ and therefore it vanishes in the `favorable zone' as times goes to infinity, one may have expected that the solution behaves as in the equation with lower diffusivity $d_-$ and thus spreads with speed $c_-$. According to Theorem~\ref{theo:chi_inc}, it turns out that this intuition is true if $c_{het}$ is large enough. Yet there exists some intermediate range where spreading actually occurs with speed $c_u^*$ which is strictly larger than $c_-$. This means that the far away `favorable' zone still plays a role in the propagation, which can be related to the phenomena of accelerated fronts  or nonlocal pulling \cite{GL19,HS14}. We refer to Figure~\ref{fig:Cases} (b) for a numerical illustration.

This observation is especially striking when $d_- $ is small. Indeed, let us take the formal limit $d_- = 0$ in Theorem~\ref{theo:chi_inc}. Then $c_{int}= c_+ \left(  \sqrt{\frac{d_+}{d_-}} + \sqrt{ \frac{d_+}{d_-} - 1 } \right)  \to +\infty$ and it can also be checked that $c_u^* \to \frac{4 d_+ \alpha}{c_{het}} >0 $ for any $c_{het} >c_+$ as $d_- \to 0$. This strikingly suggests that, even if diffusivity vanishes around any positive level set of the solution, spreading still occurs with a positive speed.\medskip

Before we proceed to the proofs, let us give some formal computation to explain the appearance of an anomalous speed. Assume that $c_+ < c_{het}$, and that
$$\chi (z) := \left\{ \begin{array}{l}
d_- \mbox{ if } z \leq 0, \vspace{3pt}\\
d_+ \mbox{ if } z  > 0.
\end{array}
\right.
$$
Notice that such $\chi$ is no longer smooth but it allows us to perform an explicit computation. Since the reaction term $\alpha u (1-u)$ is concave, it is of the Fisher-KPP type and it is reasonable to expect that the spreading speed is dictated by the linearized equation
$$\partial_t u = \chi (x-c_{het} t) \partial_x^2 u + \alpha u .$$
By analogy with the usual Fisher-KPP equation, it is also natural to look for an exponential ansatz. However, if one tries an ansatz of the type
$$e^{-\lambda (x - ct)},$$
one immediately sees that both resulting dispersion equations (for $x \leq c_{het} t$ and $x > c_{het} t$) cannot be satisfied simultaneously. Therefore, we instead look for an ansatz of the type
$$\left\{ \begin{array}{l}
e^{-\lambda (x-ct)} \mbox{ if } x \leq c_{het} t,\vspace{3pt}\\
e^{-\lambda (c_{het} -c) t} \times e^{-\mu (x - c_{het} t)} \mbox{ if } x \geq c_{het} t ,
\end{array}
\right.
$$
where $\lambda, \mu >0$ and $c \in \R$. Note that the resulting function is continuous. Plugging this in the linearized equation, we find that
$$d_- \lambda^2 - c\lambda + \alpha = 0 \quad \mbox{and} \quad  d_+ \mu^2 - c_{het} \mu + \lambda (c_{het} - c)  + \alpha = 0.$$
The resulting conditions for finding such an ansatz are
\begin{equation}\label{eq:anomalous_condition0}
c \geq c_-, \quad g (c) \geq 0,
\end{equation}
where
$$g(c) := c_{het}^2 - 4 d_+ \left[ \alpha  + \frac{c - \sqrt{c^2 - 4  d_- \alpha}}{2 d_-} (c_{het} - c) \right] .$$
On the one hand, it is easy to check that both conditions are satisfied when $c \geq c_{het}$ (recall that $c_{het} >c_+$ here). On the other hand,  it is straightforward to compute that 
$$\forall c \in [c_- , c_{het}], \quad g ' (c) > 0.$$
In particular, if $g (c_- ) \geq 0$, then we conclude that there is an exponential ansatz for any $c \geq c_- $. This occurs when
$$c_{het}^2 - 4 d_+ \sqrt{\frac{\alpha}{d_-}} c_{het} + 4 d_+ \alpha \geq 0,$$
which is equivalent to
$$c_{het} \geq c_+ \left(  \sqrt{\frac{d_+}{d_-}} + \sqrt{ \frac{d_+}{d_-} - 1 } \right)=c_{int}.$$
By analogy with the homogeneous case, it is reasonable to expect that under the previous condition the solution spreads with speed~$c_-$ which is precisely what is stated in Theorem~\ref{theo:chi_inc}. 

It remains to consider the case when $g (c_-) < 0$. Then we claim that
$$g (c_+ ) >0.$$
To check this, first compute
\begin{equation}\label{cla:g}
g(c_+) = c_{het}^2 - 4 d_+ \left[ \alpha  + \frac{c_+ - \sqrt{c_+^2 - 4 d_- \alpha }}{2 d_-} (c_{het} - c_+) \right].
\end{equation}
Notice that $g (c_+ ) = 0$ when $c_{het} = 0$. Hence it is enough to show that the derivative with respect to $c_{het}$ is positive, which is indeed the case as
\begin{eqnarray*}
2 c_{het} - 2 \frac{d_+}{d_-} \left[ c_+ - \sqrt{c_+^2 - 4 d_- \alpha } \right] 
& \geq & 2 c_+  - 2 \frac{d_+}{d_-} \left[ c_+ - \sqrt{c_+^2 - 4 d_- \alpha } \right] \\
& \geq &  4 \sqrt{d_+ \alpha} \left[ 1 - \frac{d_+}{d_-} + \frac{d_+}{d_-} \sqrt{1 - \frac{d_-}{d_+}} \right] \\
& \geq & 4 \sqrt{d_+ \alpha} \left[ 1 - \frac{d_-}{2 d_+}  \right] \\
& > & 0.
\end{eqnarray*}
Claim~\eqref{cla:g} is proved and it follows that $g^{-1} (0) \in (c_- , c_+)$ (here $g$ is understood as a function on the interval $[c^-, c^+]$ where it is invertible), and that an exponential ansatz exists if and only $c \geq g^{-1} (0)$. Furthermore, upon denoting $\lambda_\star := \alpha - \frac{c_{het}^2}{4d_+}<0$ (as $c_+<c_{het}$) and letting 
$$c= c_{het} - \frac{1}{\zeta}, $$
one may check that $g(c) = 0$ is equivalent to
$$2 d_- \lambda_\star \zeta^2 + c_{het} \zeta - 1 =  \sqrt{ (c_{het} \zeta - 1)^2 - 4 d_- \alpha \zeta^2  } .$$
The part inside the square root is positive, so that one eventually reaches
$$d_- \lambda_\star \zeta^2 +  c_{het} \zeta - 1  +  \frac{\alpha}{\lambda_\star}  = 0.$$
Since $\zeta$ must be positive in order for $c$ to belong to the interval $(c_-, c_+)$, we conclude that
$$g^{-1} (0) = c_{het}-\frac{2d_-\lambda_\star}{-c_{het}+\sqrt{c_{het}^2-4d_-(\alpha-\lambda_\star)}},$$
and we recover the formula for the spreading speed in Theorem~\ref{theo:chi_inc} by using the expression for $\lambda_\star$.

Building upon the argument above, we can further motivate the emergence of the accelerated front with speed $c_u^*$ as a matching condition in the following sense. We attempt to build a solution of~(\ref{eq:main}) which resembles a traveling front of the Fisher-KPP equation $0=d_- U''+cU'+\alpha U(1-U)$ on the left concatenated on the right with a solution of the linearized equation near zero in a frame moving with the heterogeneity.  If $c>c_-$ then there exists a family of traveling fronts traveling with speed $c$ and whose profile resembles $e^{-\lambda(c)(x-ct)}$ as $x-ct \to +\infty$.  In a frame of reference moving with speed $c_{het}$ this front can be viewed as forming an effective boundary condition for the PDE (\ref{eq:main}) linearized near zero.  Rescaling by $u(t,x):=e^{\eta(c) t}v(t,x)$ this PDE satisfies the boundary value problem 
\begin{equation} v_t=\chi(x) v_{xx}+c_{het}v_x+(\alpha-\eta(c)) v, \quad  v_x(-L)=-\lambda(c)v(-L), \quad v_x(L)=-\gamma(c)v(L) \label{eq:veqntry}  \end{equation}
for some $L$ sufficiently large and $\eta(c)=-\lambda(c) (c_{het}-c)$.  For generic separated boundary conditions, it is known that the point spectrum of the linear operator appearing on the right hand side of (\ref{eq:veqntry}) will accumulate on the absolute spectrum in the limit as $L\to\infty$; see \cite{sandstede00,kapitula}.   The absolute spectrum in this case is
\[ \Sigma_{abs}=\left\{ \lambda \leq \alpha-\eta(c) -\frac{c_{het}^2}{4d_+} \right\}. \]
Thus, if $c$ is selected so that $\eta(c)<\alpha -\frac{c_{het}^2}{4d_+}$ then one would expect that the constructed solution would be pointwise unstable due to unstable point spectrum of (\ref{eq:veqntry}).  Conversely, if $\eta(c)>\alpha-\frac{c_{het}^2}{4d_+}$ then solutions of (\ref{eq:veqntry}) will decay pointwise and matching with the front will fail.  Therefore, we select $c_u^*$ so that 
\[ \eta(c_u^*)=\alpha-\frac{c_{het}^2}{4d_+}=\lambda_\star, \]
which upon inspection is equivalent to the condition that $g(c_u^*)=0$.

\paragraph{Outline of the paper.} In Section~\ref{sec:gen}, we provide general bounds on the spreading speed, and prove that if it exists it should be bounded respectively from above and below by $c_+$ and $c_-$. Sections~\ref{sec:dec} and~\ref{sec:inc} are dedicated to the proofs of our main theorems on spreading speeds in cases \eqref{ass:chi_dec} and \eqref{ass:chi_inc} respectively. For each case, we construct sub and/or super-solutions to bound adequately the spreading speed. Finally, in the last Section~\ref{sec:TW}, we prove the existence and uniqueness of traveling front solutions with strong exponential decay at $+\infty$ in case \eqref{ass:chi_dec} and for each $c_{het}\in(c_-,c_+)$.

\section{General bounds on the spreading speed}\label{sec:gen}

In this section, we aim at confirming rigorously the natural intuition that the spreading speed should always be bounded from respectively above and below by $c_+ = 2 \sqrt{d_+ \alpha}$ and $c_- = 2 \sqrt{d_- \alpha}$. We will do this by constructing some super and sub-solutions which are valid in all the cases considered in our main theorems. Actually, our bounds on the spreading speed remain true for more general form of diffusivity and therefore our results here can be of independent interest. 

Throughout this section, we consider
\begin{equation}
\partial_t u =  \chi (t,x) \partial_{x}^2 u + \alpha u (1-u),
\label{kpp}
\end{equation}
with $\alpha>0$ and some smooth function $\chi$ that only satisfies $$ d_- \leq \chi(t,x) \leq  d_+ \ \mbox{ for all } \ t\geq0 \text{ and } \ x\in\R ,$$
where 
$$0< d_- < d_+.$$
We will prove the following result.
\begin{prop}\label{prop:gen}
Consider a compactly supported and nontrivial initial datum $u_0$ such that $0 \leq u_0 \leq 1$. Then the solution $u$ of \eqref{kpp} satisfies that
$$\forall \, 0 < c < c_-, \quad \limsup_{t \to +\infty} \ \sup_{0 \leq x \leq ct} |1 - u(t,x) | =0,$$
$$\forall \, c > c_+, \quad \limsup_{t \to +\infty} \ \sup_{ x \geq ct} u(t,x) =0.$$
\end{prop}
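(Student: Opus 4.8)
The plan is to prove both one-sided bounds by the parabolic comparison principle, constructing explicit super- and sub-solutions that depend on $\chi$ only through the two-sided bound $d_-\le\chi\le d_+$. The useful structural fact is that the sign of the second spatial derivative of a comparison function decides which of $d_-$ or $d_+$ is the ``worst'' diffusivity: on a convex profile one has $\chi\,\partial_x^2\phi\ge d_-\,\partial_x^2\phi$, while on a concave profile $\chi\,\partial_x^2\phi\ge d_+\,\partial_x^2\phi$ (with the reversed inequalities when building a supersolution). Since both statements concern spreading to the right, it suffices to control the right edge of the solution.

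For the upper bound ($c>c_+$) I would take the supersolution $\bar u(t,x)=\min\{1,\,A e^{-\lambda(x-c't)}\}$ for some $c_+<c'<c$. The constant $1$ is a supersolution of \eqref{kpp}, and since $\alpha u(1-u)\le\alpha u$ it is enough to make the exponential a supersolution of the linearized equation $\partial_t v=\chi\partial_x^2 v+\alpha v$. Because the exponential is convex, $\chi\,\partial_x^2(e^{-\lambda\xi})\le d_+\lambda^2 e^{-\lambda\xi}$, so the requirement reduces to the homogeneous dispersion relation $d_+\lambda^2-c'\lambda+\alpha\le0$, which admits a root $\lambda>0$ precisely because $c'>c_+$. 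The minimum of two supersolutions is again a supersolution, and choosing $A$ large gives $\bar u(0,\cdot)\ge u_0$; comparison then yields $u(t,x)\le Ae^{-\lambda(c-c')t}\to0$ uniformly on $\{x\ge ct\}$.

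For the lower bound ($0<c<c_-$) I would work in the moving frame $\xi=x-ct$ and look for a small, compactly supported subsolution $\underline u=\delta\,\rho(\xi)$ with $\rho>0$ on $(-L,L)$ and $\rho(\pm L)=0$. Letting $\delta\to0$ linearizes the subsolution inequality to $\chi\rho''+c\rho'+\alpha\rho\ge0$ for every $\chi\in[d_-,d_+]$, which one recognizes as a principal-eigenvalue problem on a large interval: since $c<c_-$ one has $\alpha-\frac{c^2}{4d_-}>0$, so for $L$ large a positive Dirichlet eigenfunction with nonnegative eigenvalue exists and furnishes a subsolution at least for $\chi=d_-$. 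Once $u$ is bounded below by $\delta$ on an interval growing at speed $c$, a standard KPP bootstrap (comparison with the homogeneous ODE $\dot w=\alpha w(1-w)$, or sliding of the subsolution) upgrades this to $u\to1$ locally uniformly, giving $\sup_{0\le x\le ct}|1-u|\to0$.

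The main obstacle is this last step, and specifically the requirement that a single profile be a subsolution for the whole range $[d_-,d_+]$ at the \emph{sharp} speed $c_-$: any compactly supported bump is concave near its maximum, where the adversarial choice $\chi=d_+$ works against the subsolution, and the naive eigenfunction built for $d_-$ only survives this for speeds up to $c_-\sqrt{d_-/d_+}<c_-$. I would therefore refine the profile so that its curvature near the peak is gentle enough that the reaction term $\alpha\rho$ dominates $d_+\rho''$ there (for instance an almost-flat top), while keeping the tails convex so that $d_-$ governs them; this is what should recover the full speed $c_-$, consistent with the fact that $\chi\equiv d_-$ is admissible and already spreads exactly at $c_-$. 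The upgrade from a small lower bound $\delta$ to the value $1$, while classical, must also be checked to be uniform in the heterogeneity.
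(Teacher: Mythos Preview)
Your supersolution for the upper bound is correct and coincides with the paper's (Lemma~\ref{gensuper}): take $\bar u=\min\{1,Ce^{-\lambda_+(x-c_+t)}\}$ with $\lambda_+=c_+/(2d_+)$; convexity of the exponential lets you replace $\chi$ by $d_+$, and the dispersion relation $d_+\lambda_+^2-c_+\lambda_++\alpha=0$ closes the argument.

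For the lower bound you have correctly diagnosed the real obstruction: a single compactly supported bump cannot serve as a subsolution for \emph{all} $\chi\in[d_-,d_+]$ at speeds close to~$c_-$, because on the concave portion the adversarial choice $\chi=d_+$ wins. Your proposed fix (``gentle curvature near the peak, convex tails governed by $d_-$'') is the right architecture, but it remains a heuristic; you have not produced a profile that actually works, and an ``almost-flat top'' glued to convex tails does not match $C^1$ in any obvious way. The paper's construction makes this precise and is worth knowing. For $c\in(0,c_-)$ and small $\delta>0$, let
\[
\Psi_\pm(z)=e^{-\gamma_\pm z}\cos(\omega_\pm z),\qquad d_\pm\Psi_\pm''+c\Psi_\pm'+(\alpha-\delta)\Psi_\pm=0,
\]
and let $z_\pm^*$ be the unique inflection point of $\Psi_\pm$ on its positive half-period. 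The subsolution is built by taking the \emph{concave} arc of $\Psi_+$ on the left and the \emph{convex} arc of $\Psi_-$ on the right, each translated so that its inflection point sits at the origin of the moving frame, and rescaled by a constant $\rho>0$ to match values there. Since both pieces satisfy linear ODEs with identical first- and zeroth-order coefficients and both have vanishing second derivative at the junction, the ODEs force the first derivatives to match as well: the glued profile is $C^2$. On the concave piece one has $(\chi-d_+)\Psi_+''\ge0$, and on the convex piece $(\chi-d_-)\Psi_-''\ge0$; this yields the subsolution inequality for every admissible $\chi$ and every $c<c_-$.

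The bootstrap from a positive lower bound to convergence to~$1$ is handled in the paper as follows: the subsolution and its spatial reflection give $u(t,\pm c't)\ge\nu>0$ for $c<c'<c_-$; the constant $\nu$ is then a subsolution on the expanding domain $\{|x|\le c't\}$ with boundary data $\ge\nu$; finally a compactness argument on shifted limits reduces to the ODE $\dot w=\alpha w(1-w)$, which drives any entire solution bounded below by $\nu$ to the constant~$1$, uniformly in the limiting diffusivity.
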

In particular, the rightward spreading speed $c_u^*$ (if it exists) must satisfy $c_-\leq c_u^*\leq c_+$.
We refer to \cite{BN12,BN21} for other general results on monostable equations with spatio-temporal heterogeneities.

\subsection{A general super-solution}

We start the proof of Proposition~\ref{prop:gen} with the construction of a super-solution.
\begin{lem}\label{gensuper}
For any $C >0$, the function $w(t,x)=\min\left\{1,C e^{-\frac{c_+}{2d_+}(x-c_+t)}\right\}$ is a super-solution of~\eqref{kpp}.
\end{lem}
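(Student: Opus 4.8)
The plan is to verify directly that $w(t,x)=\min\{1, Ce^{-\frac{c_+}{2d_+}(x-c_+t)}\}$ satisfies the super-solution inequality $\partial_t w - \chi(t,x)\partial_x^2 w - \alpha w(1-w) \geq 0$ in the appropriate (weak/viscosity) sense. Since $w$ is a minimum of two smooth functions, I would split the analysis according to which branch is active. Where $w\equiv 1$, the constant function $1$ is a (super-)solution of the homogeneous equation because $\alpha\cdot 1\cdot(1-1)=0$ and the diffusion and time-derivative terms vanish, so the inequality holds with equality. Where the exponential branch $\varphi(t,x):=Ce^{-\lambda(x-c_+t)}$ is active (with $\lambda:=\frac{c_+}{2d_+}$), I would plug it in and use the reaction bound $\alpha w(1-w)\leq \alpha w$, which is the key Fisher-KPP linearization step.

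Concretely, the heart of the computation is that on the exponential branch one has $\partial_t\varphi = \lambda c_+\varphi$ and $\partial_x^2\varphi = \lambda^2\varphi$, so
\begin{equation*}
\partial_t \varphi - \chi\,\partial_x^2\varphi - \alpha\varphi(1-\varphi) \geq \partial_t\varphi - d_+\,\partial_x^2\varphi - \alpha\varphi = \left(\lambda c_+ - d_+\lambda^2 - \alpha\right)\varphi,
\end{equation*}
where I used $\chi\leq d_+$ together with $\partial_x^2\varphi\geq 0$ to replace $\chi$ by $d_+$, and dropped the favorable $+\alpha\varphi^2$ term. With $\lambda=\frac{c_+}{2d_+}$ and $c_+=2\sqrt{d_+\alpha}$, the quadratic $d_+\lambda^2-\lambda c_+ +\alpha$ is exactly the dispersion relation evaluated at its double root, so it equals zero and the right-hand side is $\geq 0$. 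This is the standard choice of the critical decay rate making the linear super-solution marginal.

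The one genuine subtlety, and the step I would treat most carefully, is the behavior at the interface where the two branches meet, i.e. the corner where $Ce^{-\lambda(x-c_+t)}=1$. There $w$ is only Lipschitz, not $C^2$, so the inequality cannot hold classically. The resolution is that the minimum of two super-solutions is again a super-solution (in the viscosity or distributional sense), which works in our favor precisely because at the corner the exponential branch has a convex kink: the second derivative of $w$ there picks up a nonnegative singular (Dirac) contribution from switching between branches, and since $\chi>0$ this only helps the inequality $\partial_t w\geq \chi\partial_x^2 w + \alpha w(1-w)$. I would state this via the elementary lemma that a pointwise minimum of super-solutions is a super-solution, so that no delicate matching of derivatives at the interface is needed. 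I would also note $0\leq w\leq 1$ so the reaction bound $\alpha w(1-w)\leq\alpha w$ is legitimate throughout. The main obstacle is thus purely the non-smoothness at the matching point rather than any real computation; everything else is the standard exponential-ansatz verification.
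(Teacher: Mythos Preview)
Your proposal is correct and follows exactly the paper's approach: check $N(1)=0$, check $N(\varphi)\geq 0$ on the exponential branch using $\chi\leq d_+$, $\partial_x^2\varphi\geq 0$, and the critical choice $\lambda=c_+/(2d_+)$ so that $\lambda c_+ - d_+\lambda^2 - \alpha=0$, then take the minimum. The paper does precisely this computation and simply asserts at the end that the minimum is a super-solution, without the corner discussion you add.

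One small slip in your corner discussion: the kink is \emph{concave}, not convex. At the interface the spatial derivative of $w$ jumps from $0$ (constant branch) down to $-\lambda\varphi<0$ (exponential branch), so $\partial_x^2 w$ carries a \emph{nonpositive} Dirac mass there, not a nonnegative one. This is still the favorable sign, since the operator involves $-\chi\,\partial_x^2 w$; equivalently, it is the minimum (not the maximum) of super-solutions that is again a super-solution. Your invocation of that lemma is correct---only the informal convex/nonnegative wording is inverted.
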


\begin{proof}
The proof is a simple calculation. Let 
\bqs
N(v):=\partial_t v- \chi (t,x) \partial_{x}^2 v - \alpha v(1-v).
\eqs
We compute $N(1)=0$ and, for any $C>0$,
\begin{align*}
N(C e^{-\lambda(x-ct)})&= C \left(\lambda c-d_+ \lambda^2-\alpha\right)e^{-\lambda(x-ct)}+ C \lambda^2(d_+ -\chi(t,x))e^{-\lambda(x-ct)}+\alpha C^2 e^{-2\lambda(x-ct)}\\
&> C \left(\lambda c-d_+ \lambda^2-\alpha\right)e^{-\lambda(x-ct)},
\end{align*}
as $d_+ -\chi(t,x) \geq 0$ for all $x\in\R$ and $t\geq 0$. Then letting $\lambda=\frac{c_+}{2d_+}$ and $c=c_+$ we get that $w(t,x)$ is a super-solution.
\end{proof}
Now, for any $0 \leq u_0 \leq 1$ a compactly supported and nontrival initial datum, one can find $C>0$ large enough so that $u_0 \leq \min \{ 1, C e^{-\frac{c_+}{2d_+} x } \}$. A consequence of the above lemma and the comparison principle is that we have
\bqs
\forall \, c > c_+, \quad \limsup_{t \to +\infty} \ \sup_{ x \geq ct} u(t,x) =0,
\eqs
for $u(t,x)$ solution of \eqref{kpp} from a compactly supported, nontrivial initial datum $0\leq u_0 \leq 1$. 
This proves the first assertion of Proposition~\ref{prop:gen}.

\subsection{A general sub-solution}

We now construct a sub-solution and conclude the proof of Proposition~\ref{prop:gen}. For each $c\in(0,c_-)$, there exist $\delta>0$ small enough and $\eta>0$ such that
\bqs
0<c<2\sqrt{d_-(\alpha-\delta)}<c_-<2\sqrt{d_+(\alpha-\delta)}<c_+,
\eqs 
and
\bqs
(\alpha-\delta)u<\alpha u(1-u), \quad \forall u \in(0,\eta).
\eqs

We first set
\bqs
\lambda := \frac{c}{2d_-}, \quad \text{ and } \quad \beta := \frac{1}{2d_-}\sqrt{4d_-(\alpha-\delta)-c^2}>0,
\eqs
together with
\bqs
\gamma := \frac{c}{2d_+}, \quad \text{ and } \quad \omega := \frac{1}{2d_+}\sqrt{4d_+(\alpha-\delta)-c^2}>0.
\eqs
Then, the functions $\Psi_-(z):=e^{-\lambda z}\cos(\beta z)$ and $\Psi_+(z):=e^{-\gamma z}\cos(\omega z)$ are respectively solutions of
\bqs
d_- \Psi'' +c\Psi'+(\alpha-\delta)\Psi=0,
\eqs
and
\bqs
d_+ \Psi'' +c\Psi'+(\alpha-\delta)\Psi=0.
\eqs
We consider the intervals $\Omega_-=\left(-\frac{\pi}{2\beta},\frac{\pi}{2\beta}\right)$ and $\Omega_+=\left(-\frac{\pi}{2\omega},\frac{\pi}{2\omega}\right)$, and we note that $\Psi_\pm > 0$ on $\Omega_\pm$. We denote $z^*_\pm \in \Omega_\pm$ the unique values where $\Psi''_\pm(z^*_\pm)=0$, which are given by
\bqs
z^*_-=-\frac{1}{\beta}\mathrm{arctan}\left( \frac{\lambda^2-\beta^2}{2\lambda \beta}\right), \quad \text{ and } \quad z^*_+=-\frac{1}{\omega}\mathrm{arctan}\left( \frac{\gamma^2-\omega^2}{2\gamma \omega}\right).
\eqs

\begin{figure}[!t]
\centering
\includegraphics[width=0.49\textwidth]{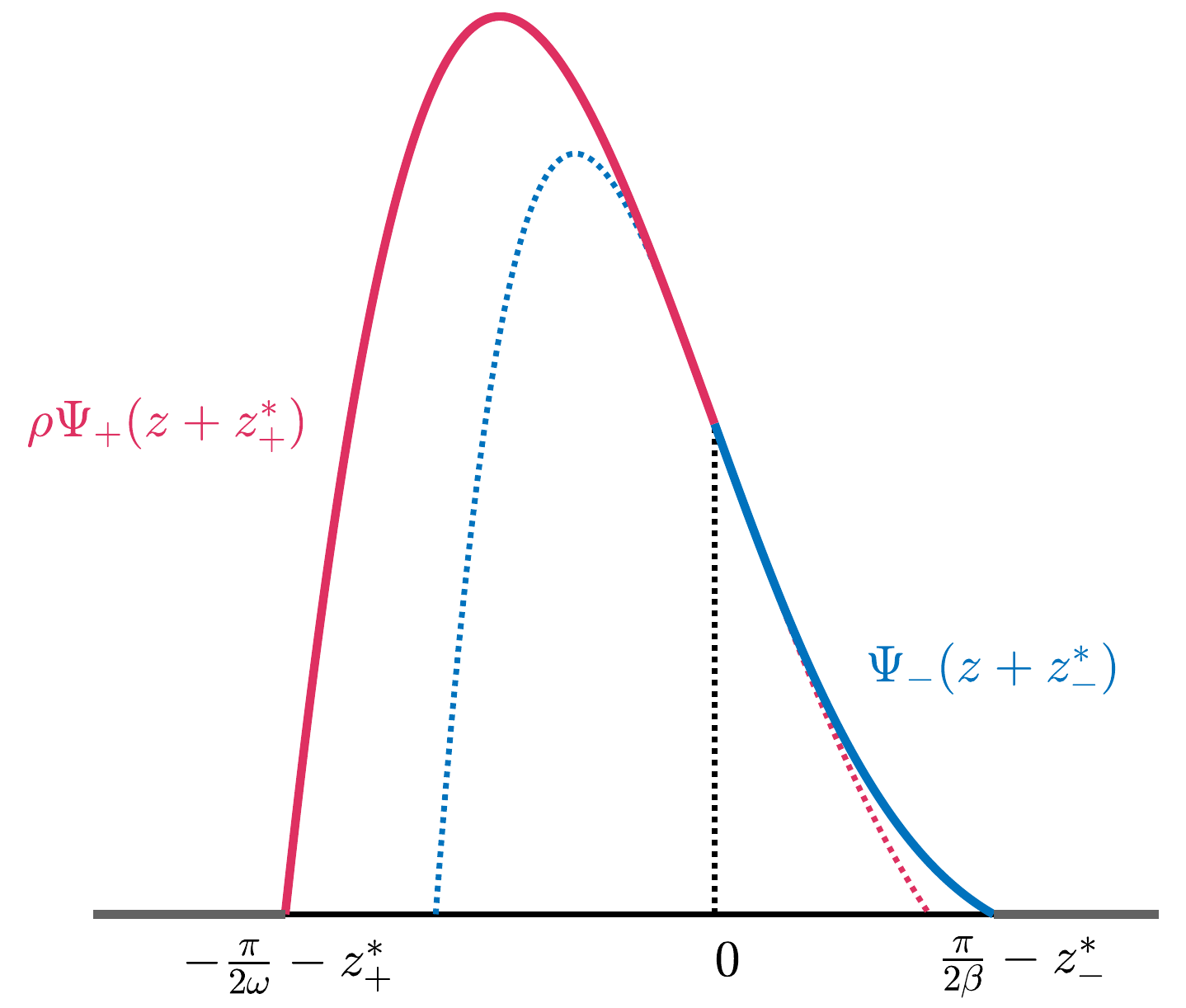}
\caption{Illustration of the building block of the general sub-solution \eqref{gensubsoleq} (before its scaling by~$\epsilon$) which is composed of two parts $\rho \Psi_+$ (pink curve) and $\Psi_-$ (blue curve) in the moving frame $z=x-ct$. It is of class $\mathscr{C}^2$ and compactly supported on $\left[-\frac{\pi}{2\omega}-z_+^*,\frac{\pi}{2\beta}-z_-^*\right]$.}
\label{fig:GenSubSol}
\end{figure}

Finally, we introduce the function
\bqq
\underline{u}_{c,\delta,\epsilon}(t,x)=\left\{
\begin{array}{lc}
0,& x-ct \leq -\frac{\pi}{2\omega}-z_+^*,\\
\epsilon \rho \Psi_+(x-ct+z_+^*), & -\frac{\pi}{2\omega}-z_+^*<x-ct\leq 0,\\
\epsilon \Psi_-(x-ct+z_-^*), & 0<x-ct<\frac{\pi}{2\beta}-z_-^*,\\
0,& x-ct\geq \frac{\pi}{2\beta}-z_-^*.
\end{array}
\right.
\label{gensubsoleq}
\eqq
Here, $\rho>0$ is chosen so as to ensure continuity at $x-ct=0$, that is
\bqs
\rho:=\frac{\Psi_-(z_-^*)}{\Psi_+(z_+^*)}.
\eqs
We fix $\epsilon>0$ small enough such that 
\bqs
0 \leq \underline{u}_{c,\delta,\epsilon}(t,x) < \eta, \quad \forall t\geq0, \, x\in\R.
\eqs
We now check that $\underline{u}_{c,\delta,\epsilon}(t,x)$ is a generalized sub-solution.

\begin{itemize}
\item For $z=x-ct\in\left(-\frac{\pi}{2\omega}-z_+^*,0\right] $, one has $0<\epsilon \rho \Psi_+(z+z_+^*)< \eta$, and so 
\bqs
N(\epsilon \rho \Psi_+(z+z_+^*))<(d_+-\chi(t,x))\epsilon \rho \Psi_+''(z+z_+^*)\leq 0,
\eqs
as $\Psi_+''(z+z_+^*)\leq 0$ on $\left(-\frac{\pi}{2\omega}-z_+^*,0\right]$ and $\chi(t,x)\leq d_+$.
\item For $z=x-ct\in\left(0,\frac{\pi}{2\beta}-z_-^*\right)$, one has 
$0<\epsilon  \Psi_-(z+z_-^*)< \eta$, and so 
\bqs
N(\epsilon \Psi_-(z+z_-^*))<(d_--\chi(t,x))\epsilon \rho \Psi_-''(z+z_-^*)\leq 0,
\eqs
as $\Psi_-''(z+z_-^*)\geq 0$ on $\left(0,\frac{\pi}{2\beta}-z_-^*\right)$ and $d_-\leq \chi(t,x)$.
\item At $z=x-ct=0$, we already have continuity since $\rho\Psi_+(z_+^*)=\Psi_-(z_-^*)$, and we also have that $\Psi_+''(z_+^*)=\Psi_-''(z_-^*)=0$ by definition of $z_\pm^*$. Now using the equations satisfied by $\Psi_\pm$ evaluated at $z_\pm^*$, we also obtain
\bqs
c\left( \Psi_-'(z_-^*)-\rho\Psi_+'(z_+^*)\right)=-(\alpha -\delta)\left( \Psi_-(z_-^*)-\rho\Psi_+(z_+^*)\right)-d_-\Psi_-''(z_-^*)+d_+\rho\Psi_+''(z_+^*)=0.
\eqs
As a consequence, we have $\Psi_-'(z_-^*)=\rho\Psi_+'(z_+^*)$ and the sub-solution is of class $\mathscr{C}^2$ for $x-ct\in \left(-\frac{\pi}{2\omega}-z_+^*,\frac{\pi}{2\beta}-z_-^* \right)$.
\end{itemize}
We have now reached the next result.
\begin{lem}\label{gensub}
Let $c\in(0,c_-)$. Then there exists $\delta_0(c)>0$ such that for each $0<\delta<\delta_0(c)$ one can find $\epsilon_0(c,\delta)>0$  such that for all $0<\epsilon<\epsilon_0(c,\delta)$ the function
\bqs
\underline{u}_{c,\delta,\epsilon}(t,x)=\left\{
\begin{array}{lc}
0,& x-ct \leq -\frac{\pi}{2\omega}-z_+^*,\\
\epsilon \rho \Psi_+(x-ct+z_+^*), & -\frac{\pi}{2\omega}-z_+^*<x-ct\leq 0,\\
\epsilon \Psi_-(x-ct+z_-^*), & 0<x-ct<\frac{\pi}{2\beta}-z_-^*,\\
0,& x-ct\geq \frac{\pi}{2\beta}-z_-^*,
\end{array}
\right.
\eqs
is a generalized sub-solution where $\Psi_\pm$, $z_\pm^*$ and $\rho$ are defined as above. We refer to Figure~\ref{fig:GenSubSol} for an illustration.
\end{lem}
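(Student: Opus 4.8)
The plan is to verify that $\underline{u}_{c,\delta,\epsilon}$ satisfies the standard definition of a generalized sub-solution of \eqref{kpp}: it is continuous, it is a classical (in fact strict) sub-solution on the interior of each of the two pieces where it is positive, and at each interior junction the graph presents only \emph{upward} corners, meaning the left derivative never exceeds the right derivative --- the admissible direction for a sub-solution of the operator $N(v) := \partial_t v - \chi(t,x)\partial_x^2 v - \alpha v(1-v)$. First I would fix the thresholds. I choose $\delta_0(c)>0$ so small that $c<2\sqrt{d_-(\alpha-\delta)}$ for all $0<\delta<\delta_0(c)$, which makes $\beta$ and $\omega$ real and positive and secures the whole chain $0<c<2\sqrt{d_-(\alpha-\delta)}<c_-<2\sqrt{d_+(\alpha-\delta)}<c_+$; I take $\eta>0$ with $(\alpha-\delta)u<\alpha u(1-u)$ on $(0,\eta)$; and finally $\epsilon_0(c,\delta)>0$ small enough that $0\le\underline{u}_{c,\delta,\epsilon}<\eta$ everywhere, so that the linear reaction bound applies at every point of the support.

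The heart of the interior estimate is to use the two constant-coefficient ODEs to eliminate the first-order term. On the left piece $v=\epsilon\rho\Psi_+(x-ct+z_+^*)$, substituting $-c\Psi_+'=d_+\Psi_+''+(\alpha-\delta)\Psi_+$ into $N(v)$ gives
\begin{equation*}
N(v)=\epsilon\rho\,(d_+-\chi(t,x))\,\Psi_+''(x-ct+z_+^*)+\big[(\alpha-\delta)v-\alpha v(1-v)\big].
\end{equation*}
The bracketed reaction term is strictly negative because $0<v<\eta$, while the first term is $\le 0$ since $\chi\le d_+$ and the argument $x-ct+z_+^*$ ranges in $(-\tfrac{\pi}{2\omega},z_+^*]$, exactly the concave arc of $\Psi_+$ (where $\Psi_+''\le 0$) cut off at its inflection point. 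The symmetric computation on the right piece $v=\epsilon\Psi_-(x-ct+z_-^*)$ yields $N(v)=\epsilon(d_--\chi)\Psi_-''+[(\alpha-\delta)v-\alpha v(1-v)]$, now using $\chi\ge d_-$ together with $\Psi_-''\ge 0$ on the convex arc $(z_-^*,\tfrac{\pi}{2\beta})$; again both terms are $\le 0$, the reaction one strictly. This is where the design of the building block pays off: the high-diffusion profile is glued on the back through its concave arc and the low-diffusion profile on the front through its convex arc, precisely so that the uncontrolled sign of $d_\pm-\chi$ is paired with a compensating sign of $\Psi_\pm''$.

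It then remains to treat the three junctions. At the central point $x-ct=0$, continuity is built in by the choice $\rho=\Psi_-(z_-^*)/\Psi_+(z_+^*)$, and $C^1$-matching follows by evaluating each ODE at its inflection point: since $\Psi_\pm''(z_\pm^*)=0$, the ODEs reduce to $\Psi_\pm'(z_\pm^*)=-\tfrac{\alpha-\delta}{c}\Psi_\pm(z_\pm^*)$, whence $\rho\Psi_+'(z_+^*)=-\tfrac{\alpha-\delta}{c}\rho\Psi_+(z_+^*)=-\tfrac{\alpha-\delta}{c}\Psi_-(z_-^*)=\Psi_-'(z_-^*)$; as both second derivatives vanish there, the function is in fact $C^2$ across $x-ct=0$ and no corner obstruction arises. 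At the two outer edges $x-ct=-\tfrac{\pi}{2\omega}-z_+^*$ and $x-ct=\tfrac{\pi}{2\beta}-z_-^*$ the profile reaches $0$ with a one-sided derivative of the right sign --- positive on entering the support from the left (as $\Psi_+$ leaves $0$ increasing) and negative on leaving it on the right (as $\Psi_-$ returns to $0$ decreasing) --- so each of these is an admissible upward corner for a sub-solution. I expect the only genuine subtlety to lie in this junction bookkeeping: once the inflection-point gluing is seen to force $C^1$ (indeed $C^2$) regularity at the centre and the corner orientations at the edges are checked to point in the sub-solution-admissible direction, the interior inequalities are routine and the lemma follows.
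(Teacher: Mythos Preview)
Your proposal is correct and follows essentially the same route as the paper: the same interior computation exploiting the sign pairing of $d_\pm-\chi$ with $\Psi_\pm''$ on the concave/convex arcs, and the same ODE-at-the-inflection-point argument for the $C^2$ gluing at $x-ct=0$. You even supply the check at the two outer junctions (the admissible upward corners where the profile meets~$0$), which the paper leaves implicit.
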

With the above lemma, we can now conclude the proof of Proposition~\ref{prop:gen}. Let $u$ be the solution of the Cauchy problem starting from a compactly supported initial condition $0\leq u_0 \leq 1$, and fix $0 < c < c' < c_-$. By the strong maximum principle, it satisfies $u(1,\cdot)>0$ and, upon diminishing further $\epsilon$ in the previous lemma, one can ensure that 
\bqs
\underline{u}_{c',\delta,\epsilon}(1,\cdot) \leq u(1,\cdot) .
\eqs
By the comparison principle, we infer that
$$\inf_{t >1 } u(t,c' t) \geq \nu >0,$$
for some $\nu \in (0,1)$ which depends on~$c'$. By a straightforward symmetry argument, one may check that 
$\underline{u}_{c',\delta,\epsilon} (t,-x)$ is also a sub-solution of \eqref{kpp} and therefore we also get that
$$\inf_{ t > 1 } u(t,-c' t) \geq \nu >0.$$
Next, up to reducing $\nu$ we can also assume that
$$u(1,x) \geq \nu,$$
for all $x \in [-c',c']$. Therefore, applying a comparison principle on $\{ (t,x) \, | \ t \geq 1 \mbox{ and } x \in [-c' t,c't]\}$, with the constant $\nu$ as a sub-solution, we infer that actually
$$\liminf_{ t\to +\infty} \inf_{-c' t  \leq x \leq c' t} u (t,x) \geq \nu.$$
Now we prove the second assertion of Proposition~\ref{prop:gen}. We proceed by contradiction and, since $u \leq 1$, we assume that there exist sequences $t_n \to +\infty$ and $x_n \in [0 , ct_n]$ such that
$$\limsup_{n \to +\infty} u(t_n, x_n) < 1.$$
Up to extraction of a subsequence and by standard parabolic estimates, we have that the function $u(t_n+ t, x_n +x)$ converges as $n \to +\infty$ to an entire in time solution $u_\infty$ of
$$\partial_t u_\infty = \widetilde{\chi} (t,x) \partial_x^2 u_\infty + \alpha u_\infty (1 - u_\infty),$$
for some $\widetilde{\chi}$ which also satisfies $d_-\leq \widetilde{\chi} \leq d_+$. Moreover, it follows from the above where $c' >c$ that $u_\infty (t,x) \geq \nu>0$ for all $(t,x) \in \R^2$. By a straightforward comparison with the ODE and regardless of the actual function $\widetilde{\chi}$, the function~$u_\infty$ must be identical to 1, which is a contradiction with the fact that $u_\infty( 0,0) < 1$ by construction. This proves the last statement of Proposition~\ref{prop:gen}.

\section{Construction of super and sub-solutions for case \eqref{ass:chi_dec} and proof of Theorem~\ref{theo:chi_dec}}\label{sec:dec}

In this section, we construct super and sub-solutions in case where $\chi$ satisfies assumption \eqref{ass:chi_dec}. We proceed step by step and consider each case depicted in Theorem~\ref{theo:chi_dec}.

\subsection{Subcase $c_{het}<c_-$}

When $c_{het}<c_-$, we need to prove that the rightward spreading speed is $c_u^*=c_-$. From Proposition~\ref{prop:gen}, we already have proved that 
$$\forall 0 < c <c_-, \quad \limsup_{t \to +\infty} \sup_{0 \leq x \leq ct} |1 - u (t,x)| = 0,$$
and thus it remains to provide a super-solution in this case.

We let $c>c_-$. Then, one can find $\epsilon>0$ small enough such that we have $c_-=2 \sqrt{d_- \alpha}<2 \sqrt{(d_-+\epsilon) \alpha}< \min(c,c_+)$. Next we fix $\tau_\epsilon>0$ to be large enough such that for any $\tau \geq \tau_\epsilon$, we have
\bqs
d_-\leq\chi(\tau)\leq d_-+\epsilon.
\eqs
\begin{lem}
Let $c_\epsilon:=2 \sqrt{(d_-+\epsilon) \alpha}$. For all $\tau \geq \tau_\epsilon$, the following function
\bqs
u_\tau(t,x)=\min\left\{ 1, e^{-\frac{c_\epsilon}{2\chi(\tau)}(x-c_\epsilon t-\tau)}\right\}, \quad t\geq0, \quad x\in\R,
\eqs
is a super-solution for \eqref{eq:main}.
\end{lem}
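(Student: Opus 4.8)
The plan is to recognize $u_\tau = \min\{1,\phi\}$ with $\phi(t,x) := e^{-\lambda(x-c_\epsilon t - \tau)}$ and $\lambda := \frac{c_\epsilon}{2\chi(\tau)}$ as a \emph{generalized} super-solution, built from two competing super-solutions. Writing $N(v):=\partial_t v - \chi(x-c_{het}t)\partial_x^2 v - \alpha v(1-v)$, the constant piece is handled instantly by $N(1)=0$, so everything reduces to checking that $\phi$ is a super-solution on the region where it actually realizes the minimum, namely $\{\phi<1\}=\{x>c_\epsilon t+\tau\}$. Once both pieces are super-solutions on their respective active regions, I would invoke the standard fact that the pointwise minimum of two super-solutions is a generalized super-solution: taking the minimum only introduces downward-pointing corners (at $x=c_\epsilon t+\tau$, where the derivative jumps from $0$ to a negative value), which are harmless for the comparison principle.

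The key localization observation is that on $\{\phi<1\}$ the shifting variable $z:=x-c_{het}t$ satisfies $z > c_\epsilon t + \tau - c_{het}t = \tau + (c_\epsilon-c_{het})t \geq \tau$, where I use $c_{het}<c_-<c_\epsilon$ and $t\geq 0$. Since $\chi$ is nonincreasing in case \eqref{ass:chi_dec} and $\tau\geq\tau_\epsilon$, this pins down $d_-\leq \chi(z)\leq \chi(\tau)\leq d_-+\epsilon$ on precisely the region that matters. In other words, although the coefficient of \eqref{eq:main} is genuinely heterogeneous, on the support of the active exponential tail it is dominated by the frozen value $\chi(\tau)$ used to define $\lambda$.

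With this in hand the verification is a direct computation: $N(\phi) = \phi\big[\lambda c_\epsilon - \chi(z)\lambda^2 - \alpha + \alpha\phi\big] \geq \phi\big[\lambda c_\epsilon - \chi(z)\lambda^2 - \alpha\big]$, the last step because $\alpha\phi>0$. I would then split the bracket as
\[
\lambda c_\epsilon - \chi(z)\lambda^2 - \alpha = \big(\lambda c_\epsilon - \chi(\tau)\lambda^2 - \alpha\big) + \big(\chi(\tau) - \chi(z)\big)\lambda^2 .
\]
The second term is nonnegative by the localization above. For the first, the choice $\lambda=\frac{c_\epsilon}{2\chi(\tau)}$ yields $\lambda c_\epsilon - \chi(\tau)\lambda^2 = \frac{c_\epsilon^2}{4\chi(\tau)}$, so that, using $c_\epsilon^2 = 4(d_-+\epsilon)\alpha$ together with $\chi(\tau)\leq d_-+\epsilon$, the first term equals $\frac{(d_-+\epsilon)\alpha}{\chi(\tau)} - \alpha \geq 0$. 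Hence $N(\phi)\geq 0$ on all of $\{\phi<1\}$, which is exactly what is required.

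The only genuine subtlety—and the step I would flag as the crux—is this localization: the frozen-coefficient ansatz works \emph{only} because the active exponential lives to the right of $z=\tau$, and this in turn relies on the strict ordering $c_{het}<c_\epsilon$. Indeed, to the left of $z=\tau$ one could have $\chi(z)$ as large as $d_+$, making the correction term $(\chi(\tau)-\chi(z))\lambda^2$ negative and the bound fail; it is the truncation by $1$ that excises this bad region. Everything else is an elementary check, and combining the lemma with the comparison principle (choosing $\tau$ large enough that $u_\tau(0,\cdot)\equiv 1$ on the support of $u_0$, hence $u_\tau(0,\cdot)\geq u_0$) and $c_\epsilon<c$ gives $\limsup_{t\to+\infty}\sup_{x\geq ct}u(t,x)=0$ for every $c>c_-$, closing the upper bound in the subcase $c_{het}<c_-$.
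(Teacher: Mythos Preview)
Your proposal is correct and follows essentially the same approach as the paper: both rely on the localization $x-c_{het}t\geq (c_\epsilon-c_{het})t+\tau\geq\tau$ on the active region $\{x>c_\epsilon t+\tau\}$, then exploit the monotonicity of $\chi$ together with $\chi(\tau)\leq d_-+\epsilon$ to conclude $\lambda c_\epsilon-\chi(\tau)\lambda^2-\alpha\geq 0$ for $\lambda=\frac{c_\epsilon}{2\chi(\tau)}$. Your additional remarks on the corner at $x=c_\epsilon t+\tau$ and on why the localization is the crux are helpful commentary but do not change the argument.
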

\begin{proof}
We recall the functional $N$ defined as
\bqs
N(u)=\partial_t u-\chi(x-c_{het}t)\partial_x^2u-\alpha u (1-u).
\eqs
We readily have that $N(1)=0$ and that
\bqs
N\left(e^{-\lambda(x-c_\epsilon t-\tau)}\right)\geq (\lambda c_\epsilon-\chi(x-c_{het}t)\lambda^2-\alpha)e^{-\lambda(x-c_\epsilon t-\tau)},
\eqs
for all $\lambda>0$. Next, we note that for each $x\geq c_\epsilon t+\tau$, we have
\bqs
x-c_{het}t\geq \underbrace{(c_\epsilon-c_{het})}_{>0}t+\tau \geq \tau.
\eqs
As a consequence, for all $x\geq c_\epsilon t+\tau$, using the monotonicity of $\chi$, we get
\bqs
N\left(e^{-\lambda(x-c_\epsilon t-\tau)}\right)\geq (\lambda c_\epsilon-\chi(\tau)\lambda^2-\alpha)e^{-\lambda(x-c_\epsilon t-\tau)}.
\eqs
Now evaluating at $\lambda=\frac{c_\epsilon}{2\chi(\tau)}$, we obtain
\bqs
N\left(e^{-\frac{c_\epsilon}{2\chi(\tau)}(x-c_\epsilon t-\tau)}\right)\geq \frac{c_\epsilon^2-4\chi(\tau)\alpha}{4\chi(\tau)}e^{-\frac{c_\epsilon}{2\chi(\tau)}(x-c_\epsilon t-\tau)}, \quad x\geq c_\epsilon t+\tau.
\eqs
Finally, as $\tau \geq \tau_\epsilon$, we get 
\bqs
c_\epsilon^2-4\chi(\tau)\alpha \geq c_\epsilon^2-4(d_-+\epsilon)\alpha = 0,
\eqs
which concludes the proof.
\end{proof}

As a consequence of the above lemma and the comparison principle, we have that
\bqs
\forall c > c_->c_{het}, \quad \limsup_{t \to +\infty} \ \sup_{ x \geq ct} u(t,x) =0,
\eqs
for $u(t,x)$ solution of \eqref{eq:main} from a compactly supported, nontrivial initial datum $0\leq u_0 \leq 1$. And thus, the rightward spreading speed of \eqref{eq:main} is less than or equal to $ c_-$ when $c_{het}<c_-$ which concludes the proof of Theorem~\ref{theo:chi_dec} in this case.

\subsection{Subcase $c_+<c_{het}$}

When $c_+<c_{het}$, we need to prove that the rightward spreading speed is $c_u^*=c_+$. From Proposition~\ref{prop:gen}, we already have that the spreading speed is less  than or equal to $ c_+$, in the sense that
$$\forall c > c_+, \quad \limsup_{t \to +\infty} \sup_{x \geq ct }  u(t,x)= 0,$$
and thus it remains to provide a sub-solution in this case.

%
%
%


As a matter of fact this sub-solution is valid in all subcases of~\eqref{ass:chi_dec}, and hereafter we simply let $0<c< \min\left(c_+,c_{het}\right)$. Then, one can find $\epsilon>0$ such that $c<2\sqrt{d_+(\alpha-2\epsilon)}<c_+$ together with $\eta_\epsilon>0$ such that
\bqs
(\alpha-\epsilon)u\leq \alpha u(1-u), \quad u\in[0,\eta_\epsilon].
\eqs
We introduce two positive real numbers
\bqs
\beta_+(\epsilon,c):= \frac{\sqrt{4d_+(\alpha-\epsilon)-c^2}}{2d_+}>0, \text{ and } \beta_+(c):=\beta_+(0,c)= \frac{\sqrt{4d_+\alpha-c^2}}{2d_+}>0,
\eqs
together with the following family of functions
\bqs
u_{\tau,\epsilon}^c(t,x):=\left\{
\begin{array}{lc}
\delta_\epsilon \left[ e^{-\frac{c}{2d_+}(x-ct+\tau)}\cos(\beta_+(\epsilon,c)(x-ct+\tau)) +\epsilon \right], & x-ct+\tau \in\Omega_\epsilon(c), \\
0, & \text{ otherwise,}
\end{array}
\right.
\eqs
with $\Omega_\epsilon(c)=\left[-\frac{\pi}{2\beta_+(\epsilon,c)}-z^-_\epsilon(c),\frac{\pi}{2\beta_+(\epsilon,c)}+z^+_\epsilon(c)  \right]$.
Here $\delta_\epsilon>0$ is fixed such that $0\leq u_{\tau,\epsilon}^c(t,x)\leq \eta_\epsilon$ for all $x-ct+\tau \in\Omega_\epsilon(c)$ and can be chosen independent of $c< 2 \sqrt{d_+ (\alpha - 2 \epsilon)}$.  Furthermore, $z^\pm_\epsilon(c)$ are defined through
\bqs
e^{\mp \frac{c}{2d_+}\left( \frac{\pi}{2\beta_+(\epsilon,c)}+z^\pm_\epsilon(c)\right)}\sin(\beta_+(\epsilon,c)z^\pm_\epsilon(c))=\epsilon,
\eqs
with asymptotics
\bqs
z^\pm_\epsilon(c)=\frac{e^{\mp \dfrac{c\pi}{4d_+\beta_+(c)}}}{\beta_+(c)} \epsilon+o(\epsilon), \quad \text{ as } \epsilon \rightarrow0.
\eqs
 
 We first remark that when $x-ct+\tau \in\Omega_\epsilon(c)$, we have
\bqs
x-c_{het}t \in \left[-\frac{\pi}{2\beta_+(\epsilon,c)}-z^-_\epsilon(c)+(c-c_{het})t-\tau,\frac{\pi}{2\beta_+(\epsilon,c)}+z^+_\epsilon(c)+(c-c_{het})t-\tau  \right].
\eqs
Next, as $\chi(-\infty)=d_+$, there exists $A>0$ such that for all $\xi\leq -A$, we get
\bqs
|\chi(\xi)-d_+|\leq \epsilon^2.
\eqs
As a consequence, for all $$\tau>\tau_\epsilon(c):=A+ \frac{\pi}{2\beta_+(\epsilon,c)}+z^+_\epsilon(c),$$ we have $x-c_{het}t\leq-A$ and
\bqs
\left| \left(d_+ -\chi(x-c_{het}t)\right) \partial_x^2 u_{\tau,\epsilon}^c(t,x) \right| \leq \epsilon^2 \delta_\epsilon \left(\frac{c}{2d_+}+\beta_+(\epsilon,c) \right)^2 e^{ \frac{c}{2d_+}\left( \frac{\pi}{2\beta_+(\epsilon,c)}+z^-_\epsilon(c)\right)}:=\epsilon^2 \delta_\epsilon K_\epsilon(c),
\eqs
for all $x-ct+\tau \in\Omega_\epsilon(c)$. 
This implies that for all $\tau>\tau_\epsilon(c)$
\begin{align*}
N(u_{\tau,\epsilon}^c(t,x)) &\leq \left(d_+ -\chi(x-c_{het}t)\right) \partial_x^2 u_{\tau,\epsilon}^c(t,x)-\delta_\epsilon \epsilon(\alpha-\epsilon)\\
& \leq \epsilon \delta_\epsilon \left[(K_\epsilon(c)+1)\epsilon -\alpha\right], \quad x-ct+\tau \in\Omega_\epsilon(c).
\end{align*}
 Notice that 
$$\liminf_{\epsilon \to 0} K_\epsilon (c) >0.$$
As a consequence, we obtain the following lemma.
\begin{lem}\label{lemsubcmincpchet}
Let $0<c< \min\left(c_+,c_{het}\right)$. There is $\epsilon_0(c)>0$ such that for all $\epsilon\in (0,\epsilon_0(c))$, the function $u_{\tau,\epsilon}^c$ is a sub-solution for all $\tau>\tau_\epsilon(c)$.
\end{lem}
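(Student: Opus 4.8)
The plan is to confirm that $u_{\tau,\epsilon}^c$ is a generalized sub-solution of \eqref{eq:main} by treating separately the smooth interior of its support and the two endpoints where it is glued to the trivial state $0$. On the region where $u_{\tau,\epsilon}^c \equiv 0$ we trivially have $N(0)=0$, so only these two aspects require attention.

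On the interior of the support the needed estimate has already been obtained above: for every $\tau > \tau_\epsilon(c)$ one has $N(u_{\tau,\epsilon}^c) \leq \epsilon\delta_\epsilon\big[(K_\epsilon(c)+1)\epsilon - \alpha\big]$, the point being that $c < c_{het}$ forces $(c-c_{het})t \leq 0$ so that the support stays in the half-line $x - c_{het}t \leq -A$ where $|\chi - d_+| \leq \epsilon^2$. It then suffices to fix $\epsilon_0(c)$ making the bracket negative. Since $\beta_+(\epsilon,c) \to \beta_+(c) > 0$ and $z^-_\epsilon(c) \to 0$ as $\epsilon \to 0$, the quantity $K_\epsilon(c)$ converges to the finite limit $\big(\tfrac{c}{2d_+} + \beta_+(c)\big)^2 \exp\!\big(\tfrac{c\pi}{4d_+\beta_+(c)}\big)$ and in particular stays bounded; hence $(K_\epsilon(c)+1)\epsilon - \alpha \to -\alpha < 0$, and there is $\epsilon_0(c)>0$ so that for $\epsilon \in (0,\epsilon_0(c))$ we obtain $N(u_{\tau,\epsilon}^c) < 0$ strictly in the interior.

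The remaining and most delicate point is the matching at the two endpoints $z_L := -\tfrac{\pi}{2\beta_+(\epsilon,c)} - z^-_\epsilon(c)$ and $z_R := \tfrac{\pi}{2\beta_+(\epsilon,c)} + z^+_\epsilon(c)$, where $z := x - ct + \tau$. Continuity is built into the construction: the defining relations for $z^\pm_\epsilon(c)$ exactly cancel the constant offset $\epsilon$, so $u_{\tau,\epsilon}^c$ vanishes there. For the glueing to produce a sub-solution one must check that the spatial derivative jumps \emph{upward} across each corner, so that the distributional $\partial_x^2 u_{\tau,\epsilon}^c$ carries a \emph{nonnegative} Dirac mass and, since $\chi > 0$, the term $-\chi\,\partial_x^2 u_{\tau,\epsilon}^c$ contributes a nonpositive singular part that only reinforces $N \leq 0$. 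A direct computation of the one-sided derivatives gives, at $z_L$, an inner slope $\delta_\epsilon e^{-\frac{c}{2d_+}z_L}\big[\tfrac{c}{2d_+}\sin(\beta_+ z^-_\epsilon) + \beta_+\cos(\beta_+ z^-_\epsilon)\big] > 0$ against an outer slope $0$, and at $z_R$ an inner slope $-\delta_\epsilon e^{-\frac{c}{2d_+}z_R}\big[\beta_+\cos(\beta_+ z^+_\epsilon) - \tfrac{c}{2d_+}\sin(\beta_+ z^+_\epsilon)\big] < 0$ against an outer slope $0$ (here $\beta_+ = \beta_+(\epsilon,c)$ and $z^\pm_\epsilon$ are small). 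In both cases the slope increases across the corner, which is exactly the convex-corner condition guaranteeing that a continuous, piecewise-$\mathscr{C}^2$ function is a generalized sub-solution.

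The main obstacle is precisely this endpoint analysis: because the additive $\epsilon$ displaces the zeros of the underlying damped cosine, one must verify carefully that the one-sided derivatives retain the signs above for small $\epsilon$. Once this is established, combining it with the interior estimate and the elementary $\epsilon \to 0$ limits yields the claim.
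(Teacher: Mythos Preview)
Your proof is correct and follows the same line as the paper: the interior estimate $N(u_{\tau,\epsilon}^c)\leq \epsilon\delta_\epsilon[(K_\epsilon(c)+1)\epsilon-\alpha]$ is exactly what the paper establishes just before stating the lemma, and your observation that $K_\epsilon(c)$ has a finite limit as $\epsilon\to 0$ (hence the bracket is eventually negative) is precisely the missing step. Your explicit verification of the convex-corner condition at the two endpoints of the support is an additional detail that the paper leaves implicit for generalized sub-solutions; it is correct and makes the argument self-contained.
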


Now, using similar arguments as in the proof of Proposition~\ref{prop:gen} we deduce that the solutions spread at least with speed $\min\left(c_+,c_{het}\right)$, which concludes the proof of Theorem~\ref{theo:chi_dec} in the subcase when $c_+ < c_{het}$.
 
\subsection{Subcase $c_-\leq c_{het} \leq c_+$}

When $c_-\leq c_{het} \leq c_+$, our main Theorem~\ref{theo:chi_dec} asserts that the selected rightward spreading speed is precisely the speed of the heterogeneity, that is $c_u^*=c_{het}$. In that case, we cannot rely on Proposition~\ref{prop:gen} to obtain either inequality, and we need to refine our analysis. Nevertheless, we can use Lemma~\ref{lemsubcmincpchet} to obtain in this subcase that the spreading speed is larger than or equal to $c_{het}=\min\left(c_+,c_{het}\right)$, and it only remains to construct a super-solution which spreads at speed $c_{het}$. This is precisely the result of the following lemma.

\begin{lem}\label{lem:subcase_c-_chet_c+}
Let $c_-\leq c_{het} \leq c_+$. Then, there is $\tau_0>0$ such that for all $\tau\geq\tau_0$ the function $\overline{u}_\tau(t,x)=\min\left\{1,e^{-\sqrt{\frac{c_{het}}{2\chi(\tau)}}(x-c_{het}t-\tau)}\right\}$ is a super-solution of~\eqref{eq:main}.
\end{lem}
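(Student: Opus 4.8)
The plan is to show that $\overline{u}_\tau$ is a generalized super-solution by checking that each of its two branches is a super-solution on the region where it is the active (smaller) one, and then invoking the standard fact that the minimum of two super-solutions is again a super-solution. Writing $N(u):=\partial_t u-\chi(x-c_{het}t)\partial_x^2 u-\alpha u(1-u)$ as in the previous lemmas, one immediately has $N(1)=0$, so the only real work concerns the exponential branch, which is the active one precisely on the tail region $\{x-c_{het}t\geq\tau\}$.

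On that region I would substitute $e^{-\lambda(x-c_{het}t-\tau)}$ with a decay rate $\lambda>0$ to be fixed. A direct computation, identical in spirit to the earlier super-solution lemmas, gives
\[
N\!\left(e^{-\lambda(x-c_{het}t-\tau)}\right)\geq\left(\lambda c_{het}-\chi(x-c_{het}t)\,\lambda^2-\alpha\right)e^{-\lambda(x-c_{het}t-\tau)},
\]
where the positive quadratic contribution $\alpha e^{-2\lambda(\cdots)}$ has been discarded since it is favorable. The key point is then to exploit the monotonicity from \eqref{ass:chi_dec}: since $\chi'\leq0$, for every $z=x-c_{het}t\geq\tau$ one has $\chi(z)\leq\chi(\tau)$, hence $-\chi(z)\lambda^2\geq-\chi(\tau)\lambda^2$, and it suffices to arrange $\lambda c_{het}-\chi(\tau)\lambda^2-\alpha\geq0$. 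Choosing $\lambda=\frac{c_{het}}{2\chi(\tau)}$, the vertex of this downward parabola in $\lambda$, turns the requirement into $\frac{c_{het}^2}{4\chi(\tau)}-\alpha\geq0$, that is $\chi(\tau)\leq\frac{c_{het}^2}{4\alpha}$.

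It then remains to produce $\tau_0$ so that this holds for all $\tau\geq\tau_0$. Here I would use that $c_{het}\geq c_-=2\sqrt{d_-\alpha}$ yields $\frac{c_{het}^2}{4\alpha}\geq d_-$, together with $\chi(\tau)\to d_-$ as $\tau\to+\infty$ from \eqref{ass:chi_dec}: when $c_{het}>c_-$ the target $\frac{c_{het}^2}{4\alpha}$ lies strictly above $d_-$, so $\chi(\tau)\leq\frac{c_{het}^2}{4\alpha}$ for all large $\tau$, and $\tau_0$ is fixed accordingly. (Note that the bound $c_{het}\leq c_+$ is not needed for the super-solution itself; it enters only when combining with Lemma~\ref{lemsubcmincpchet} to conclude $c_u^*=c_{het}$.) Finally, the junction at $x-c_{het}t=\tau$ is harmless: the graph switches there from the constant $1$ to the decreasing exponential, producing a downward (concave) corner whose distributional second derivative carries a negative Dirac mass; appearing through $-\chi\partial_x^2$ this contributes with the correct sign, so $\overline{u}_\tau$ is a super-solution in the generalized sense.

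The main obstacle, and really the only delicate point, is the endpoint $c_{het}=c_-$. There $\frac{c_{het}^2}{4\alpha}=d_-$, while $\chi(\tau)>d_-$ for every finite $\tau$ whenever $\chi$ is strictly decreasing and attains $d_-$ only in the limit; hence no exponential branch of the above form is an exact super-solution and the construction is genuinely borderline. I would treat this value separately, either by a limiting argument as $c_{het}\downarrow c_-$ or by observing that it is already covered by the analysis of the subcase $c_{het}<c_-$, so that the spreading speed is pinned to $c_-=c_{het}$ regardless. For every $c_{het}\in(c_-,c_+]$ the argument above is clean, which is exactly what is required to deduce that the rightward spreading speed is at most $c_{het}$.
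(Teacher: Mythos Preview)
Your argument is essentially the paper's own proof: check $N(1)=0$, then on the region $x-c_{het}t\geq\tau$ use the monotonicity of $\chi$ to replace $\chi(x-c_{het}t)$ by $\chi(\tau)$, and pick the decay rate $\lambda=\frac{c_{het}}{2\chi(\tau)}$ so that the remaining quantity is $\frac{c_{het}^2-4\alpha\chi(\tau)}{4\chi(\tau)}\geq0$ for $\tau$ large.

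Two remarks where you are in fact more careful than the paper. First, the exponent $\sqrt{\frac{c_{het}}{2\chi(\tau)}}$ in the lemma statement is visibly a typo: the paper's own displayed computation in the proof uses $\frac{c_{het}^2-4\alpha\chi(\tau)}{4\chi(\tau)}$, which only arises from $\lambda=\frac{c_{het}}{2\chi(\tau)}$ without the square root, exactly as you wrote. Second, you correctly flag the endpoint $c_{het}=c_-$: there one needs $\chi(\tau)\leq d_-$, which under \eqref{ass:chi_dec} need not hold for any finite~$\tau$ unless $\chi$ reaches $d_-$. The paper's proof glosses over this, but the endpoint is harmless for Theorem~\ref{theo:chi_dec} since the upper bound $c_u^*\leq c_-=c_{het}$ at that value is already furnished by the super-solution of the subcase $c_{het}<c_-$ (whose construction only used $c_{het}<c_\epsilon$ and thus works for any $c>c_-$).
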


\begin{proof}
As $c_- \leq c_{het}$ and $\chi(x)\rightarrow d_-$ as $x\rightarrow+\infty$, there exists $\tau_0>0$ such that for all $\tau\geq\tau_0$ one has
\bqs
c_{het}^2-4\alpha \chi(\tau) \geq 0.
\eqs
Next, we have for $\tau\leq x-c_{het}t$ that $\chi (x - c_{het} t) \leq \chi (\tau)$ and 
\bqs
N(\overline{u}_\tau(t,x))\geq \frac{c_{het}^2-4\alpha \chi(\tau)}{4\chi(\tau)}e^{-\sqrt{\frac{c_{het}}{2\chi(\tau)}}(x-c_{het}t-\tau)} \geq 0.
\eqs
This already concludes the proof.
\end{proof}

\section{Construction of super and sub-solutions in case \eqref{ass:chi_inc} and proof of Theorem~\ref{theo:chi_inc}}\label{sec:inc}

In this section, we construct super and sub-solutions in the case where $\chi$ satisfies assumption \eqref{ass:chi_inc}. We first treat the most difficult case when $c_{het} \in \left( c_+,c_{int} \right)$ and then explain how to deal with the remaining two cases.

\subsection{Case $c_{het} \in \left[ c_+,c_{int} \right)$}\label{sec:subcase_chi_inc}

Recall from the discussion in Section~\ref{sec:case_ass_chi_inc} that
$$g(c) = c_{het}^2 - 4 d_+ \left[ \alpha  + \frac{c - \sqrt{c^2 - 4  d_- \alpha}}{2 d_-} (c_{het} - c) \right],$$
and since $c_{het}\in \left[ c_+, c_{int} \right)$, there exists a unique $c \in (c_-, c_+ ]$ such that $g(c)=0$. For convenience, throughout Section~\ref{sec:subcase_chi_inc} we will denote it by $c_u^*$. Our goal is indeed to prove that this $c_u^*$, whose explicit formula is given in Theorem~\ref{theo:chi_inc}, is the spreading speed of the solution in that case.

\subsubsection{Super-solution}

According to Proposition~\ref{prop:gen}, we already know that the spreading is less than or equal to $c_+$. Thus to construct a super-solution here we only need to consider the case when $c_{het} \in (c_+,c_{int})$. Then recall that $c_u^* \in (c_- , c_+) $ is such that $g(c_u^*) =0$, and let any $c \in ( c_u^*, c_{het})$. We introduce the family of (continuous) functions 
\bqs
u_\tau(t,x)=C \times \left\{
\begin{array}{cl}
1, & x\leq c t-\tau,\\
e^{-\lambda(x-c t+\tau)}, & ct-\tau < x < c_{het}t-\tau,\\
e^{-\lambda(c_{het}-c)t}e^{-\mu(x-c_{het}t + \tau)}, & x\geq c_{het}t-\tau,
\end{array}
\right.
\eqs
where $\lambda$, $\mu$ and $\tau$ are positive and will be adjusted as follows, and $C\geq 1$ is also positive but arbitrary. 

\begin{itemize}
\item For $x\in \left( ct-\tau , c_{het}t-\tau \right)$, we compute
\begin{align*}
N(C e^{-\lambda(x-ct+\tau)})&= C \left(\lambda c -\lambda^2 \chi(x-c_{het}t)-\alpha\right)e^{-\lambda(x-c t+\tau)}+ C^2 \alpha e^{-2\lambda(x-c t+\tau)}\\
& \geq C \left(\lambda c_u^* -\lambda^2 d_--\alpha\right)e^{-\lambda(x-c t+\tau)} \\
& \qquad + C\left[\lambda^2(d_- -\chi(-\tau))  +\lambda (c - c_u^*) \right] e^{-\lambda(x-c t +\tau)}, 
\end{align*}
where we used the fact that $\chi$ is nondecreasing and $x-c_{het}t\leq -\tau$. We now select 
$$ \lambda = \lambda (c_u^*) ,$$
where $\lambda (c)$ denotes the smallest positive (thanks to $c > c_u^* > c_-$) solution of
\bqs
\lambda c -\lambda^2 d_--\alpha=0,
\eqs
that is 
\bqs
\lambda(c):=\frac{c-\sqrt{c^2-4d_-\alpha}}{2d_-}.
\eqs
There exists $\tau_0>0$ such that for all $\tau\geq\tau_0$ we have
\bqs
\lambda(c_u^*)^2(d_--\chi(-\tau))+  \lambda(c_u^*)(c - c_u^*)>0,
\eqs
hence $N (u_\tau (t,x) ) >0$ for $x \in (c t - \tau, c_{het} t - \tau)$.
\item For $x > c_{het}t-\tau$, we have that
\begin{align*}
N(C e^{-\lambda(c_{het}-c)t}e^{-\mu(x-c_{het}t+\tau)})&= C\left(-\lambda (c_{het}-c_u^*)+\mu c_{het}-\alpha-d_+\mu^2\right)e^{-\lambda(c_{het}-c)t}e^{-\mu(x-c_{het}t+\tau)}\\
&~~~+ C \left[ \mu^2(d_+-\chi(x-c_{het}t)) + \lambda (c - c_u^*) \right]e^{-\lambda(c_{het}-c)t}e^{-\mu(x-c_{het}t+\tau)}\\
&~~~+C^2 \alpha e^{-2\lambda(c_{het}-c)t}e^{-2\mu(x-c_{het}t+\tau)}.
\end{align*}
The last two terms are nonnegative as $\chi \leq d_+$ and we finally select $\mu>0$ such that
\bqs
-\lambda (c_{het}-c_u^* )+\mu c_{het}-\alpha-d_+\mu^2=0, \text{ with } \lambda=\lambda(c_u^*).
\eqs
That is, we let
\bqs
\mu:=\frac{c_{het}-\sqrt{g(c_u^*)}}{2d_+}=\frac{c_{het}}{2d_+},
\eqs
as $g(c_u^*)=0$, and we get that $N (u_\tau (t,x)) \geq 0$ for $x > c_{het} t - \tau$. We note that the above formula with $\lambda=\lambda(c_u^*)$ and $\mu=\frac{c_{het}}{2d_+}$ writes
\bqs
-\lambda(c_u^* )(c_{het}-c_u^*)=\alpha-\frac{c_{het}^2}{4d_+}=\lambda_\star.
\eqs
\item It remains to prove that there is a negative jump in the derivative at $x=c_{het}t-\tau$, that is
\bqs
0>\partial_x u_\tau(t,(c_{het}t-\tau)^-)>\partial_x u_\tau(t,(c_{het}t-\tau)^+).
\eqs
This is equivalent to show that $\lambda = \lambda(c_u^*)< \mu = \frac{c_{het}}{2d_+}$, where $c_u^* $ is such that $g(c_u^*)=0$. We claim that we have 
\bqq
\lambda(c_u^*)=\frac{c_{het}}{2d_-}-\frac{1}{2d_-}\sqrt{c_{het}^2-4d_-\alpha+4d_-\lambda_\star},
\label{eqlambdac}
\eqq
where $\lambda_\star = \alpha - \frac{c_{het}^2}{4d_+}$. Let us assume that the claim \eqref{eqlambdac} holds. Then, we obtain
\bqs
\lambda = \lambda(c_u^*)=\frac{c_{het}}{2d_-}\left(1-\sqrt{1-\frac{d_-}{d_+}}\right)<\frac{c_{het}}{2d_-}\left(1-\left(1-\frac{d_-}{d_+}\right)\right)=\frac{c_{het}}{2d_+} = \mu,
\eqs
since $0<1-\frac{d_-}{d_+}<1$ and we have verified the fact that there is a negative jump in the derivative at $x=c_{het}t-\tau$. Coming back to the formula \eqref{eqlambdac}, we first note that by definition of $\lambda(c_u^*)$ it solves
\bqs
d_- \lambda(c_u^*)^2-c_u^* \lambda(c_u^*)+\alpha =0,
\eqs
and thus it is also solution of
\bqs
d_- \lambda(c_u^*)^2-c_{het}\lambda(c_u^*)+\alpha =-\lambda(c_u^*)(c_{het}-c_u^*)=\lambda_\star.
\eqs
Finally, the fact that $\lambda(c_u^*)$ is given by \eqref{eqlambdac} (and not the other positive root) can be deduced by comparing the formula in the limiting case $c_{het}=c_+$.
\end{itemize}


\begin{lem}\label{lem:supsolanom} Let $c_+<c_{het}<c_{int}$. Let also $c_u^*$ be such that $g(c_u^*) =0$ and $\lambda(c):=\frac{c-\sqrt{c^2-4d_-\alpha}}{2d_-}$. Then for any $c > c_u^*$, there exists $\tau_0>0$ such that, for each $\tau\geq \tau_0$ and $C\geq 1$,
\bqs
u_\tau(t,x)= C \times \left\{
\begin{array}{cl}
1, & x\leq ct-\tau,\\
e^{-\lambda(c_u^*)(x-ct+\tau)}, & ct-\tau < x < c_{het}t-\tau,\\
e^{-\lambda(c_u^*)(c_{het}-c)t}e^{-\frac{c_{het}}{2d_+}(x-c_{het}t+\tau)}, & x\geq c_{het}t-\tau,
\end{array}
\right.
\eqs
is a super-solution of~\eqref{eq:main}. 
\end{lem}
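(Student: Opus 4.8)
The plan is to show that $u_\tau$ is a \emph{generalized} super-solution of \eqref{eq:main}, meaning that the operator $N(u):=\partial_t u-\chi(x-c_{het}t)\partial_x^2 u-\alpha u(1-u)$ satisfies $N(u_\tau)\geq 0$ classically on each of the three smooth pieces, that $u_\tau$ is globally continuous, and that at each of the two interfaces $x=ct-\tau$ and $x=c_{het}t-\tau$ the left derivative dominates the right derivative (a downward corner, which contributes a favorable positive Dirac mass through $-\chi\partial_x^2 u_\tau$). These three ingredients are exactly what the itemized computations preceding the statement supply, so my task is mainly to assemble them and to identify where each structural hypothesis is used. On the leftmost region $x\leq ct-\tau$ the function is the constant $C$, and $N(C)=\alpha C(C-1)\geq 0$ precisely because $C\geq 1$; nothing further is needed there.

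On the middle region $ct-\tau<x<c_{het}t-\tau$ I would substitute the ansatz $Ce^{-\lambda(c_u^*)(x-ct+\tau)}$ and organize $N(u_\tau)$ into three groups: the dispersion expression $\lambda(c_u^*)c_u^*-d_-\lambda(c_u^*)^2-\alpha$, which vanishes identically by the very definition of $\lambda(c_u^*)$ as a root of $d_-\lambda^2-c_u^*\lambda+\alpha=0$; a correction term $\lambda(c_u^*)^2\big(d_--\chi(x-c_{het}t)\big)+\lambda(c_u^*)(c-c_u^*)$; and the nonnegative quadratic reaction remainder coming from $\alpha u^2$. Here the two structural facts enter: since $x-c_{het}t<-\tau$ and $\chi$ is nondecreasing in case \eqref{ass:chi_inc}, monotonicity yields $\chi(x-c_{het}t)\leq\chi(-\tau)$, while $\chi(-\infty)=d_-$ forces $\chi(-\tau)\to d_-$ as $\tau\to+\infty$; and the \emph{detuning} $c>c_u^*$ produces the strictly positive contribution $\lambda(c_u^*)(c-c_u^*)$. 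Taking $\tau$ large therefore renders the correction term nonnegative, so $N(u_\tau)\geq 0$ on this piece.

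On the right region $x\geq c_{het}t-\tau$ the analogous substitution of $Ce^{-\lambda(c_u^*)(c_{het}-c)t}e^{-\mu(x-c_{het}t+\tau)}$ splits into the dispersion expression $-\lambda(c_u^*)(c_{het}-c_u^*)+\mu c_{het}-\alpha-d_+\mu^2$, a term $\mu^2\big(d_+-\chi(x-c_{het}t)\big)+\lambda(c_u^*)(c-c_u^*)$ that is nonnegative because $\chi\leq d_+$ and $c>c_u^*$, and a nonnegative reaction remainder; the choice $\mu=\frac{c_{het}}{2d_+}$ kills the dispersion expression precisely because $g(c_u^*)=0$ forces $-\lambda(c_u^*)(c_{het}-c_u^*)=\lambda_\star=\alpha-\frac{c_{het}^2}{4d_+}$, which is where the definition of $c_u^*$ is used. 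There remain the two corners. At $x=ct-\tau$ the profile passes from the constant $C$ (left derivative $0$) to a strictly decreasing exponential (negative right derivative), so the downward-corner inequality is immediate. The delicate interface is $x=c_{het}t-\tau$: continuity is built in, and the corner condition reduces exactly to $\lambda(c_u^*)<\mu=\frac{c_{het}}{2d_+}$, which I would obtain from the closed form \eqref{eqlambdac}, namely $\lambda(c_u^*)=\frac{c_{het}}{2d_-}\big(1-\sqrt{1-d_-/d_+}\big)$, together with the elementary bound $\sqrt{y}>y$ for $y\in(0,1)$ applied to $y=1-d_-/d_+$.

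I expect the main obstacle to be this interface at $x=c_{het}t-\tau$, since the super-solution property hinges entirely on the sign of the derivative jump, which in turn rests on the nonobvious evaluation \eqref{eqlambdac} of $\lambda(c_u^*)$. The genuinely subtle point is justifying that $\lambda(c_u^*)$ is the correct root of the auxiliary quadratic $d_-\lambda^2-c_{het}\lambda+\alpha-\lambda_\star=0$ rather than its spurious large counterpart; I would pin this down by continuity in the limiting case $c_{het}=c_+$, where $c_u^*=c_+$ and the formula can be verified directly. Once \eqref{eqlambdac} is secured, the inequality $\lambda(c_u^*)<\frac{c_{het}}{2d_+}$ follows cleanly, and all the regional estimates are routine given the dispersion relations and the freedom to enlarge $\tau$.
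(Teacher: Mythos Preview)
Your proposal is correct and follows essentially the same route as the paper: the itemized computations preceding the lemma are precisely the region-by-region verification you outline, with the same choice $\lambda=\lambda(c_u^*)$, $\mu=\frac{c_{het}}{2d_+}$, the same use of $g(c_u^*)=0$ via $-\lambda(c_u^*)(c_{het}-c_u^*)=\lambda_\star$, and the same argument for the corner inequality $\lambda(c_u^*)<\mu$ through formula~\eqref{eqlambdac} and the bound $\sqrt{y}>y$ on $(0,1)$. Your additional explicit remarks on the leftmost constant region and the first corner at $x=ct-\tau$ are correct and simply make explicit what the paper leaves implicit.
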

We refer to Figure~\ref{fig:SupSol} for an illustration.

Choosing $C$ large enough so that $u_\tau (0,\cdot) \geq u_0$, we find that
$$\forall c > c_u^*, \quad \limsup_{t \to +\infty} \ \sup_{x \geq ct}u (t,x) =0.$$
In other words, the solution of \eqref{eq:main} with compactly supported initial datum spreads at speed less than or equal to $c_u^*$.

\begin{figure}[!t]
\centering
\includegraphics[width=0.49\textwidth]{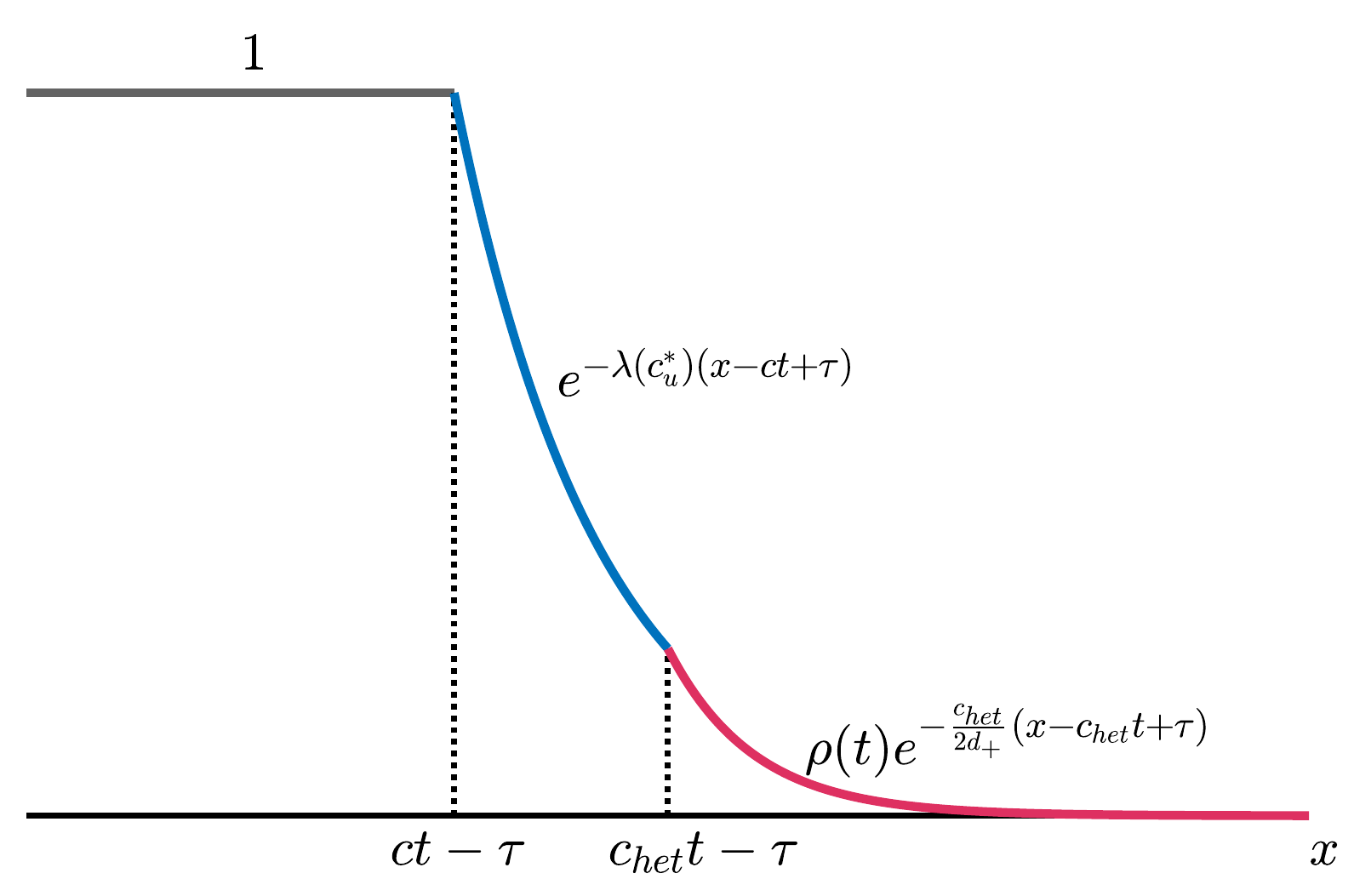}
\caption{Sketch of the super-solution $u_\tau(t,x)$ given in Lemma~\ref{lem:supsolanom} with $C=1$ which is composed of three parts: it is constant and equal to $1$ for $x\leq ct-\tau$ (gray curve), and then it is the concatenation of two exponentials (blue and pink curves) for $x\geq ct-\tau$ which are glued at $x=c_{het}t-\tau$. Note that the factor $\rho(t)$ is 
to ensure continuity between the two exponentials.}
\label{fig:SupSol}
\end{figure}

\subsubsection{Sub-solution}

Here we assume that $c_{het} \in [c_+,c_{int})$. We let $c\in(c_-,c_+)$ be such that $c_-<c<c_u^*=g^{-1}(0)\leq c_+$. We also let $\epsilon >0$ (to be made arbitrarily small) and $\eta>0$ be such that
\bqs
(\alpha-\epsilon)u <\alpha u(1-u), \quad \text{ for } u\in[0,\eta].
\eqs
We are going to construct a suitable sub-solution of
\begin{equation}\label{eq:Msub}
\partial_t u -\chi(x-c_{het}t)\partial_x^2 u -(\alpha-\epsilon)u \leq 0, \quad t \geq 0, \ x \in \R ,
\end{equation}
moving with speed~$c$. Provided that this sub-solution is smaller than $\eta$, then clearly it is also a sub-solution of \eqref{eq:main}. For convenience, we introduce the linear operator
\bqs M(u):=\partial_t u -\chi(x-c_{het}t)\partial_x^2 u -(\alpha-\epsilon)u.
\eqs
We give a first sub-solution of~\eqref{eq:Msub}, which has compact support to the left, and that writes
\bqs
\underline{u}_{1,\tau}(t,x)=\max\left\{0,e^{-\lambda (x-ct+\tau)}- e^{-(\lambda+\gamma)(x-ct+\tau)}\right\},
\eqs
where $\lambda$, $\gamma$ and $\tau$ are positive constants which we adjust below.
\begin{itemize}
\item First, we select $\lambda$ as a root to the equation
\bqs
\lambda^2d_--\lambda c +\alpha-\epsilon =0, \quad  \lambda=\frac{c-\sqrt{c^2-4d_-(\alpha-\epsilon)}}{2d_-}>0.
\eqs 
By our choice of $\lambda$ we will always have 
\bqs M\left(e^{-\lambda(x-c t +\tau)}\right) = \lambda^2 (d_--\chi(x-c_{het}t))e^{-\lambda(x-ct+\tau)} \leq 0 .\eqs


\item Next we pick $\gamma$ such that
$$0< d_- \gamma< c - 2 \lambda d_- = \sqrt{c^2-4d_-(\alpha-\epsilon)}.$$
Then, on the support of $\underline{u}_{1,\tau}$ we have
\bqs
 M(\underline{u}_{1,\tau}(t,x))=\partial_t \underline{u}_{1,\tau}(t,x) -\chi(x-c_{het}t)\partial_x^2 \underline{u}_{1,\tau}(t,x) -(\alpha-\epsilon)\underline{u}_{1,\tau}(t,x),
\eqs
and by linearity we get
\bqs
M(\underline{u}_{1,\tau}(t,x))=M(e^{-\lambda (x-ct+\tau)})- M(e^{-(\lambda+\gamma)(x-ct+\tau)}) \leq - M(e^{-(\lambda+\gamma)(x-ct+\tau)}).
\eqs
Let us also note that 
\bqs
M\left(e^{-(\lambda+\gamma)(x-ct+\tau)}\right)=\left( (\lambda+\gamma)c-(\lambda+\gamma)^2\chi(x-c_{het}t)-(\alpha-\epsilon)\right)e^{-(\lambda+\gamma)(x-ct+\tau)},
\eqs
and using the fact that $\lambda>0$ is such that $\lambda^2d_--\lambda c +\alpha-\epsilon =0$, we can simplify the above expression to
\bqs
M\left(e^{-(\lambda+\gamma)(x-ct+\tau)}\right)=\left( \gamma \left[c-2\lambda d_- - \gamma d_-\right]+(d_--\chi(x-c_{het}t))(\lambda+\gamma)^2\right)e^{-(\lambda+\gamma)(x-ct+\tau)}.
\eqs
From our choice of~$\gamma$, one can find $x_*\in\R$ such that for $x-c_{het}t \leq x_*$ we have
\bqs
\chi(x-c_{het}t)-d_-\leq \frac{\gamma \left[c-2\lambda d_- -\gamma d_- \right]}{(\lambda+\gamma)^2}.
\eqs
As a consequence, for $x-c_{het}t \leq x_*$, we have $M(e^{-(\lambda+\gamma)(x-ct-\tau)})\geq 0$ and so
\bqs
 M(\underline{u}_{1,\tau}(t,x)) \leq 0
\eqs
on $x-c_{het}t \leq x_*$.
\item We now assume that $x-c_{het}t \geq x_*$. From the previous computations we obtain that
\bqs
M (\underline{u}_{1,\tau}(t,x))\leq (d_--\chi(x-c_{het}t))\left( \lambda^2-  (\lambda+\gamma)^2 e^{-\gamma (x-ct+\tau) } \right)e^{-\lambda(x-ct+\tau)}.
\eqs
Recall that $c  < c_+ \leq c_{het}$. It follows that there exists $\tau_*>0$ such that for all $\tau\geq \tau_*$ we have
\bqs
\frac{\lambda^2}{ (\lambda+\gamma)^2} > e^{-\gamma (x-ct+\tau)}, \quad x-c_{het}t\geq x_* ,
\eqs
hence $M (\underline{u}_{1,\tau} (t,x)) < 0$.
\end{itemize}
As a conclusion, we have obtained the following lemma.

\begin{lem}\label{lem:sub_pull1}
Let $c\in(c_-,c_+)$ be such that $c_-<c<c_u^*=g^{-1}(0) \leq c_+$. Then there exists $\tau_* >0$ such that
\bqs
\underline{u}_{1,\tau}(t,x)=\max\left\{0,e^{-\lambda (x-ct+\tau)}- e^{-(\lambda+\gamma)(x-ct+\tau)}\right\},
\eqs
is a sub-solution of~\eqref{eq:Msub} for all $\tau\geq \tau_*$, with 
$$\lambda = \lambda_\epsilon(c):=\frac{c-\sqrt{c^2-4d_-(\alpha-\epsilon)}}{2d_-}>0,$$
and $0< \gamma = \gamma_\epsilon(c)<\frac{1}{d_-} \sqrt{c^2-4d_-(\alpha-\epsilon)}$ for some $\epsilon>0$ small enough.
\end{lem}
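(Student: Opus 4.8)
The plan is to check directly that $\phi(z) := e^{-\lambda z} - e^{-(\lambda+\gamma)z}$, with $z := x - ct + \tau$, is a sub-solution of \eqref{eq:Msub} on the set $\{z > 0\}$ where it is positive; since $\underline{u}_{1,\tau} = \max\{0,\phi\}$ is then the maximum of the two sub-solutions $0$ and $\phi$ (note $M(0) = 0$), it is automatically a generalized sub-solution, the only corner occurring at $z = 0$ where $\phi$ crosses zero from below with slope $\phi'(0) = \gamma > 0$. First I would record that $\phi(z) > 0$ exactly for $z > 0$, so the support is the moving half-line $\{x \ge ct - \tau\}$ with no right truncation, and that on the interior the linearity of $M$ gives $M(\phi) = M(e^{-\lambda z}) - M(e^{-(\lambda+\gamma)z})$, reducing the problem to two explicit exponentials.

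The key algebraic step is the choice of exponents. I would take $\lambda = \lambda_\epsilon(c)$ to be the smaller root of $d_-\lambda^2 - c\lambda + (\alpha-\epsilon) = 0$, so that $e^{-\lambda z}$ solves the constant-coefficient equation with diffusivity $d_-$; since $\chi \ge d_-$ under \eqref{ass:chi_inc}, this yields $M(e^{-\lambda z}) = \lambda^2(d_- - \chi)e^{-\lambda z} \le 0$, i.e.\ the leading term already has the right sign. Using the $\lambda$-relation to simplify the second term collapses it to $M(e^{-(\lambda+\gamma)z}) = (\gamma[c - 2\lambda d_- - \gamma d_-] + (d_- - \chi)(\lambda+\gamma)^2)e^{-(\lambda+\gamma)z}$. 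I would then pick $\gamma = \gamma_\epsilon(c)$ small enough that $0 < d_-\gamma < c - 2\lambda d_- = \sqrt{c^2 - 4d_-(\alpha-\epsilon)}$, which makes the bracketed constant $\gamma[c - 2\lambda d_- - \gamma d_-]$ strictly positive.

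To control the remaining sign-indefinite factor $(d_- - \chi)$, I would split according to the heterogeneity variable $\xi := x - c_{het}t$. Because $\chi(\xi) \to d_-$ as $\xi \to -\infty$ under \eqref{ass:chi_inc}, there is a threshold $x_*$ such that for $\xi \le x_*$ one has $\chi - d_- \le \gamma[c - 2\lambda d_- - \gamma d_-]/(\lambda+\gamma)^2$; then $M(e^{-(\lambda+\gamma)z}) \ge 0$, and combined with $M(e^{-\lambda z}) \le 0$ this gives $M(\phi) \le M(e^{-\lambda z}) \le 0$. For $\xi \ge x_*$ I would instead regroup, dropping the manifestly nonpositive middle term, to reach $M(\phi) \le (d_- - \chi)(\lambda^2 - (\lambda+\gamma)^2 e^{-\gamma z})e^{-\lambda z}$; since $d_- - \chi \le 0$, it then suffices that $\lambda^2 \ge (\lambda+\gamma)^2 e^{-\gamma z}$, that is, that $z$ be bounded below by a fixed constant.

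The main obstacle is to make this last lower bound on $z$ hold uniformly in time. On the region $\xi \ge x_*$ one has $z = \xi + (c_{het} - c)t + \tau \ge x_* + \tau$, where the crucial inequality uses $(c_{het} - c)t \ge 0$, valid precisely because $c < c_+ \le c_{het}$ in the subcase treated here. Hence choosing $\tau \ge \tau_*$ with $\tau_*$ so large that $e^{-\gamma(x_* + \tau_*)} \le \lambda^2/(\lambda+\gamma)^2$ closes the estimate for all $t \ge 0$ and all admissible $x$, and the lemma follows. Thus the strict ordering $c < c_{het}$ is exactly the structural hypothesis that supplies a uniform-in-time lower bound on $z$ and makes $\underline{u}_{1,\tau}$ a valid sub-solution in the frame moving at speed $c$.
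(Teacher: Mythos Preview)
Your proof is correct and follows essentially the same approach as the paper: the same choice of $\lambda$ as the small root of $d_-\lambda^2 - c\lambda + (\alpha-\epsilon)=0$, the same constraint $0<d_-\gamma<\sqrt{c^2-4d_-(\alpha-\epsilon)}$, the same split at a threshold $x_*$ in the heterogeneity variable $\xi=x-c_{het}t$, and the same use of $c<c_{het}$ to get the uniform lower bound $z\ge x_*+\tau$ on the far region. The only cosmetic addition is your explicit remark on the corner at $z=0$ via the max-of-sub-solutions principle, which the paper leaves implicit.
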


We now use the above sub-solution to construct another one which will  be compactly supported. Up to reducing $\epsilon>0$ we assume that $\alpha-\frac{c_{het}^2}{4d_-}<\lambda_\star-\epsilon<\lambda_\star=\alpha-\frac{c_{het}^2}{4d_+}$ and denote $\varphi_{\lambda_\star-\epsilon}$ the solution to 
\bqs
\mathcal{L}\varphi=(\lambda_\star-\epsilon)\varphi, \quad \text{ with } \quad \cL := \chi(x) \partial_x^2  +c_{het} \partial_x  +\alpha,
\eqs 
with prescribed asymptotic expansion
\bqs
\varphi_{\lambda_\star-\epsilon} (x) = e^{\left(-\frac{c_{het}}{2d_-}+\frac{1}{2d_-}\sqrt{c_{het}^2-4d_-\alpha+4d_-(\lambda_\star-\epsilon)}\right)x}\left(1+\mathcal{O}\left(e^{\nu' x}\right)\right), \text{ as } x\rightarrow-\infty,
\eqs 
for some $0 < \nu ' < \nu$, and with damped oscillations at $+\infty$ due to $c_{het}^2 < 4d_+ (\alpha - \lambda_\star +\varepsilon)$. 
We cut-off $\varphi_{\lambda_\star-\epsilon}$ to the right at the smallest point $x_\epsilon\in\R$ where it vanishes and denote
\bqs
\widetilde{\varphi}_{\lambda_\star-\epsilon}(x)=\left\{
\begin{array}{cl}
\varphi_{\lambda_\star-\epsilon}(x), & x\leq x_\epsilon,\\
0, & x> x_\epsilon.
\end{array}
\right.
\eqs

We define, for any $\tau > - 2 x_\epsilon$, the following sub-solution
\bqs
\underline{u}_{2,\tau}(t,x)=\left\{
\begin{array}{cl}
\underline{u}_{1,\tau}(t,x), & x-c_{het}t \leq -\tau/2,\\
c_{\tau}\underline{u}_{1,\tau}(t,c_{het}t-\tau/2)\widetilde{\varphi}_{\lambda_\star-\epsilon}(x-c_{het}t), & x-c_{het}t > -\tau/2,
\end{array}
\right.
\eqs
where $c_\tau = 1/\varphi_{\lambda_\star-\epsilon}(-\tau/2)>0$. With Lemma~\ref{lem:sub_pull1} and the definition of $\widetilde{\varphi}_{\lambda_\star-\epsilon}$, we only need to verify that $\underline{u}_{2,\tau}(t,x)$ is a sub-solution of~\eqref{eq:Msub} for $-\tau/2 \leq x-c_{het}t\leq x_\epsilon$, and also that the jump of the spatial derivative at $x- c_{het} t$ has the correct sign. 

For $- \tau/ 2 < x - c_{het} t < x_\epsilon$, we have
\begin{align*}
M (\underline{u}_{2,\tau}(t,x))& = -\lambda_\epsilon(c)(c_{het}-c)c_\tau e^{-\lambda_\epsilon(c)((c_{het}-c)t+\tau/2)}\varphi_{\lambda_\star-\epsilon}(x-c_{het}t)\\
&~~~+(\lambda_\epsilon(c)+\gamma_\epsilon(c))(c_{het}-c)c_\tau e^{-(\lambda_\epsilon(c)+\gamma_\epsilon(c))((c_{het}-c)t+\tau/2)}\varphi_{\lambda_\star-\epsilon}(x-c_{het}t)\\
&~~~-c_{het}c_{\tau}\underline{u}_{1,\tau}(t,c_{het}t-\tau)\varphi_{\lambda_\star-\epsilon}'(x-c_{het}t)\\
&~~~-\chi(x-c_{het}t)c_{\tau}\underline{u}_{1,\tau}(t,c_{het}t-\tau)\varphi_{\lambda_\star-\epsilon}''(x-c_{het}t)\\
&~~~ -(\alpha-\epsilon) c_{\tau}\underline{u}_{1,\tau}(t,c_{het}t-\tau)\varphi_{\lambda_\star-\epsilon}(x-c_{het}t).
\end{align*}
As $\cL \varphi = (\lambda_\star-\epsilon)\varphi$, we have $\cL \varphi -\epsilon \varphi= (\lambda_\star-2\epsilon)\varphi$ and
\begin{align*}
M (\underline{u}_{2,\tau}(t,x))& =-\lambda_\epsilon(c)(c_{het}-c)c_\tau e^{-\lambda_\epsilon(c)((c_{het}-c)t+\tau/2)}\varphi_{\lambda_\star-\epsilon}(x-c_{het}t)\\
&~~~+(\lambda_\epsilon(c)+\gamma_\epsilon(c))(c_{het}-c)c_\tau e^{-(\lambda_\epsilon(c)+\gamma_\epsilon(c))((c_{het}-c)t+\tau/2)}\varphi_{\lambda_\star-\epsilon}(x-c_{het}t)\\
&~~~-(\lambda_\star-2\epsilon)c_{\tau}\underline{u}_{1,\tau}(t,c_{het}t-\tau)\varphi_{\lambda_\star-\epsilon}(x-c_{het}t):=\mathcal{R}(t,x).
\end{align*}
Next, we recall that
\bqs
\lambda_\star=-\lambda(c_u^*)(c_{het}-c_u^*),
\eqs
such that the right-hand side of the previous inequality can be written as
\bqs
\mathcal{R}(t,x)=\mathcal{D}(t,x)c_\tau e^{-\lambda_\epsilon(c)((c_{het}-c)t+\tau/2)}\varphi_{\lambda_\star-\epsilon}(x-c_{het}t),
\eqs
with
\bqs
\mathcal{D}(t,x)=-\lambda_\epsilon(c)(c_{het}-c)+\lambda(c_u^*)(c_{het}-c_u^*)+2\epsilon + \left[(\lambda_\epsilon(c)+\gamma_\epsilon(c))(c_{het}-c)+\lambda_\star-2\epsilon\right]e^{-\gamma_\epsilon(c)((c_{het}-c)t+\tau/2)}.
\eqs
First, we can pick $\tau\geq \tau_\epsilon$ really large such that
\bqs
 \left[(\lambda_\epsilon(c)+\gamma_\epsilon(c))(c_{het}-c)+\lambda_\star-2\epsilon\right]e^{-\gamma_\epsilon(c)((c_{het}-c)t+\tau/2)}<\epsilon.
\eqs
Then we note that as $c<c_u^* \leq c_{het}$ we have $\lambda(c_u^*)<\lambda(c)$ and \bqs
-\lambda (c) (c_{het}-c)+\lambda(c_u^*)(c_{het}-c_u^*)<0;
\eqs
here we recall that $\lambda (c)$ denotes the smallest of the two solutions of $d_- \lambda^2  - c \lambda + \alpha = 0$. Similarly, $\lambda_\epsilon (c)$ denoted the smallest solution of the same equation with $\alpha$ replaced by $\alpha -\epsilon$. As a consequence, we can chose $\epsilon>0$ small enough such that
\bqs
-\lambda_\epsilon(c)(c_{het}-c)+\lambda(c_u^*)(c_{het}-c_u^*)+3\epsilon<0.
\eqs
It follows that $M (\underline{u}_{2,\tau} (t,x)) \leq 0$ for $-\tau/2 < x - c_{het} t < x_\epsilon$. It remains to deal with the jump of the spatial derivative at $x - c_{het} t$, which we adress after the statement for the resulting sub-solution.
\begin{figure}[!t]
\centering
\includegraphics[width=0.49\textwidth]{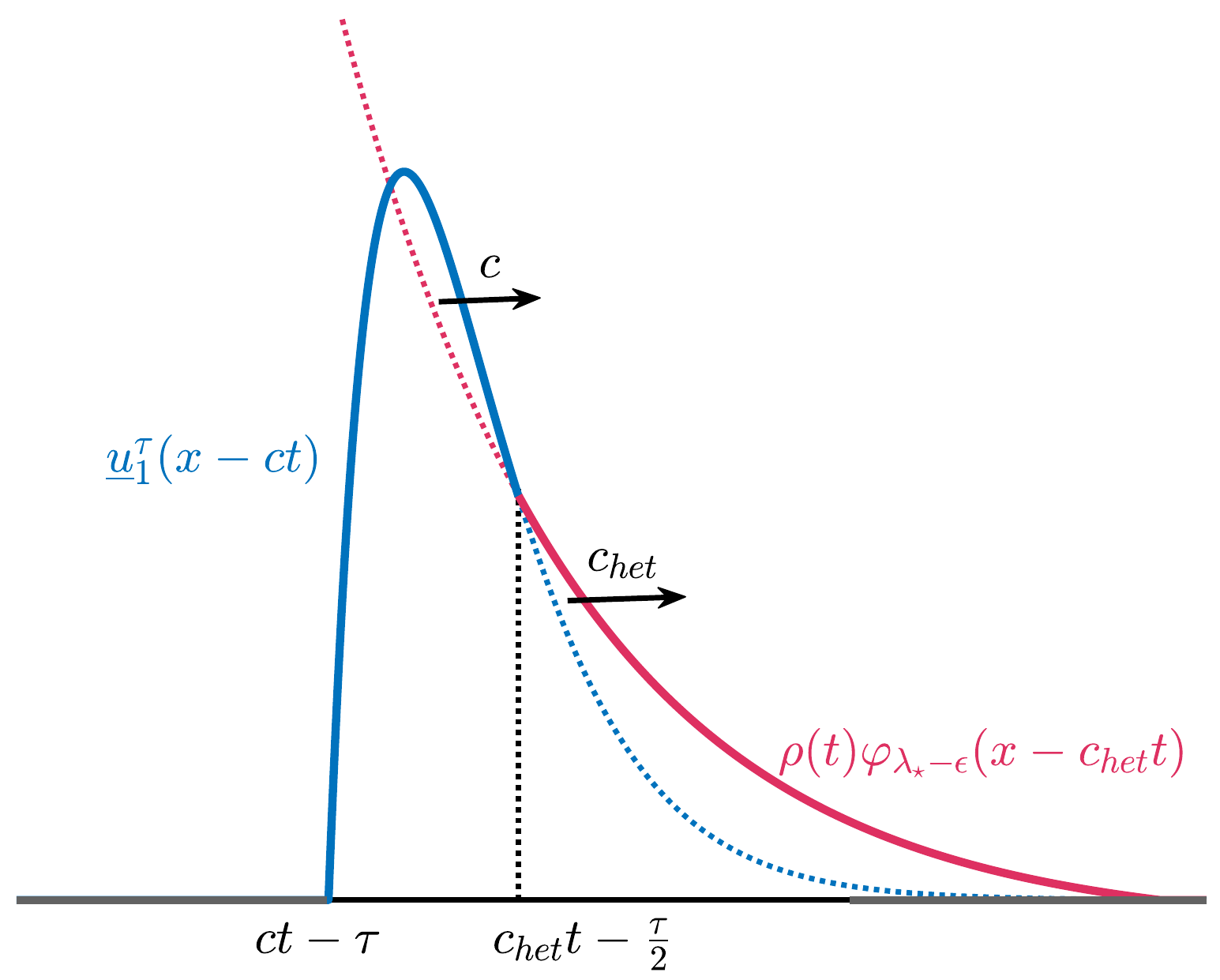}
\caption{Sketch of the sub-solution given in Proposition~\ref{prop:Msub} which is the concatenation of the sub-solution $\underline{u}_1^\tau(x-ct)$ given in Lemma~\ref{lem:sub_pull1} (composed of the difference of two exponentials) and the function ${\varphi}_{\lambda_\star-\epsilon}$ which solves $\cL \varphi = (\lambda_\star-\epsilon)\varphi$ with prescribed asymptotic behavior at $-\infty$. Note that the factor $\rho(t)$ is to ensure continuity at the matching point $x=c_{het}t-\tau/2$.}
\label{fig:SubSolAnom}
\end{figure}

\begin{prop}\label{prop:Msub}
Let $c\in(c_-,c_+)$ be such that $c_-<c<c_u^*=g^{-1}(0)\leq c_+$. Then, there is $\epsilon_0(c )>0$ and $\tau_0(\epsilon,c )>0$ such that for all $0<\epsilon<\epsilon_0(c)$ and all $\tau\geq \tau_0(\epsilon,c)$, we have that
\bqs
\underline{u}_{c,\epsilon,\tau}(t,x)=\left\{
\begin{array}{cl}
\underline{v}_{c,\epsilon,\tau}(t,x), & x-c_{het}t \leq -\tau/2,\\
c_{\epsilon,\tau}\underline{v}_{c,\epsilon,\tau}(t,c_{het}t-\tau/2)\widetilde{\varphi}_{\lambda_\star-\epsilon}(x-c_{het}t), & x-c_{het}t > -\tau/2,
\end{array}
\right.
\eqs
is a sub-solution of \eqref{eq:Msub} for all $(t,x)\in\R^+\times\R$. Here, we have set
\bqs
\underline{v}_{c,\epsilon,\tau}(t,x)=\max\left\{0,e^{-\lambda_\epsilon(c) (x-ct+\tau)}- e^{-(\lambda_\epsilon(c)+\gamma_\epsilon(c))(x-ct+\tau)}\right\},
\eqs
with $\lambda_\epsilon(c)=\frac{c-\sqrt{c^2-4d_-(\alpha-\epsilon)}}{2d_-}>0$ and $0<\gamma_\epsilon(c)< \frac{1}{d_-} \sqrt{c^2-4d_-(\alpha-\epsilon)}$. Moreover, the function $\widetilde{\varphi}_{\lambda_\star-\epsilon}(x-c_{het}t)$ is nonnegative and defined from ${\varphi}_{\lambda_\star-\epsilon}$ which solves $\cL \varphi = (\lambda_\star-\epsilon)\varphi$ with prescribed asymptotic behavior at $-\infty$:
\bqs
\varphi_{\lambda_\star-\epsilon} (x) = e^{\left(-\frac{c_{het}}{2d_-}+\frac{1}{2d_-}\sqrt{c_{het}^2-4d_-\alpha+4d_-(\lambda_\star-\epsilon)}\right)x}\left(1+\mathcal{O}\left(e^{\nu' x}\right)\right), \text{ as } x\rightarrow-\infty,
\eqs
for some $0<\nu'<\nu$. Finally, the normalizing constant $c_{\epsilon,\tau}>0$ is given by $c_{\epsilon,\tau}=1/\varphi_{\lambda_\star-\epsilon}(-\tau/2)$. 
\end{prop}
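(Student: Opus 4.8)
The plan is to show that $\underline{u}_{c,\epsilon,\tau}$ is a generalized (weak) sub-solution of \eqref{eq:Msub} by verifying the differential inequality $M(\underline{u}_{c,\epsilon,\tau})\le 0$ on each region of smoothness and then controlling the two corners where the pieces are glued. Since $M$ is linear, the amplitude of each piece is irrelevant and only the signs matter. On $\{x-c_{het}t\le -\tau/2\}$ the function coincides with $\underline{v}_{c,\epsilon,\tau}=\underline{u}_{1,\tau}$, which is already a sub-solution by Lemma~\ref{lem:sub_pull1} (this also absorbs the interior corner of $\underline{u}_{1,\tau}$ at its left support edge, which lies strictly to the left of the matching line). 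On $\{-\tau/2<x-c_{het}t<x_\epsilon\}$ the computation preceding the statement, based on $\cL\varphi_{\lambda_\star-\epsilon}=(\lambda_\star-\epsilon)\varphi_{\lambda_\star-\epsilon}$ and the identity $\lambda_\star=-\lambda(c_u^*)(c_{het}-c_u^*)$, reduces $M(\underline{u}_{c,\epsilon,\tau})$ to a factor $\mathcal{D}(t,x)$ that is negative once $\tau$ is large and $\epsilon$ is small; and on $\{x-c_{het}t\ge x_\epsilon\}$ the function vanishes, so $M(0)=0$. Continuity across $x-c_{het}t=-\tau/2$ is guaranteed by the normalization $c_{\epsilon,\tau}=1/\varphi_{\lambda_\star-\epsilon}(-\tau/2)$.

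It then remains to check that at each corner the jump of the spatial derivative is compatible with a sub-solution. A continuous, piecewise-$\mathscr{C}^2$ function $w$ with a corner at $x_0$ satisfies $M(w)\le 0$ distributionally provided the classical inequality holds on both sides and, since $\chi>0$, the right derivative dominates the left one, namely $\partial_x w(x_0^+)\ge\partial_x w(x_0^-)$ (this is the mirror image of the sign imposed in Lemma~\ref{lem:supsolanom} for the super-solution). At the cut-off point $x-c_{het}t=x_\epsilon$ this is immediate: the left derivative is proportional to $\varphi_{\lambda_\star-\epsilon}'(x_\epsilon)<0$, as $x_\epsilon$ is the first zero of $\varphi_{\lambda_\star-\epsilon}$ reached from above, while the right derivative is $0$. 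The delicate corner, and the true core of the argument, is the matching point $x=c_{het}t-\tau/2$.

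To handle it I would introduce $s:=(c_{het}-c)t+\tau/2>0$ and $P:=\varphi_{\lambda_\star-\epsilon}'(-\tau/2)/\varphi_{\lambda_\star-\epsilon}(-\tau/2)$. Computing the two one-sided derivatives, using $c_{\epsilon,\tau}=1/\varphi_{\lambda_\star-\epsilon}(-\tau/2)$ and $\underline{v}_{c,\epsilon,\tau}(t,c_{het}t-\tau/2)=e^{-\lambda_\epsilon(c)s}-e^{-(\lambda_\epsilon(c)+\gamma_\epsilon(c))s}$, the required inequality $\partial_x\underline{u}(x^+)\ge\partial_x\underline{u}(x^-)$ becomes, after dividing by $e^{-\lambda_\epsilon(c)s}>0$,
\[
P+\lambda_\epsilon(c)\ \ge\ \bigl(P+\lambda_\epsilon(c)+\gamma_\epsilon(c)\bigr)\,e^{-\gamma_\epsilon(c)s}.
\]
Because $c_{het}-c>0$, the quantity $s$ is minimal at $t=0$ with $s=\tau/2$, so it suffices to check this at $t=0$ with $e^{-\gamma_\epsilon(c)s}=e^{-\gamma_\epsilon(c)\tau/2}$. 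The exponential decay assumption \eqref{exp_decay} gives $\varphi_{\lambda_\star-\epsilon}(x)=e^{\Lambda_\epsilon x}(1+\mathcal{O}(e^{\nu'x}))$ as $x\to-\infty$, whence $P\to\Lambda_\epsilon:=\tfrac{1}{2d_-}\bigl(-c_{het}+\sqrt{c_{het}^2-4d_-\alpha+4d_-(\lambda_\star-\epsilon)}\bigr)$ as $\tau\to+\infty$. Comparing with \eqref{eqlambdac} one reads off $\Lambda_0=-\lambda(c_u^*)$, so $P+\lambda_\epsilon(c)\to\Lambda_\epsilon+\lambda_\epsilon(c)$, which for $\epsilon$ small is as close as desired to $\lambda(c)-\lambda(c_u^*)>0$; the strict positivity comes from $c<c_u^*$ and the fact that $\lambda(\cdot)$ is decreasing. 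Since the left-hand side thus stays bounded away from $0$ while $e^{-\gamma_\epsilon(c)\tau/2}\to 0$, the displayed inequality holds for all $\tau$ large enough, uniformly in $t\ge0$.

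The main obstacle is precisely this last corner: one must simultaneously control the logarithmic derivative $P$ of $\varphi_{\lambda_\star-\epsilon}$ through its prescribed behavior at $-\infty$ (which is exactly where \eqref{exp_decay} enters), identify its limit with $-\lambda(c_u^*)$ via \eqref{eqlambdac}, and exploit the sharp sign $\lambda(c)>\lambda(c_u^*)$ guaranteed by $c<c_u^*$, in order to absorb the $\mathcal{O}(e^{-\gamma_\epsilon(c)\tau/2})$ error. The order of quantifiers then matches the statement: fixing $c<c_u^*$ freezes the gap $\lambda(c)-\lambda(c_u^*)$, choosing $\epsilon<\epsilon_0(c)$ keeps all perturbed quantities close to their values at $\epsilon=0$, and finally $\tau\ge\tau_0(\epsilon,c)$ renders the exponential error negligible. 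Collecting the region-wise inequalities together with the two corner conditions yields $M(\underline{u}_{c,\epsilon,\tau})\le 0$ on $\R^+\times\R$, which is the claim.
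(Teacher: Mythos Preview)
Your proposal is correct and follows essentially the same route as the paper: verify $M\le 0$ region by region using Lemma~\ref{lem:sub_pull1} and the $\mathcal{D}(t,x)$ computation preceding the statement, then show the jump of $\partial_x$ at $x=c_{het}t-\tau/2$ has the sub-solution sign by comparing $\varphi_{\lambda_\star-\epsilon}'(-\tau/2)/\varphi_{\lambda_\star-\epsilon}(-\tau/2)\to -\lambda(c_u^*)$ with $-\lambda_\epsilon(c)$ via \eqref{eqlambdac} and the monotonicity of $\lambda(\cdot)$. Your reduction to the clean inequality $P+\lambda_\epsilon(c)\ge(P+\lambda_\epsilon(c)+\gamma_\epsilon(c))e^{-\gamma_\epsilon(c)s}$ and your explicit treatment of the corner at $x-c_{het}t=x_\epsilon$ are minor cosmetic improvements, but the argument is the paper's.
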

We refer to Figure~\ref{fig:SubSolAnom} for an illustration.

\begin{proof}
Let $c\in(c_-,c_+)$ be such that $c_-<c<c_u^*=g^{-1}(0) \leq c_+$. Then, there exists $\epsilon_0(c)>0$ such that for all $0<\epsilon<\epsilon_0(c)$, we have
\begin{align*}
&\alpha-\frac{c_{het}^2}{4d_-}<\lambda_\star-\epsilon<\lambda_\star,\\
&\lambda_\epsilon(c)=\frac{c-\sqrt{c^2-4d_-(\alpha-\epsilon)}}{2d_-}>0,\\
&-\lambda_\epsilon(c)(c_{het}-c)+\lambda(c_u^*)(c_{het}-c_u^*)+3\epsilon<0.
\end{align*}
Then, there exists $\tau_0(\epsilon,c)$ such that for all $\tau\geq \tau_0(\epsilon,c)$ one has
\begin{align*}
&\frac{\lambda_{\epsilon}(c)^2}{ (\lambda_{\epsilon}(c)+\gamma_{\epsilon}(c))^2} > e^{-\gamma_{\epsilon}(c) (x-ct+\tau)}, \quad x-c_{het}t\geq x_*(\epsilon,c),\\
&\left[(\lambda_\epsilon(c)+\gamma_\epsilon(c))(c_{het}-c)+\lambda_\star-2\epsilon\right]e^{-\gamma_\epsilon(c)((c_{het}-c)t+\tau/2)}<\epsilon,
\end{align*}
where $x_*(\epsilon,c)\in\R$ is defined such that 
\bqs
\chi(x-c_{het}t)-d_-\leq \frac{\gamma_\epsilon(c) \left[c-2\lambda_\epsilon(c) d_--\gamma_\epsilon(c)\right]}{(\lambda_\epsilon(c)+\gamma_\epsilon(c))^2}, \quad x-c_{het}t \leq x_*(\epsilon,c).
\eqs
According to the above discussions, we already have that $\underline{u}_{c,\epsilon,\tau}$ is a sub-solution on each subdomains $x - c_{het} t < -\tau/2$ and $x-c_{het} t > -\tau/2$. 

It remains to check that there is a positive jump in the spatial derivative of the sub-solution at $x=c_{het}t-\tau/2$, that is
\bqs
\partial_x \underline{u}_{c,\epsilon,\tau}(t,(c_{het}t-\tau/2)^-)<\partial_x \underline{u}_{c,\epsilon,\tau}(t,(c_{het}t-\tau/2)^+)<0.
\eqs
First, we compute:
\bqs
\partial_x \underline{u}_{c,\epsilon,\tau}(t,(c_{het}t-\tau/2)^-)=-\lambda_\epsilon(c) e^{-\lambda_\epsilon(c)\left[(c_{het}-c)t+\tau/2\right]}\left[1-\frac{\lambda_\epsilon(c)+\gamma_\epsilon(c)}{\lambda_\epsilon(c)}e^{-\gamma_\epsilon(c)\left[(c_{het}-c)t+\tau/2\right]}\right],
\eqs
and
\bqs
\partial_x \underline{u}_{c,\epsilon,\tau}(t,(c_{het}t-\tau/2)^+)=\frac{\varphi_{\lambda_\star-\epsilon}'(-\tau/2)}{\varphi_{\lambda_\star-\epsilon}(-\tau/2)}e^{-\lambda_\epsilon(c)\left[(c_{het}-c)t+\tau/2\right]}\left[1- e^{-\gamma_\epsilon(c)\left[(c_{het}-c)t+\tau/2\right]}\right].
\eqs
We now use the prescribed asymptotic behavior at $-\infty$ of $\varphi_{\lambda_\star-\epsilon}$ to get that
\bqs
\frac{\varphi_{\lambda_\star-\epsilon}'(-\tau/2)}{\varphi_{\lambda_\star-\epsilon}(-\tau/2)} \longrightarrow -\frac{c_{het}}{2d_-}+\frac{1}{2d_-}\sqrt{c_{het}^2-4d_-\alpha+4d_-(\lambda_\star-\epsilon)}, \quad \text{ as } \tau \rightarrow +\infty.
\eqs
Recall \eqref{eqlambdac}, that is
\bqs
\lambda(c_u^*)=\frac{c_{het}}{2d_-}-\frac{1}{2d_-}\sqrt{c_{het}^2-4d_-\alpha+4d_-\lambda_\star},
\eqs
and also that $\lambda (c) > \lambda (c_u^*)$ due to $c < c_u^*$. Thus
we can select $\epsilon>0$ even smaller to have
\bqs
-\lambda_\epsilon(c)< -\frac{c_{het}}{2d_-}+\frac{1}{2d_-}\sqrt{c_{het}^2-4d_-\alpha+4d_-(\lambda_\star-\epsilon)}.
\eqs
This implies that upon taking $\tau$ larger we can always ensure that
\bqs
\partial_x \underline{u}_{c,\epsilon,\tau,\eta}(t,(c_{het}t-\tau/2)^-)<\partial_x \underline{u}_{c,\epsilon,\tau,\eta}(t,(c_{het}t-\tau/2)^+)<0.
\eqs
This concludes the proof of Proposition~\ref{prop:Msub}.
\end{proof}

We are now in a position to prove that the solution~$u$ of \eqref{eq:main} with compactly supported initial datum spreads with speed larger than or equal to $c_u^*$. Take any $c \in (c_- ,c_u^*)$ arbitrarily close to $c_u^*$, and notice that $\underline{u}_{c,\epsilon, \tau}$ from Proposition~\ref{prop:Msub} is uniformly bounded from above since it is compactly supported and continuous. In particular, we can find $\delta_0 >0$ small enough so that, for any $0 < \delta \leq \delta_0$,
$$0 \leq \delta \underline{u}_{c,\epsilon,\tau}\leq \eta, $$
where $\eta$ is such that
\bqs
(\alpha-\epsilon)u <\alpha u(1-u), \quad \text{ for } u\in[0,\eta].
\eqs
It is then straightforward to check that, using the linearity of~$M$, 
$$N ( \delta \underline{u}_{c,\epsilon,\tau} ) < \delta  M ( \underline{u}_{c,\epsilon,\tau}) \leq 0,$$
i.e. $\delta \underline{u}_{c,\epsilon,\tau}$ is a sub-solution of~\eqref{eq:main} for any $\delta \leq \delta_0$. Proceeding as in the proof of Proposition~\ref{prop:gen}, we can infer that the solution spreads with speed larger than or equal to $c_u^*$. We omit the details and Theorem~\ref{theo:chi_inc} is proved in the case when $c_{het} \in [ c_+,c_{int})$.

\subsection{Case $c_{het} \geq c_{int}$}

When $c_{het} \geq c_{int}$, we need to prove that the rightward spreading speed is $c_u^*=c_-$. In this case, using the second statement of Proposition~\ref{prop:gen} we have already proved that the solution spreads with speed larger than or equal to $c_-$, and it only remains to construct a super-solution to conclude the proof of Theorem~\ref{theo:chi_dec} in that case. This is precisely the purpose of the next lemma.



\begin{lem} Assume that $c_-<c_+<c_{int} \leq c_{het}$. There exists $\tau_0>0$ such that for each $\tau\geq \tau_0$,
\bqs
u_\tau(t,x)=\left\{
\begin{array}{cl}
1, & x\leq c_-t+\tau,\\
e^{-\frac{c_-}{2d_-}(x-c_-t-\tau)}, & c_-t+\tau < x < c_{het}t+\tau,\\
e^{-\frac{c_-}{2d_-}(c_{het}-c_-)t}e^{-\mu_-(x-c_{het}t-\tau)}, & x \geq c_{het}t+\tau,
\end{array}
\right.
\eqs
is a super-solution of \eqref{eq:main}, with $\mu_-:=\frac{c_{het}+\sqrt{g(c_-)}}{2d_+}>0$.
\end{lem}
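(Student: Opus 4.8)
The plan is to verify directly that the piecewise function $u_\tau$ is a generalized super-solution of \eqref{eq:main}, following the same scheme already used for Lemma~\ref{lem:supsolanom}: with $N(u)=\partial_t u-\chi(x-c_{het}t)\partial_x^2u-\alpha u(1-u)$, I would show $N(u_\tau)\ge 0$ in the interior of each of the three smooth pieces, check that $u_\tau$ is continuous across the two interfaces $x=c_-t+\tau$ and $x=c_{het}t+\tau$, and confirm that the jump of $\partial_x u_\tau$ is nonpositive (a concave corner) at each interface. Granting this, for $\tau$ large one has $u_\tau(0,\cdot)\ge u_0$ because $u_0$ is compactly supported while $u_\tau(0,\cdot)\equiv 1$ on $(-\infty,\tau]$; the comparison principle, together with the fact that $u_\tau$ is nonincreasing in $x$ and $u_\tau(t,ct)=e^{-\frac{c_-}{2d_-}((c-c_-)t-\tau)}\to 0$ for every $c>c_-$, then yields $\limsup_{t\to\infty}\sup_{x\ge ct}u=0$; combined with the lower bound of Proposition~\ref{prop:gen} this gives $c^*_u=c_-$.

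Two of the three pieces, and both corners, I expect to be routine. On $x\le c_-t+\tau$ we have $u_\tau\equiv 1$ and $N(1)=0$. On the far piece $x\ge c_{het}t+\tau$ one has $x-c_{het}t\ge\tau>0$, so $\chi\le d_+$; writing $u_\tau=e^{-\frac{c_-}{2d_-}(c_{het}-c_-)t}e^{-\mu_-(x-c_{het}t-\tau)}$ and computing $N$, the linear coefficient equals $-\frac{c_-}{2d_-}(c_{het}-c_-)+\mu_-c_{het}-d_+\mu_-^2-\alpha$, which vanishes exactly because $\mu_-=\frac{c_{het}+\sqrt{g(c_-)}}{2d_+}$ is a root of $d_+\mu^2-c_{het}\mu+\alpha+\frac{c_-}{2d_-}(c_{het}-c_-)=0$ — real precisely since $c_{het}\ge c_{int}$ is equivalent to $g(c_-)\ge 0$. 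The leftover $(d_+-\chi)\mu_-^2u_\tau+\alpha u_\tau^2$ is then nonnegative because $\chi\le d_+$ everywhere under \eqref{ass:chi_inc}, so this piece is in fact a super-solution wherever it is defined. For the corners: at $x=c_-t+\tau$ the slope drops from $0$ to $-\frac{c_-}{2d_-}<0$, and at $x=c_{het}t+\tau$ I must check $\mu_-\ge\frac{c_-}{2d_-}$, which follows from $g(c_-)\ge 0$ and the short computation $\mu_-\ge\frac{c_{int}}{2d_+}=\frac{c_-}{2d_-}+\sqrt{\alpha\left(\tfrac1{d_-}-\tfrac1{d_+}\right)}>\frac{c_-}{2d_-}$; hence both corners are concave.

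The hard part will be the middle piece $c_-t+\tau<x<c_{het}t+\tau$, where $u_\tau=e^{-\frac{c_-}{2d_-}(x-c_-t-\tau)}$. Using $c_-^2=4d_-\alpha$ one obtains the clean identity
\bqs
N(u_\tau)=\left(d_--\chi(x-c_{het}t)\right)\left(\tfrac{c_-}{2d_-}\right)^2 u_\tau+\alpha\,u_\tau^2 ,
\eqs
so that nonnegativity is the balance $\alpha u_\tau\ge(\chi-d_-)\left(\tfrac{c_-}{2d_-}\right)^2$. Since \eqref{ass:chi_inc} forces $\chi\ge d_-$, the first term is nonpositive and this inequality is not automatic. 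My plan here is to exploit the monotonicity of $\chi$ and the exponential decay assumption \eqref{exp_decay}, choosing $\tau\ge\tau_0$ large so that over the bulk of this piece $x-c_{het}t\ll 0$, where $\chi-d_-=\mathcal{O}(e^{-\nu|x-c_{het}t|})$ is negligible and is absorbed by the quadratic buffer $\alpha u_\tau^2$, while the portion close to the heterogeneity front — where $\chi$ genuinely departs from $d_-$ — is controlled by its gluing, through the concave corner, to the fast $d_+$-tuned tail of the far piece. Making this estimate uniform in $t$ is the crux of the lemma and the step I expect to demand by far the most care, because exactly there $u_\tau$ is small while $\chi-d_-$ need not be; it is at this point that the largeness of $\tau_0$ and the specific value $\mu_-$ built from $g(c_-)$ must be used in tandem.

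Finally I would assemble the three interior estimates with the two corner inequalities to conclude that $u_\tau$ is a generalized super-solution for every $\tau\ge\tau_0$, and close the argument by the comparison scheme described above, exactly as in the proof of Proposition~\ref{prop:gen}.
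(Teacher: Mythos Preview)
Your treatment of the two outer pieces and of both corners is fine and essentially matches the paper. The gap is exactly where you flag it: on the middle strip $c_-t+\tau<x<c_{het}t+\tau$ your identity $N(u_\tau)=(d_--\chi)\bigl(\tfrac{c_-}{2d_-}\bigr)^{2}u_\tau+\alpha\,u_\tau^{2}$ is correct, but your plan to control it cannot succeed with the shift $+\tau$ as in the statement. Near the right edge one has $x-c_{het}t\approx\tau>0$, so $\chi(x-c_{het}t)\approx\chi(\tau)>d_-$ is bounded \emph{away} from $d_-$; assumption \eqref{exp_decay} gives nothing here since $|x-c_{het}t|$ is not large. Meanwhile $u_\tau=e^{-\frac{c_-}{2d_-}(c_{het}-c_-)t}\to 0$ as $t\to\infty$, so $N(u_\tau)\sim(d_--\chi(\tau))\bigl(\tfrac{c_-}{2d_-}\bigr)^{2}u_\tau<0$ for $t$ large. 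The concave corner cannot rescue this: it only controls the jump of $\partial_x u_\tau$ at the interface, not the sign of $N$ in the interior of the middle piece. In fact the paper's own argument for this region consists of the single displayed bound $N(u_\tau)\geq\bigl[\tfrac{c_-^2}{4d_-^2}(d_--\chi(\tau))+\alpha\,e^{\frac{c_-}{2d_-}\tau}\bigr]u_\tau$, which would need $u_\tau\geq e^{\frac{c_-}{2d_-}\tau}>1$ and is therefore not correct as written either.

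The clean repair is to mimic Lemma~\ref{lem:supsolanom}: replace the shift $+\tau$ by $-\tau$, so that on the middle piece $x-c_{het}t\leq-\tau$ and hence $\chi(x-c_{het}t)\leq\chi(-\tau)\to d_-$, and replace the front speed $c_-$ by an arbitrary $c\in(c_-,c_{het})$ while keeping the decay rate $\lambda=\tfrac{c_-}{2d_-}$. Then on the middle piece
\[
N(u_\tau)\;\geq\;\Bigl[\lambda(c-c_-)+\lambda^{2}\bigl(d_--\chi(-\tau)\bigr)\Bigr]u_\tau,
\]
and the strictly positive buffer $\lambda(c-c_-)$ absorbs the $o(1)$ error for $\tau$ large, uniformly in $t$. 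The far piece (with the same $\mu_-=\tfrac{c_{het}+\sqrt{g(c_-)}}{2d_+}$, which now contributes an additional nonnegative term $\lambda(c-c_-)u_\tau$) and the corner inequality $\mu_->\tfrac{c_-}{2d_-}$ go through unchanged. Since $c>c_-$ is arbitrary, this yields $c_u^*\leq c_-$, which is all that is needed.
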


\begin{proof}
As $c_{het} \geq c_{int}$, we have that 
$$g(c_-) = c_{het}^2 - 4 d_+ \left( \alpha + \frac{c_-}{2d_-} (c_{het} - c_- ) \right) \geq 0,$$ and $\mu_-$ is well-defined. For $x\in (c_-t+\tau,c_{het}t+\tau)$, we have
\bqs
N\left(e^{-\frac{c_-}{2d_-}(x-c_-t-\tau)}\right)\geq \left[ \frac{c_-^2}{4d_-^2}\left(d_--\chi(\tau)\right)+\alpha e^{\frac{c_-}{2d_-} \tau}\right]e^{-\frac{c_-}{2d_-}(x-c_-t-\tau)}.
\eqs
Thus, we fix $\tau_0>0$ such that for all $\tau\geq \tau_0$
\bqs
\frac{c_-^2}{4d_-^2}\left(d_--\chi(\tau)\right)+\alpha e^{\frac{c_-}{2d_-} \tau}>0.
\eqs
Next, for $x > c_{het}t+\tau$, we have that 
\bqs
N\left( u_\tau(t,x)\right)>\left(-\frac{c_-}{2d_-}(c_{het}-c_-)+\mu_-c_{het}-\alpha -d_+\mu_-^2\right)e^{-\frac{c_-}{2d_-}(c_{het}-c_-)t}e^{-\mu_-(x-c_{het}t-\tau)}=0,
\eqs
as $\mu_->0$ is the  largest  positive root of 
\bqs
-\frac{c_-}{2d_-}(c_{het}-c_-)+\mu c_{het}-\alpha -d_+\mu^2=0.
\eqs
Finally, we have that 
$$\mu_-  > \frac{c_-}{2d_-},$$
since
$$\mu_- \geq \frac{c_{het}}{2d_+} \geq \frac{c_{int}}{2d_+} >  \sqrt{\frac{\alpha}{d_-}} = \frac{c_-}{2d_-}.$$
This insures that the jump of the spatial derivative at $x= c_{het} t + \tau$ has the correct sign and $u_\tau$ is a super-solution of~\eqref{eq:main}. \end{proof}

\subsection{Case $c_{het} < c_+$}

When $c_{het} < c_+$, we need to prove that the rightward spreading speed is $c_u^*=c_+$. In this case, using the first statement of Proposition~\ref{prop:gen} we have already proved that the spreading speed is less than or equal to~$c_+$ and it only remains to construct a sub-solution to conclude the proof of Theorem~\ref{theo:chi_inc}.

We proceed as in the proof of Lemma~\ref{lemsubcmincpchet}. First we let $c_{het}<c<c_+$. Then, one can find $\epsilon>0$ such that $c<2\sqrt{d_+(\alpha-2\epsilon)}<c_+$ together with $\eta_\epsilon>0$ such that
\bqs
(\alpha-\epsilon)u\leq \alpha u(1-u), \quad u\in[0,\eta_\epsilon].
\eqs
We again define
\bqs
\beta_+(\epsilon,c):= \frac{\sqrt{4d_+(\alpha-\epsilon)-c^2}}{2d_+}>0, \text{ and } \beta_+(c):=\beta_+(0,c)= \frac{\sqrt{4d_+\alpha-c^2}}{2d_+}>0,
\eqs
together with the following family of functions
\bqs
u_{\tau,\epsilon}^c(t,x):=\left\{
\begin{array}{lc}
\delta_\epsilon \left[ e^{-\frac{c}{2d_+}(x-ct-\tau)}\cos(\beta_+(\epsilon,c)(x-ct-\tau)) +\epsilon \right], & x-ct-\tau \in\Omega_\epsilon(c), \\
0, & \text{ otherwise,}
\end{array}
\right.
\eqs
with $\Omega_\epsilon(c)=\left[-\frac{\pi}{2\beta_+(\epsilon,c)}-z^-_\epsilon(c),\frac{\pi}{2\beta_+(\epsilon,c)}+z^+_\epsilon(c)  \right]$, $\delta_\epsilon>0$ is fixed such that $0\leq u_{\tau,\epsilon}^c(t,x)\leq \eta_\epsilon$, and $z^\pm_\epsilon(c)$ are defined through
\bqs
e^{\mp \frac{c}{2d_+}\left( \frac{\pi}{2\beta_+(\epsilon,c)}+z^\pm_\epsilon(c)\right)}\sin(\beta_+(\epsilon,c)z^\pm_\epsilon(c))=\epsilon,
\eqs
with asymptotics
\bqs
z^\pm_\epsilon(c)=\frac{e^{\mp \dfrac{c\pi}{4d_+\beta_+(c)}}}{\beta_+(c)} \epsilon+o(\epsilon), \quad \text{ as } \epsilon \rightarrow0.
\eqs
Next, as $\chi(+\infty)=d_+$, there exists $A>0$ such that for all $\xi\geq A$, we get
\bqs
|\chi(\xi)-d_+|\leq \epsilon^2.
\eqs
Proceeding as for Lemma~\ref{lemsubcmincpchet}, we then find that for any $c$, this is a sub-solution provided that $\epsilon$ is small enough. As in the proof of Proposition~\ref{prop:gen}, one can deduce that the solution of \eqref{eq:main} spreads with speed larger than or equal to $c_+$, which ends the proof of Theorem~\ref{theo:chi_inc}. 

\section{Traveling fronts in case \eqref{ass:chi_dec}}\label{sec:TW}

Throughout this section we assume that $c_{het}\in(c_-,c_+)$. Before proceeding with the proof of Theorem~\ref{thmTF}, we introduce the following notion of generalized principal eigenvalue, which can be found in \cite{BR06,BHR07,BR15}, for the elliptic operator $\cL$ defined as
\bqs
\cL = \chi(x) \partial_x^2  +c_{het} \partial_x  +\alpha, \quad x\in\R.
\eqs 
We define $\mu^\star\in\R$ to be
\bqs
\mu^\star:=\sup\left\{ \mu ~|~ \exists \varphi \in \mathscr{C}^2(\R),\, \varphi>0, \, \left(\cL+\mu \right)\leq 0 \right\}.
\eqs
One of the key property of the generalized principal eigenvalue $\mu^\star$ is that it can be obtained as the limit of the Dirichlet principal eigenvalue. More precisely, consider the following Dirichlet problem:
\bqs
\left\{
\begin{split}
\cL \varphi &= -\mu \varphi, \quad |x|<r, \\
\varphi(\pm r)&=0 ,
\end{split}
\right.
\eqs
for each $r>0$ and denote $\mu_d(r)$ the principal eigenvalue given by Krein-Rutman theory \cite{KR}.  Then \cite[Proposition 4.2]{BHR07} ensures that $r\mapsto \mu_d(r)\in \R$ decreases and
\bqs
\mu_d(r)\longrightarrow \mu^\star, \text{ as } r\rightarrow+\infty.
\eqs
We claim that we have the following result.
\begin{lem}\label{lem:signgp}
When $c_{het}\in(c_-,c_+)$ and $\chi$ satisfies \eqref{ass:chi_dec}, then the principal eigenvalue of $\cL$ satisfies $\mu^\star<0$.
\end{lem}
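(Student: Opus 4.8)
The plan is to exploit the Dirichlet approximation recalled just above, namely that $\mu^\star=\lim_{r\to+\infty}\mu_d(r)$ with $r\mapsto\mu_d(r)$ nonincreasing, together with the general domain monotonicity of the Dirichlet principal eigenvalue. Since $\mu_d$ decreases as the domain grows, for \emph{any} bounded interval $I\subset(-r,r)$ one has $\mu_d(r)\le\mu_d(I)$, where $\mu_d(I)$ denotes the Dirichlet principal eigenvalue of $\cL$ on $I$. Consequently it suffices to exhibit a \emph{single} bounded interval $I$ on which $\mu_d(I)<0$: then $\mu^\star\le\mu_d(r)\le\mu_d(I)<0$ for every $r$ with $I\subset(-r,r)$.

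The interval $I$ will be chosen deep inside the left region, where $\chi$ is close to its maximal value $d_+$. Concretely, fix $L>0$ and set $I=(x_0-L/2,x_0+L/2)$. For the frozen, constant-coefficient operator $\cL_{d_+}:=d_+\partial_x^2+c_{het}\partial_x+\alpha$, the change of variable $\varphi=e^{-\frac{c_{het}}{2d_+}x}\psi$ symmetrizes the problem and yields the explicit Dirichlet principal eigenvalue on an interval of length $L$,
\[
\mu_d(I;d_+)=\frac{c_{het}^2}{4d_+}-\alpha+d_+\frac{\pi^2}{L^2}.
\]
Because $c_{het}<c_+=2\sqrt{d_+\alpha}$ we have $\frac{c_{het}^2}{4d_+}-\alpha<0$, so choosing $L$ large enough makes $\mu_d(I;d_+)$ strictly negative, uniformly in the position $x_0$.

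It remains to replace $d_+$ by the genuine coefficient $\chi$. Here I would invoke the continuity of the Dirichlet principal eigenvalue with respect to the second-order coefficient in $L^\infty$ on a fixed bounded domain (Krein--Rutman perturbation theory, in the spirit of \cite{BHR07}). Writing $\mu_d(I;\chi)=\mu_d(I_0;\chi(\cdot+x_0))$ with $I_0=(-L/2,L/2)$, by translation invariance of $c_{het}\partial_x+\alpha$, and using $\chi(-\infty)=d_+$ so that $\chi(\cdot+x_0)\to d_+$ in $L^\infty(I_0)$ as $x_0\to-\infty$, we obtain $\mu_d(I;\chi)\to\mu_d(I_0;d_+)<0$. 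Hence $\mu_d(I;\chi)<0$ for $x_0$ sufficiently negative, and the first paragraph yields $\mu^\star<0$.

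The main obstacle is precisely this last transfer step: $\cL$ is not self-adjoint, and a naive attempt to use the explicit eigenfunction of $\cL_{d_+}$ as a positive test function for $\cL$ fails, because the ratio $\psi''/\psi$ blows up near the endpoints of $I$ where $\psi$ vanishes, so the perturbation $(\chi-d_+)\psi''$ is not controlled pointwise by a small multiple of $\psi$. The clean way around this is to argue at the level of the eigenvalue through its continuous dependence on the $L^\infty$ coefficient on the fixed domain $I_0$, rather than through a pointwise supersolution inequality. Note that only $c_{het}<c_+$ and $\chi(-\infty)=d_+$ are used; the unfavorable behavior $\chi(+\infty)=d_-$, where a frozen operator would have a positive principal eigenvalue, is irrelevant, since domain monotonicity lets us localize entirely within the favorable left region.
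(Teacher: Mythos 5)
Your argument is correct, but it follows a genuinely different route from the paper. The paper disposes of this lemma in one paragraph by invoking \cite[Theorem 4.3]{BHR07}: it suffices to check that $q(x):=4\chi(x)\alpha-c_{het}^2$ stays above a positive constant on some large interval, which holds on a left half-line because $q(-\infty)=4d_+\alpha-c_{het}^2>0$ when $c_{het}<c_+$. What you have done is essentially reprove the relevant implication of that theorem in the present one-dimensional setting: the reduction via $\mu^\star\le\mu_d(r)\le\mu_d(I)$ (Dirichlet approximation from \cite[Proposition 4.2]{BHR07} plus domain monotonicity) to a single interval deep in the favorable region, the explicit frozen eigenvalue $\frac{c_{het}^2}{4d_+}-\alpha+\frac{d_+\pi^2}{L^2}$, whose negative part is exactly $-q(-\infty)/(4d_+)$, and a perturbation step. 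Your approach is more self-contained and makes the mechanism transparent; the paper's is shorter but opaque without opening \cite{BHR07}. Concerning the step you flag as the main obstacle: the $L^\infty$-continuity of $\mu_d(I_0;\cdot)$ in the diffusion coefficient is indeed the right tool, and in one dimension it is elementary rather than requiring non-self-adjoint perturbation theory. Writing $\beta'=c_{het}/\chi$ and multiplying the eigenvalue equation by $e^{\beta}/\chi$ puts $\cL$ in Sturm--Liouville form, so that
\[
\mu_d(I_0;\chi)=\inf_{u\in H^1_0(I_0)\setminus\{0\}}\ \frac{\displaystyle\int_{I_0} e^{\beta}\,|u'|^2\,\md x-\int_{I_0}\frac{\alpha}{\chi}\,e^{\beta}\,u^2\,\md x}{\displaystyle\int_{I_0}\frac{e^{\beta}}{\chi}\,u^2\,\md x},
\]
and since $d_-\le\chi\le d_+$ all three coefficients depend continuously in $L^\infty(I_0)$ on $\chi$, the infimum does too; alternatively, plugging $u(x)=\cos(\pi x/L)$ directly into this quotient gives the required negativity for $x_0$ far to the left without any limiting argument. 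So your proof is complete once this (standard) point is filled in.
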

\begin{proof}
We check that the conditions of \cite[Theorem 4.3]{BHR07} are satisfied in our case which translate into our setting by checking that the function $q(x):=4\chi(x)\alpha -c_{het}^2$ is above a fixed positive constant on some (large) interval. As $c_{het}\in(c_-,c_+)$, we have that $q(-\infty)=4d_+\alpha -c_{het}^2>0$. As a consequence, there exists $\epsilon>0$ and $x_0<0$ such that for all $x\leq x_0$, we have
\bqs
q(x)=4\chi(x)\alpha -c_{het}^2\geq\epsilon,
\eqs
which implies that $\mu^\star<0$ from \cite[Theorem 4.3]{BHR07}.
\end{proof}

\paragraph{Existence.} In order to prove the existence of traveling front solutions, we are first going to construct generalized sub and super-solutions for \eqref{TWeq}. The construction of the sub-solution relies on the aforementioned properties of the generalized principal eigenvalue $\mu^\star$. For each $r>0$, we denote by $\varphi_r$ the corresponding eigenfunction to the Dirichlet principal eigenvalue $\mu_d(r)$ which satisfies $\varphi_r>0$ and normalized with $\varphi_r(0)=1$. As $\mu^\star<0$ from Lemma~\ref{lem:signgp}, there exists some $R>0$ such that for any $r\geq R$ we also have $\mu_d(r)<0$. As a consequence, there exists $0<\kappa_r<1$ small enough such that
\bqs
(\alpha+\mu_d(r))(\kappa_r \varphi_r) \leq \alpha \kappa_r \varphi_r (1- \kappa_r \varphi_r).
\eqs

Thus, if one defines $\underline{U}_r$ the following family of functions
\bqs
\underline{U}_r(x) =
\left\{
\begin{array}{cl}
\kappa_r \varphi_r(x), & |x| \leq r, \\
0 , & \text{ otherwise,}
\end{array}
\right.
\eqs
then for all $r\geq R$, the function $\underline{U}_r$ is a generalized sub-solution to \eqref{TWeq}.

Next, as $c_{het}\in(c_-,c_+)$, there exists $\epsilon_0>0$ small enough such that for each $0<\epsilon<\epsilon_0$ one has $c_-<2\sqrt{(d_-+\epsilon)\alpha}<c_{het}$. For such an $\epsilon$, one can find $\tau_\epsilon>0$ such that for any $\tau\geq \tau_\epsilon$ we have
\bqs
d_-\leq \chi(x)\leq d_-+\epsilon, \quad x\geq \tau.
\eqs
We now introduce $\overline{U}_{\epsilon,\tau}$ defined as
\bqs
\overline{U}_{\epsilon,\tau}(x)=\max\left\{1,e^{-\lambda_\epsilon(x-\tau)}\right\}, \quad \lambda_\epsilon=\frac{c_{het}+\sqrt{c_{het}^2-4(d_-+\epsilon)\alpha}}{2(d_-+\epsilon)}>0.
\eqs
For all $\tau\geq \tau_\epsilon$, one can check that $\overline{U}_{\epsilon,\tau}$ is a generalized super-solution to \eqref{TWeq}. Up to further reducing~$\kappa_r$ (or taking~$\tau$ larger), we can always ensure that
\bqs
0\leq \underline{U}_r  \leq  \overline{U}_{\epsilon,\tau} \leq 1, 
\eqs
for some $r\geq R$, $\epsilon\in(0,\epsilon_0)$ and $\tau\geq \tau_\epsilon$.

Now denote by $u$ the solution of 
\begin{equation}\label{TWeq_parab}
\partial_t u = \chi (x) \partial_x^2 u  + c_{het} \partial_x u + \alpha u (1-u), 
\end{equation}
with the initial condition
$$u (t=0, \cdot) \equiv \overline{U}_{\epsilon,\tau}.$$
Since $\overline{U}_{\epsilon, \tau}$ is a super-solution of \eqref{TWeq}, hence also of~\eqref{TWeq_parab}, it follows from parabolic comparison principles that $u$ is nonincreasing in the time variable. Therefore it converges to some function $U$ as $t \to +\infty$, and by parabolic estimates we get that~$U$ is a solution of~\eqref{TWeq}.

Moreover, by construction and another use of the comparison principle, we get that 
\bqs
0\leq \underline{U}_r  \leq U \leq   \overline{U}_{\epsilon,\tau} \leq 1, \text{ on } \R.
\eqs
Using the strong maximum principle, we actually get that $0<U<1$. Indeed, assume there is $x_0\in\R$ for which $U(x_0)=0$, then the strong maximum principle implies that $0\leq \underline{U}_r  \leq U  \equiv 0$, which is impossible. A similar argument holds for the other inequality. 

We also remark that $U$ must be nonincreasing. Indeed, notice that 
$$\partial_x u (t=0) = \overline{U}_{\epsilon,\tau} '  \leq 0.$$
Moreover, derivating~\eqref{TWeq_parab}, we get that $v =\partial_x u$ solves
$$\partial_t v =  \chi (x) \partial_x^2 v + ( c_{het} + \chi ' (x) ) \partial_x v +  \alpha (1-2 u) v.$$
By another comparison principle, we find that 
$$ v (t,x) \leq 0,$$
for all $t >0$ and $x \in \R$, hence $U ' \leq 0$.
 
This latter fact combined with $0\leq \underline{U}_r  \leq U$ ensures that $ U (-\infty )>0$, and one necessarily gets that $U (-\infty)=1$ as $U$ is solution of the ODE \eqref{TWeq}. Next, using the fact that $\overline{U}_{\epsilon,\tau}(x)\rightarrow0$ as $x\rightarrow+\infty$, we get that $U (+\infty)=0$ by comparison. We claim that we actually have $U '<0$ on $\R$. This property is satisfied near $-\infty$ as there exists a unique stable direction. We let $x_*\in\R$ be such that $U '(x_*)=0$ and $U'(x)<0$ for all $x<x_*$. Then, we have
\bqs
\chi(x_*)U ''(x_*)=-\alpha U(x_*)(1- U(x_*))<0,
\eqs
which is a contradiction. 

\paragraph{Asymptotic behavior at $+\infty$.}  As $c_{het}>c_-$ we have that $\frac{\sqrt{c_{het}^2-4d_-\alpha}}{2d_-}>0$, and upon eventually reducing $\epsilon_0>0$, we can ensure that for all $\epsilon\in(0,\epsilon_0)$,
\bqs
\lambda_{\epsilon}>\frac{c_{het}}{2d_-}.
\eqs
As a consequence, we have that for all $x\geq \tau $
\bqs
0<e^{\frac{c_{het}}{2d_-}x} U(x)\leq  e^{\frac{c_{het}}{2d_-}x} \overline{U}_{\epsilon,\tau}(x)=e^{-\left(\lambda_\epsilon-\frac{c_{het}}{2d_-}\right) x+\lambda_\epsilon \tau}.
\eqs
We now prove that~$U$ has the strong exponential decay given in Theorem~\ref{thmTF}. Near $+\infty$, system~\eqref{TWeq} can be written in condensed form
\bqq\label{systemTW}
{\bf U} '(x) = A(x){\bf U}(x)+N(x,{\bf U}(x)), \quad x\in\R,
\eqq
with ${\bf U}(x)=(U(x),V(x))^\mathbf{t}$ and
\bqs
A(x):=\left(\begin{matrix}
0 & 1 \\
-\frac{\alpha}{\chi(x)} & -\frac{c_{het}}{\chi(x)}
\end{matrix}\right), \quad N(x,{\bf U}):=\left(\begin{matrix}
0  \\
\frac{\alpha}{\chi(x)} U^2
\end{matrix}\right).
\eqs

As $\chi$ converges at an exponential rate at $+ \infty$, so does $A(x)$, and since there is a gap between the strong stable and weak stable eigenvalues of $A_\infty:=\underset{x\rightarrow+\infty}{\lim}~A(x)$, the constant coefficient asymptotic system has an exponential dichotomy. By classical arguments on the roughness of exponential dichotomies \cite{coppel}, the non-autonomous system inherits one with the same decay rates as $\chi$ converges at an exponential rate at $+ \infty$. Note that the strong stable eigenvalue is precisely given by $-\lambda_s$ defined in Theorem~\ref{thmTF} while the weak stable eigenvalue is $-\lambda_w$ given by
\bqs
0<\lambda_w:=\frac{c_{het}-\sqrt{c_{het}^2-c_-^2}}{2d_-}<\lambda_s.
\eqs
As a consequence, since we have $0<e^{\frac{c_{het}}{2d_-}x} U(x)\leq  e^{-\left(\lambda_\epsilon-\frac{c_{het}}{2d_-}\right) x+\lambda_\epsilon \tau}$ for all $x\geq \tau$, we deduce that necessarily 
\bqs
U (x) \underset{x \rightarrow +\infty}{\sim} \gamma_s e^{-\lambda_s x},
\eqs
for some $\gamma_s>0$.

\paragraph{Uniqueness.} We first prove that when $c_{het}> c_- = 2\sqrt{d_-\alpha}$ solutions of \eqref{TWeq}-\eqref{TWlim} are unique in $H^1_{\frac{c_{het}}{2d_-}}(\R):=\left\{ U 
~|~ e^{\frac{c_{het}}{2d_-} \cdot} U \in H^1(\R)\right\}$. Let $U \in H^1_{\frac{c_{het}}{2d_-}}(\R)$ and $V \in H^1_{\frac{c_{het}}{2d_-}}(\R)$ be two solutions of \eqref{TWeq}-\eqref{TWlim} and assume by contradiction that $U \not\equiv V$. Without loss of generality, we may assume that $U(x)>V(x)$ on some interval $(a,b)\subset\R$ with $U(a)=V(a)$ and $U(b)=V(b)$. Note that $a,b\in \R \cup \left\{\pm\infty\right\}$. Multiplying the equation for $U$ with $e^{\beta(x)}\frac{V(x)}{\chi(x)}$ and the equation for $V$ with $e^{\beta(x)}\frac{U(x)}{\chi(x)}$, where we set $\beta(x):=\int_0^x \frac{c_{het}}{\chi(y)}\mathrm{d} y$, we obtain
\begin{equation*}
\left\{
\begin{split}
0&=V(x) \frac{\mathrm{d}}{\mathrm{d} x} \left( e^{\beta(x)}U'(x) \right)+\frac{\alpha}{\chi(x)}U(x)V(x)(1-U(x)) e^{\beta(x)},\\
0&=U(x) \frac{\mathrm{d}}{\mathrm{d} x} \left( e^{\beta(x)}V'(x)\right)+\frac{\alpha}{\chi(x)}U(x)V(x)(1-V(x)) e^{\beta(x)}.
\end{split}
\right.
\end{equation*}
As a consequence, we have
\begin{equation*}
\int_a^b V(x) \frac{\mathrm{d}}{\mathrm{d} x} \left( e^{\beta(x)}U'(x) \right)-U(x) \frac{\mathrm{d}}{\mathrm{d} x} \left( e^{\beta(x)}V'(x)\right)\mathrm{d} x=\alpha \int_a^b \frac{U(x)V(x)}{\chi(x)}(U(x)-V(x))e^{\beta(x)}\mathrm{d} x.
\end{equation*}
Note that the above integrals are convergent as on the one hand $U,V \in H^1_{\frac{c_{het}}{2d_-}}(\R)$ which ensures integrability when $b=+\infty$, and on the other hand $\beta(x)\sim \frac{c_{het}}{d_+}x$ as $x\rightarrow -\infty$ which ensures integrability when $a=-\infty$ (recall that $U(-\infty) = V (-\infty) =1$, and the convergence must be exponential by classical arguments on \eqref{systemTW}). Integrating by parts the integral on the left-hand side of the equality, we obtain
\begin{equation*}
\left.e^{\beta(x)}\left(V(x)U'(x)-U(x)V'(x)\right)\right|_a^b=\alpha \int_a^b \frac{U(x)V(x)}{\chi(x)}(U(x)-V(x))e^{\beta(x)}\mathrm{d} x.
\end{equation*}
When both $a,b\in\R$, we have that the right-hand side is strictly positive while the left-hand side is negative using that $U(x)=V(x)$ at $x\in\left\{a,b\right\}$ and $U(x)>V(x)$ for $x\in(a,b)$. If $a=-\infty$, we have that
\begin{equation*}
e^{\beta(x)}\left(V(x)U'(x)-U(x)V'(x)\right) \rightarrow 0, \quad x\rightarrow -\infty,
\end{equation*}
and the left-hand side is negative. If $b=+\infty$, we have that
\begin{align*}
e^{\beta(x)}\left|V(x)U'(x)\right|=e^{\int_0^{x} c_{het} \left(\frac{1}{\chi(y)}-\frac{1}{d_-}\right) \mathrm{d}y}e^{\frac{c_{het}}{d_-}x}\left|V(x)U'(x)\right| \rightarrow 0, \quad x\rightarrow +\infty,\\
e^{\beta(x)}\left|V'(x)U(x)\right|=e^{\int_0^{x} c_{het} \left(\frac{1}{\chi(y)}-\frac{1}{d_-}\right) \mathrm{d}y}e^{\frac{c_{het}}{d_-}x}\left|V'(x)U(x)\right| \rightarrow 0, \quad x\rightarrow +\infty,
\end{align*}
and the left-hand side is again negative. Thus, we have reached a contradiction, and the solution of \eqref{TWeq}-\eqref{TWlim} with strong exponential decay at $+\infty$, if it exists, is unique.

\paragraph{Non existence of solutions of \eqref{TWeq}-\eqref{TWlim} when $c_{het}< c_- $.}

Using the fact that $\chi(+\infty)=d_->0$, we readily obtain the necessary condition that $c_{het}\geq   2\sqrt{d_-\alpha} = c_-$ for the corresponding solution to remain positive near the equilibrium $u=0$. 

\paragraph{Non existence of solutions of \eqref{TWeq}-\eqref{TWlim} when $c_{het}> c_+ $ with strong exponential decay.} Let us assume that $c_{het}> c_+ = 2\sqrt{d_+\alpha}$. We are going to prove that any solution satisfies 
\begin{equation}
U'(x)>-\frac{c_{het}}{2d_+}U(x), \quad x\in\R.
\label{eqweak}
\end{equation}
We know that the above inequality holds true near $-\infty$, and suppose by contradiction that $x_*\in\R$ is the first time where
\begin{equation*}
\left\{
\begin{split}
0&=\chi(x_*) U''(x_*)+c_{het}U'(x_*)+\alpha U(x_*)(1-U(x_*)),\\
U'(x_*)&=-\frac{c_{het}}{2d_+}U(x_*).
\end{split}
\right.
\end{equation*}
Using the fact that $\alpha U (1-U) \leq \alpha U < \frac{c_{het}^2}{4d_+} U$, we have that
\begin{align*}
\chi(x_*) U''(x_*)&=-c_{het} U'(x_*)-\alpha U(x_*)(1-U(x_*))\\
&>-c_{het}U'(x_*)-\frac{c_{het}^2}{4d_+}U(x_*),\\
& =-\frac{c_{het}}{2}U'(x_*),
\end{align*}
from which we deduce that $U''(x_*)>-\frac{c_{het}}{2d_+}U'(x_*)$ as $\chi(x_*)\leq d_+$ and $U'(x_*)<0$. This is a contradiction as $x_*$ is the first time where $U'(x_*)=-\frac{c_{het}}{2d_+}U(x_*)$. Therefore \eqref{eqweak} must hold for all~$x$.  Integrating \eqref{eqweak} we obtain that
\begin{equation*}
U(x)>e^{-\frac{c_{het}}{2d_+}x}U(0), \quad x>0,
\end{equation*}
and thus the solution $U$, if it exists, has weak exponential decay near $+\infty$. As a consequence, as $0<\frac{c_{het}}{2d_+}<\frac{c_{het}}{2d_-}<\lambda_s$, when $c_{het}>2\sqrt{d_+\alpha}$ there cannot exist a solution of \eqref{TWeq}-\eqref{TWlim} with strong exponential decay given by $\lambda_s$.

\section*{Acknowledgements} GF acknowledges support from an ANITI (Artificial and Natural Intelligence Toulouse Institute) 
Research Chair and from Labex CIMI under grant agreement ANR-11-LABX-0040.  The research of MH was partially supported by the National Science
Foundation (DMS-2007759).

%
%
%
%

\bibliographystyle{abbrv}
\bibliography{hetdiff}

\end{document}